\documentclass[letterpaper,11pt]{article}
\usepackage[utf8]{inputenc}
\usepackage{authblk}
\usepackage{mathtools}

\renewcommand{\geq}{\geqslant}
\renewcommand{\leq}{\leqslant}

\usepackage{amssymb,amsthm,amscd,latexsym,mathrsfs,units,enumerate,bm,bbm,cancel,physics}
\usepackage[colorlinks]{hyperref}
\usepackage{graphicx,wrapfig}
\graphicspath{{./images/}}

\usepackage[margin=1in]{geometry}

\theoremstyle{plain}
\newtheorem{theorem}{Theorem}
\newtheorem{lemma}[theorem]{Lemma}

\newtheorem{corollary}[theorem]{Corollary}
\newtheorem*{thm*}{Theorem}

\theoremstyle{definition} 

\theoremstyle{remark}
\newtheorem*{remark*}{Remark}

\numberwithin{equation}{section}

\newtheorem*{prob*}{}



\newcommand{\R}{\mathbb{R}}

\DeclareMathOperator{\disc}{disc}

\DeclareMathOperator{\cn}{cn}

\DeclareMathOperator{\sn}{sn}

\title{Superharmonic instability for regularized long-wave models}

\author[1]{Jared~C.~Bronski\thanks{E-mail:~bronski@illinois.edu.}}
\author[1]{Vera~Mikyoung~Hur\thanks{E-mail:~verahur@math.uiuc.edu. VMH is supported by NSF DMS-2009981.}}
\author[1]{Samuel~Lee~Wester\thanks{E-mail:~swester3@illinois.edu}}
\affil[1]{Department of Mathematics, University of Illinois at Urbana-Champaign \protect\\ 
Urbana, IL 61801, USA}

\begin{document}

\maketitle

\begin{abstract}
We examine the spectral stability and instability of periodic traveling waves for regularized long-wave models. Examples include the regularized Boussinesq, Benney--Luke, and Benjamin--Bona--Mahony equations. Of particular interest is a striking new instability phenomenon---spectrum off the imaginary axis extending into infinity. The spectrum of the linearized operator of the generalized Korteweg--de Vries equation, for instance, lies along the imaginary axis outside a bounded set. The spectrum for a regularized long-wave model, by contrast, can vary markedly with the parameters of the periodic traveling waves. We carry out asymptotic spectral analysis to short wavelength perturbations, distinguishing whether the spectrum tends to infinity along the imaginary axis or some curve whose real part is nonzero. We conduct numerical experiments to corroborate our analytical findings. 
\end{abstract}


\section{Introduction}

The Korteweg--de Vries (KdV) equation~\cite{Bou,KdV}
\begin{equation}\label{eqn:KdV} 
u_t+u_x+u_{xxx}+(u^2)_x=0
\end{equation}
and the Boussinesq equation~\cite{Bou}
\begin{equation}\label{eqn:Bou}
u_{tt}-u_{xx}-u_{xxxx}-(u^2)_{xx}=0
\end{equation}
are basic mathematical models for small amplitude and relatively long waves in water, under the influence of gravity and possibly surface tension. Here $t\in\mathbb{R}$ is proportional to elapsed time, $x\in\mathbb{R}$ is related to the spatial variable in the direction of wave propagation, and $u(x,t)$ is real valued, describing the fluid surface or a velocity. 

There are many variants of \eqref{eqn:KdV} and \eqref{eqn:Bou}. Under the assumption that $u_t+u_x$ is of smaller order, Benjamin, Bona and Mahony~\cite{BBM} proposed 
\begin{equation*}\label{eqn:BBM}
u_t+u_x-u_{xxt}+(u^2)_x=0
\end{equation*}
as an alternative to \eqref{eqn:KdV}. Also under the assumption that $u_t\pm u_x$ are of smaller order, the so-called regularized Boussinesq equation\footnote{This does not appear explicitly in \cite{Bou}. But \cite[(280)]{Bou}, for instance, after several `higher order terms' drop out, becomes equivalent to what appears in \cite{Whitham}.} 
\begin{equation}\label{eqn1:rBou}
u_{tt}-u_{xx}-u_{xxtt}-(u^2)_{xx}=0
\end{equation}
is at least formally equivalent to \eqref{eqn:Bou} for long waves. Indeed, the dispersion relation of \eqref{eqn1:rBou}, 
\[
\omega^2(k)=\frac{k^2}{1+k^2}=k^2(1-k^2+O(k^4))\quad\text{as $|k|\to0$},
\]
agrees with the dispersion relation of \eqref{eqn:Bou} up to the order of $k^4$ when $|k|$ is small, but \eqref{eqn1:rBou} is preferable to \eqref{eqn:Bou} for short and intermediately long waves because the Cauchy problem for \eqref{eqn:Bou} is ill-posed. 

Another variant is the Benney--Luke equation~\cite{BL}
\begin{equation}\label{eqn1:BL}
u_{tt}-u_{xx}+au_{xxxx}-bu_{xxtt}+u_tu_{xx}+2u_xu_{xt}=0,
\end{equation}
where\footnote{$a=\frac16$ and $b=\frac12$ in \cite{BL}} $a,b \geq 0$ satisfy $a-b=\textrm{Bo}^{-1}-\frac13$ and $\textrm{Bo}^{-1}$ is the inverse of the Bond number, describing surface tension strength. The dispersion relation of \eqref{eqn1:BL} is
\[
\omega^2(k)=k^2\frac{1+ak^2}{1+bk^2}
\]
and remains bounded for all $k\in\mathbb{R}$, provided that $a=0$ and $b\neq0$.
Also Bona, Chen and Saut~\cite{BCS,BCS2} proposed a four parameter family of Boussinesq systems 
\begin{equation}\label{eqn1:BCS}
\begin{aligned}
&\eta_t+u_x+au_{xxx}-b\eta_{xxt}+(\eta u)_x=0, \\
&u_t+\eta_x+c\eta_{xxx}-du_{xxt}+uu_x=0, 
\end{aligned}
\end{equation}
where $a,b,c,d\geq0$ satisfy $a+b=\frac12(\theta^2-\frac13)$ and $c+d=\frac12(1-\theta^2)$ for some $\theta\in[0,1]$.
The dispersion relation of \eqref{eqn1:BCS} is 
\[
\omega^2(k)=k^2\frac{(1-ak^2)(1-ck^2)}{(1+bk^2)(1+dk^2)}
\]
and remains bounded for all $k\in\mathbb{R}$, provided that $a$ or $c=0$ while $b,d\neq0$.

For the generalized KdV equation 
\begin{equation}\label{eqn:gKdV}
u_t+u_x+u_{xxx}+(f(u))_x=0,\quad\text{$f$ is some nonlinearity}, 
\end{equation}
the spectrum of the linearized operator about a periodic traveling wave lies along the imaginary axis outside of a bounded set. 
See Appendix~\ref{appn} for a proof. 
This is in some sense implicit in many numerical studies of stability and instability. See \cite{BHJ,BJ,BJK}, among others, for analytical studies of stability and instability. We expect that the same holds true for \eqref{eqn1:BL} and \eqref{eqn1:BCS} so long as $a,c\neq0$. The proof in Appendix~\ref{appn} hinges on the fact that $|\omega(k+1)-\omega(k)|\to\infty$ as $|k|\to\infty$, where $\omega(k)=k^3-k$ is the dispersion relation of \eqref{eqn:gKdV}. 
For regularized long-wave models such as \eqref{eqn1:rBou}, and \eqref{eqn1:BL}, where $a=0$ while $b\neq0$, and \eqref{eqn1:BCS}, where $a$ or $c=0$ while $b,d\neq0$, by contrast, $|\omega(k+1)-\omega(k)|$ remains bounded or even vanishes as $|k|\to\infty$. 

Indeed, the asymptotics of the spectrum at infinity of the linearized operator of a regularized long-wave model can vary markedly with the parameters of the periodic traveling waves. Figure~\ref{FourthBandAndGapExample} provides examples for two periodic traveling waves of \eqref{eqn1:rBou}, for which only one of the parameters differs less than $4\%$. On the left, the spectrum lies along the imaginary axis outside of a bounded set. 
On the right, on the other hand, the spectrum tends towards infinity along the dashed lines whose real part is nonzero. Indeed, the periodic traveling wave is spectrally unstable to arbitrarily short wavelength perturbations, the opposite to modulational instability to arbitrarily long wavelength perturbations. The aim here is to explain such striking and new instability for \eqref{eqn1:rBou}, and \eqref{eqn1:BL}, where $a=0$ while $b\neq0$, and \eqref{eqn1:BCS}, where $a,c=0$ while $b,d\neq0$. Modulational instability will be addressed in a companion article \cite{BHW2}. 

In Section~\ref{sec:Boussinesq} we set forth asymptotic spectral analysis to short wavelength perturbations for \eqref{eqn1:rBou}, demonstrating that: if an associated Hill's differential equation (see \eqref{eqn2:Hill}) is elliptic (in a band), depending on the parameters of the periodic traveling wave, 
then the spectrum tends to infinity along the imaginary axis. If \eqref{eqn2:Hill} is hyperbolic (in a gap), on the other hand, then the spectrum tends to infinity along a line whose real part is nonzero. Actually, \eqref{eqn2:Hill} is a three-gap Lam\'e equation, whose band edges can be found in closed form, enabling us to classify the spectrum at infinity for all periodic traveling waves. We compute the spectrum numerically to corroborate the analytical predictions. 

In Section~\ref{sec:BL} we turn our attention to \eqref{eqn1:BL}, where $a=0$ and, for simplicity of notation, $b=1$. The associated Lam\'e equation (see \eqref{eqn3:Lame}) is no longer explicitly solvable. Nevertheless we can compute the band edges numerically, distinguishing whether the spectrum tends to infinity along the imaginary axis or some line whose real part is nonzero. 

Last but not least, in Section~\ref{sec:abcd}, we turn to \eqref{eqn1:BCS}, where $a=c=0$ and\footnote{for convenience and to better corresponds to earlier works \cite{CCN,CCD}} $b=d=\frac16$, and establish that: if the mean of $1+\eta$ over the period is negative, depending on the parameters of the periodic traveling wave, then the spectrum tends to infinity along some curve whose real part is $O(k^{-1})$ for  $|k|\gg1$, that is, short wavelength perturbations. Numerical results are in excellent agreement with analytical ones. 

\section{The regularized Boussinesq equation}\label{sec:Boussinesq}

We begin by rewriting \eqref{eqn1:rBou} as 
\[
u_t=(1-\partial_x^2)^{-1}v_x \quad\text{and}\quad v_t=(u+u^2)_x
\]
or, equivalently \cite{HP:BBM},
\begin{equation}\label{eqn2:uv}
\mqty(u \\ v)_t=
\mqty(0 & \partial_x \\ \partial_x & 0)\mqty(u+u^2 \\ (1-\partial_x^2)^{-1}v).
\end{equation}
Throughout the section we employ the notation $\mathbf{u}=\mqty(u \\ v)$. We remark that \eqref{eqn2:uv} is in the Hamiltonian form
\[
\mathbf{u}_t=J\delta H(\mathbf{u}),
\]
where $J=\mqty(0 & \partial_x \\ \partial_x & 0)$ is the symplectic form, 
\[
H(\mathbf{u})=\int \left(\frac{1}{2}u^2+\frac{1}{3}u^3+\frac{1}{2}v(1-\partial_x^2)^{-1}v\right)~\dd{x}
\]
is the Hamiltonian, and $\delta$ denotes variational differentiation. In addition to $H$, \eqref{eqn2:uv} has three conserved quantities
\begin{align*}
&P(\mathbf{u})=\int uv~\dd{x}&&\text{(momentum)}, \\
&M_1(\mathbf{u})=\int u~\dd{x}\quad\text{and}\quad
M_2(\mathbf{u})=\int v~\dd{x} &&\text{(masses)}.
\end{align*}

\subsection{Parametrization of periodic traveling waves}\label{sec2:periodic}

A traveling wave of \eqref{eqn2:uv} takes the form $\mathbf{u}(x-ct-x_0)$, where $c\neq0,\in\mathbb{R}$ is the wave speed and $x_0\in\mathbb{R}$ the spatial translate, and it arises as a critical point of the `augmented Hamiltonian'
\[
H_\text{aug}=H+cP+b_1M_1+b_2M_2
\]
for some $b_1$, $b_2\in\R$. That is,
\begin{equation}\label{eqn2:uv0}
\delta H_\text{aug}(\mathbf{u})
=\mqty(u+u^2+cv+b_1 \\ (1-\partial_x^2)^{-1}v+cu+b_2)=\boldsymbol{0}.
\end{equation}
Eliminating $v$ from \eqref{eqn2:uv0}, we arrive at
\begin{equation}\label{eqn2:u0}
c^2u''+(1-c^2)u+u^2+b_1-b_2c=0.
\end{equation}
Here and elsewhere, the prime denotes ordinary differentiation. Multiplying \eqref{eqn2:u0} through by $u'$ and integrating, moreover,
\begin{equation}\label{eqn2:E-V}
\frac12c^2(u')^2=E-V(u;c,b_1,b_2),\quad V(u;c,b_1,b_2)=\frac{1}{3}u^3+\frac12(1-c^2)u^2+(b_1-b_2c)u,
\end{equation}
for some $E\in\R$. One can perform phase plane analysis for the existence of non-constant periodic solutions of \eqref{eqn2:E-V} and, hence, non-constant periodic traveling waves of \eqref{eqn2:uv}, depending on $c$, $b_1$, $b_2$, $E\in\R$, or depending on $\alpha$, $\beta$, $\gamma\in\R$ such that $\alpha<\beta<\gamma$, the roots of the cubic polynomial $E-V(u;c,b_1,b_2)$, provided that they exist. A straightforward calculation reveals that
\begin{equation}\label{eqn2:cbE->abc}
E=\frac13\alpha\beta\gamma,\quad 
b_1-b_2c=\frac13(\alpha\beta+\beta\gamma+\alpha\gamma)
\quad\text{and}\quad c^2-1=\frac23(\alpha+\beta+\gamma).
\end{equation}
Since $c\neq0$, $\alpha+\beta+\gamma>-\frac32$ must hold true. Let
\begin{equation}\label{def2:triangle}
\triangle=\Big\{(\alpha,\beta,\gamma)\in\R^3: \alpha<\beta<\gamma\quad\text{and}\quad\alpha+\beta+\gamma>-\frac32\Big\}.
\end{equation}
Figure~\ref{BandsAndGaps} shows $\triangle$ in the $(\alpha,\beta)$ plane when $\gamma=1$. 

A traveling wave of \eqref{eqn2:uv} depends additionally on $x_0$. On the other hand, \eqref{eqn2:uv} remains invariant under the translation of the $x$ axis, whereby we can mod out $x_0$, requiring $\mathbf{u}'(0)=\mathbf{0}$, that is, $\mathbf{u}$ is even. 

Actually, periodic solutions of \eqref{eqn2:u0} can be found in closed form in terms of the Jacobi elliptic functions. Recall that $y(x)=\sn(x,\sqrt{m})$ is a solution of
\begin{align*}
y''+(1+m)y=&2my^3,
\intertext{where $m\in(0,1)$ is the elliptic parameter, and $z(x)=\sn^2(x,\sqrt{m})$ is a solution of}
z''+4(1+m)z=&2+6mz^2.
\end{align*}
Throughout we work with the elliptic parameter rather than the elliptic modulus $k$, where $m=k^2$. Let $(\alpha,\beta,\gamma)\in\triangle$ and our task is to solve \eqref{eqn2:u0} or, equivalently,
\begin{equation}\label{eqn2:u0'}
u''-\frac{2(\alpha+\beta+\gamma)}{2(\alpha+\beta+\gamma)+3}u+ \frac{3}{2(\alpha+\beta+\gamma)+3}u^2 +\frac{\alpha\beta+\beta\gamma+\alpha\gamma}{2(\alpha+\beta+\gamma)+3}=0
\end{equation}
by \eqref{eqn2:cbE->abc}, subject to, say,
\[
u(0)=\gamma\quad\text{and}\quad u'(0)=0.
\]
Trying
\begin{equation}\label{def2:sn}
u(x)=\gamma-(\gamma-\beta)\sn^2(ax,\sqrt{m}),
\end{equation}
after some algebra we find
\begin{equation}\label{def2:am}
m=\frac{\gamma-\beta}{\gamma-\alpha}\quad\text{and}\quad
a=\sqrt{\frac{\gamma-\alpha}{4(\alpha+\beta+\gamma)+6}}.
\end{equation}
Clearly, $0<m<1$ and $a>0$. The period of \eqref{def2:sn} is 
\begin{equation}\label{def2:T}
T=\frac{2K(\sqrt{m})}{a},
\end{equation}
where 
\[
K(\sqrt{m}) = \int_0^1 \frac{\dd{s}}{\sqrt{(1-s^2)(1-ms^2)}}
\]
is the complete elliptic integral of the first kind.

To summarize, whenever $(\alpha,\beta,\gamma)\in\triangle$, \eqref{def2:sn} gives a periodic traveling wave of \eqref{eqn2:uv}, depending on $\alpha$, $\beta$, $\gamma$ by \eqref{def2:am}, where $v$ is in \eqref{eqn2:uv0}.

\begin{remark*}
More generally, consider 
\begin{equation}\label{eqn:fBou}
u_{tt}-u_{xx}-u_{xxtt}-f(u)_{xx}=0,\quad\text{$f$ is some nonlinearity}.
\end{equation}
When $f(u)=u^2$, \eqref{eqn:fBou} becomes \eqref{eqn1:rBou}. 
Proceeding as above, one can deduce that a traveling wave of \eqref{eqn:fBou} satisfies
\begin{equation}\label{eqn:fE-V}
\frac12c^2(u')^2=E-V(u;c,b_1,b_2),\quad V(u,c,b_1,b_2)=F(u)+\frac12(1-c^2)u^2+(b_1-b_2c)u,
\end{equation}
for some $c\neq0,\in\mathbb{R}$ for some $b_1$, $b_2$, $E\in\mathbb{R}$, where $F'=f$. If $E-V(u;c,b_1,b_2)$ has simple roots $u_\pm$, depending on $c$, $b_1$, $b_2$, $E$, such that $u_-<u_+$ and if $E-V(u;c,b_1,b_2)>0$ for $u\in(u_-,u_+)$ then \eqref{eqn:fE-V} has a non-constant periodic solution, whose period is 
\[
T=\sqrt{2}\int^{u_+}_{u_-}\frac{c~\dd{u}}{\sqrt{E-V(u;c,b_1,b_2)}}
=\frac{\sqrt{2}}{2}\oint_\varGamma\frac{c~\dd{u}}{\sqrt{E-V(u;c,b_1,b_2)}},
\]
where $\varGamma$ is a Jordan curve in the complex $u$ plane containing the interval $[u_-,u_+]$  in the interior region. One can treat the contour integration at the branch points in the usual way. 

All of what follows would succeed with \eqref{eqn:fBou} mutatis mutandis. One does not expect that periodic traveling waves of \eqref{eqn:fBou} can be found in closed form, except for few nonlinearities, but their parametrization by $c$, $b_1$, $b_2$, $E$ suffices. 
Here we focus our attention to $f(u)=u^2$, nevertheless, because such a nonlinearity is characteristic of many wave phenomena and, moreover, leads to simple analytic formulae. 
\end{remark*}

\subsection{Asymptotic spectral analysis to short wavelength perturbations}\label{sec2:spec}

Let $(\alpha,\beta,\gamma)\in\triangle$ (see \eqref{def2:triangle}) and $\mathbf{u}$ denotes a periodic traveling wave of \eqref{eqn2:uv} (see \eqref{eqn2:uv0}, \eqref{def2:sn}, \eqref{def2:am}), $c$ the wave speed (see the third equation of \eqref{eqn2:cbE->abc}) and $T$ the period (see \eqref{def2:T}), depending on $\alpha$, $\beta$, $\gamma$. Linearizing \eqref{eqn2:uv} about $\mathbf{u}$ in the frame of reference moving at the speed $c$, we arrive at
\[
\boldsymbol{\phi}_t=J\delta^2H_\text{aug}(\mathbf{u})\boldsymbol{\phi}
=\mqty(0 & \partial_x \\ \partial_x & 0)
\mqty(1+2u & c \\ c & (1-\partial_x^2)^{-1})\boldsymbol{\phi}.
\]
Seeking a solution of the form $\boldsymbol{\phi}(x,t)=e^{\lambda t}\boldsymbol{\phi}(x)$,  $\lambda\in\mathbb{C}$, we arrive at 
\begin{equation}\label{eqn2:spec}
\lambda\boldsymbol{\phi}
=\mqty(c\partial_x & \partial_x(1-\partial_x^2)^{-1} \\ \partial_x(1+2u) & c\partial_x)\boldsymbol{\phi}=:\mathbf{L}(\mathbf{u})\boldsymbol{\phi},
\end{equation}
where $\mathbf{L}(\mathbf{u}):H^1(\R)\times H^1(\R)\subset L^2(\R)\times L^2(\R)\to L^2(\R)\times L^2(\R)$. We say that $\mathbf{u}$ is spectrally stable if the spectrum of $\mathbf{L}(\mathbf{u})$ lies in the left half plane of $\mathbb{C}$, where $\Re(\lambda)\leq0$, and spectrally unstable otherwise. The spectrum of $\mathbf{L}(\mathbf{u})$ is symmetric about the real and imaginary axes because \eqref{eqn2:uv} is in the Hamiltonian form, whereby $\mathbf{u}$ is spectrally stable if and only if the spectrum of $\mathbf{L}(\mathbf{u})$ is contained in the imaginary axis. 

Since $\mathbf{L}(\mathbf{u})$ is $T$ periodic in the $x$ variable, Floquet theory (see \cite[Theorem~2.95]{chicone}, for instance) implies that $\lambda$ is in the spectrum if and only if \eqref{eqn2:spec} has a nontrivial bounded solution such that 
\[
\boldsymbol{\phi}(x+T)=e^{\frac{2\pi ikx}{T}}\boldsymbol{\phi}(x)
\quad\text{for some $k\in\R$}. 
\]
One can write $k=[k]+\tau$, where $[k]\in\mathbb{Z}$, and $\tau\in(-\frac12,\frac12]$ is the Floquet exponent.
We focus our attention to superharmonic and, particularly, short wavelength perturbations, for which $|k|\gg1$. 
Indeed, our numerical experiments (see Figure~\ref{FourthBandAndGapExample}, for instance) show a striking and new instability phenomenon---spectrum off the imaginary axis extending all the way to $\pm i\infty$---for some $\mathbf{u}$, that is, for some $\alpha$, $\beta$, $\gamma$. The aim here is to explain such instability. Subharmonic and, particularly, long wavelength perturbations, for which $|k|\neq 0,\ll1$, will be addressed in \cite{BHW2}.  

Since
\[
\mathbf{L}(\mathbf{u})e^{\frac{2\pi ikx}{T}}=\mqty(O(k) & O(k^{-1}) \\ O(k) & O(k))
\quad\text{as $|k|\to\infty$},
\]
we make the ansatz 
\begin{equation}\label{def2:k>>1}
\lambda=\lambda^{(1)}k+\lambda^{(0)}+\lambda^{(-1)}k^{-1}+\cdots\quad\text{and}\quad
\boldsymbol{\phi}(x)=e^{\frac{2\pi ikx}{T}}
(\boldsymbol{\phi}^{(0)}(x)+\boldsymbol{\phi}^{(-1)}(x)k^{-1}+\boldsymbol{\phi}^{(-2)}(x)k^{-2}+\cdots)
\end{equation}
as $|k|\to\infty$ for some $\lambda^{(1)}$, $\lambda^{(0)}$, $\lambda^{(-1)},\ldots\in\mathbb{C}$ for some $\boldsymbol{\phi}^{(0)}$, $\boldsymbol{\phi}^{(-1)}$, $\boldsymbol{\phi}^{(-2)},\ldots\in L^\infty(\mathbb{R})\times L^\infty(\mathbb{R})$, to be determined. Substituting \eqref{def2:k>>1} into \eqref{eqn2:spec}, 
\begin{align*}
(&\lambda^{(1)}k+\lambda^{(0)}+\lambda^{(-1)}k^{-1}+\cdots)
(\boldsymbol{\phi}^{(0)}+\boldsymbol{\phi}^{(-1)}k^{-1}+\boldsymbol{\phi}^{(-2)}k^{-2}+\cdots)\\
&=\mqty(c\big(\frac{2\pi ik}{T}+\partial_x\big) 
& \big(\frac{2\pi ik}{T}+\partial_x\big)
\big(1-\big(\frac{2\pi ik}{T}+\partial_x\big)^2\big)^{-1}\\
\big(\frac{2\pi ik}{T}+\partial_x\big)(1+2u) 
& c\big(\frac{2\pi ik}{T}+\partial_x\big))
(\boldsymbol{\phi}^{(0)}+\boldsymbol{\phi}^{(-1)}k^{-1}+\boldsymbol{\phi}^{(-2)}k^{-2}+\cdots)\\
&=:(\mathbf{L}^{(1)}k+\mathbf{L}^{(0)}+\mathbf{L}^{(-1)}k^{-1}+\cdots) (\boldsymbol{\phi}^{(0)}+\boldsymbol{\phi}^{(-1)}k^{-1}+\boldsymbol{\phi}^{(-2)}k^{-2}+\cdots)
\end{align*}
as $|k|\to\infty$, where 
\begin{align*}
\mathbf{L}^{(1)}=\mqty(\frac{2\pi ic}{T} & 0 \\ \frac{2\pi i}{T}(1+2u) & \frac{2\pi ic}{T}),\quad
\mathbf{L}^{(0)}=\mqty(c\partial_x & 0 \\ \partial_x(1+2u) & c\partial_x)
\quad\text{and}\quad
\mathbf{L}^{(-1)}=\mqty(0 & -\frac{T}{2\pi i} \\ 0 & 0).  
\end{align*}

At the order of $k$, we gather
\[
\lambda^{(1)}\boldsymbol{\phi}^{(0)}=\mathbf{L}^{(1)}\boldsymbol{\phi}^{(0)}
=\mqty(\frac{2\pi ic}{T} & 0 \\ \frac{2\pi i}{T}(1+2u) & \frac{2\pi ic}{T})\boldsymbol{\phi}^{(0)},
\]
whence
\begin{equation}\label{eqn2:O(k)}
\lambda^{(1)}=\frac{2\pi ic}{T} \quad\text{and}\quad 
\boldsymbol{\phi}^{(0)}=\mqty(0 \\ \phi^{(0)}_{2}),
\end{equation}
where $\phi^{(0)}_{2}$ is bounded and otherwise arbitrary. 

At the order of $1$, we gather
\begin{align*}
\frac{2\pi ic}{T}\mqty( \phi^{(-1)}_1 \\ \phi^{(-1)}_2)+\lambda^{(0)}\mqty(0 \\ \phi^{(0)}_{2})
=&\lambda^{(1)}\boldsymbol{\phi}^{(-1)}+\lambda^{(0)}\boldsymbol{\phi}^{(0)}\\
=&\mathbf{L}^{(1)}\boldsymbol{\phi}^{(-1)}+\mathbf{L}^{(0)}\boldsymbol{\phi}^{(0)} \\
=&\mqty(\frac{2\pi ic}{T} & 0 \\ \frac{2\pi i}{T}(1+2u) & \frac{2\pi ic}{T})\mqty( \phi^{(-1)}_1 \\ \phi^{(-1)}_2)
+\mqty(c\partial_x & 0 \\ \partial_x(1+2u) & c\partial_x)\mqty(0 \\ \phi^{(0)}_{2}),
\end{align*}
whence 
\begin{equation}\label{eqn2:O(1)}
\frac{2\pi i}{T}(1+2u)\phi^{(-1)}_{1}=(\lambda^{(0)}-c\partial_x)\phi^{(0)}_{2}.
\end{equation}
At the order of $k^{-1}$, similarly,
\begin{align*}
\mqty(0 & 0 \\ \frac{2\pi i}{T}(1+2u) & 0)\boldsymbol{\phi}^{(-2)}
=&(\mathbf{L}^{(1)}-\lambda^{(1)})\boldsymbol{\phi}^{(-2)}\\
=&-(\mathbf{L}^{(0)}-\lambda^{(0)})\boldsymbol{\phi}^{(-1)}
-(\mathbf{L}^{(-1)}-\lambda^{(-1)})\boldsymbol{\phi}^{(0)}\\
=&\mqty((\lambda^{(0)}-c\partial_x)\phi^{(-1)}_{1}
+\frac{T}{2\pi i}\phi^{(0)}_{2} \\
-\partial_x((1+2u)\phi^{(-1)}_{1})
+(\lambda^{(0)}-c\partial_x)\phi^{(-1)}_{2}
-\lambda^{(-1)}\phi^{(0)}_{2}),
\end{align*}
which is solvable by the Fredholm alternative, provided that 
\begin{equation}\label{eqn2:O(1/k)}
(\lambda^{(0)}-c\partial_x)\phi^{(-1)}_{1}+\frac{T}{2 \pi i}\phi^{(0)}_{2}=0.
\end{equation}

We can write \eqref{eqn2:O(1)} and \eqref{eqn2:O(1/k)} as
\[
\mqty(\frac{2\pi i}{T}(\lambda^{(0)}-c\partial_x) & 1 \\
-\frac{2\pi i}{T}(1+2u) & (\lambda^{(0)}-c\partial_x))
\mqty(\phi^{(-1)}_{1} \\ \phi^{(0)}_{2})=\boldsymbol{0},
\]
and rewrite more conveniently as
\[
\left(\lambda^{(0)}\boldsymbol{1}-\mqty(c\partial_x & -1 \\ 1+2u & c\partial_x)\right)\mqty(\frac{2\pi i}{T} \phi^{(-1)}_1 \\ \phi^{(0)}_2)=\boldsymbol{0},
\]
where $\boldsymbol{1}$ denotes the identity operator, or as a quadratic pencil as
\begin{equation}\label{eqn2:pencil}
((\lambda^{(0)}-c\partial_x)^2+1+2u)\phi=0.
\end{equation}
Our task is to distinguish whether $\text{Re}(\lambda^{(0)})=0$ or not, for which \eqref{eqn2:pencil} has a nontrivial bounded solution. 
After the change of variables
\begin{equation}\label{def2:y->phi}
\phi=ye^{-\lambda^{(0)}x},
\end{equation}
\eqref{eqn2:pencil} becomes the Hill's differential equation 
\begin{equation}\label{eqn2:Hill}
c^2y''+(1+2u)y=0. 
\end{equation}
Recall~\cite{Eastham,MW} that solutions of \eqref{eqn2:Hill} can be classified as:
\begin{itemize}
\item {\bf Elliptic (band)}. \eqref{eqn2:Hill} has two linearly independent solutions $y_1$ and $y_2$, say, such that $y_1(x+T)=e^{i\sigma T}y_1(x)$ and $y_2(x+T)=e^{-i\sigma T}y_2(x)$ for some $\sigma\in\mathbb{R}$;
\item {\bf Parabolic (band edge)}. \eqref{eqn2:Hill} has two linearly independent solutions $y_1$ and $xy_1+y_2$ such that $y_1(x+T)=y_1(x)$ and $y_2(x+T)=y_2(x)$; and 
\item {\bf Hyperbolic (gap)}. \eqref{eqn2:Hill} has two linearly independent solutions $y_1$ and $y_2$ such that 
\begin{equation}\label{def2:sigma}
y_1(x+T)=e^{\sigma T}y_1(x)\quad \text{and} \quad y_2(x+T) = e^{-\sigma T}y_2(x)\qquad \text{for some $\sigma>0$},
\end{equation}
\end{itemize}
the Lyapunov exponent. A degenerate band edge (closed gap) is included in a band for convenience. 

Therefore, $\lambda^{(0)}$ and bounded solutions of \eqref{eqn2:pencil} can be classified.

\begin{lemma}\label{lem:Hill}
If \eqref{eqn2:Hill} is elliptic (in a band) then \eqref{eqn2:pencil} has a bounded solution if and only if $\Re(\lambda^{(0)})=0$, in which case all solutions are bounded. If \eqref{eqn2:Hill} is parabolic (band edge) then \eqref{eqn2:pencil} has a bounded solution if and only if  $\Re(\lambda^{(0)})=0$, in which case there is only one linearly independent bounded solution.

If \eqref{eqn2:Hill} is hyperbolic (in a gap), on the other hand, then \eqref{eqn2:pencil} has a bounded solution if and only if $\Re(\lambda^{(0)})=\pm\sigma$, where $\sigma>0$ is the Lyapunov exponent of \eqref{eqn2:Hill} (see \eqref{def2:sigma}), in which case there is only one linearly independent bounded solution for each value of $\Re(\lambda^{(0)})$.
\end{lemma}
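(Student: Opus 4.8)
The plan is to exploit the change of variables \eqref{def2:y->phi}, $\phi=ye^{-\lambda^{(0)}x}$, which (as established just above the statement) turns the quadratic pencil \eqref{eqn2:pencil} into the Hill's equation \eqref{eqn2:Hill}. The decisive structural point is that \eqref{eqn2:Hill} does not involve $\lambda^{(0)}$; its coefficients depend only on the underlying wave $u$. Hence for each fixed $\lambda^{(0)}$ the map $y\mapsto ye^{-\lambda^{(0)}x}$ is a linear isomorphism from the two-dimensional solution space of \eqref{eqn2:Hill} onto that of \eqref{eqn2:pencil}, and a solution $\phi$ of \eqref{eqn2:pencil} lies in $L^\infty(\mathbb{R})$ if and only if the corresponding Hill solution $y$ satisfies $ye^{-\lambda^{(0)}x}\in L^\infty(\mathbb{R})$. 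First I would record the elementary fact underlying every case: for a nontrivial finite sum $g(x)=\sum_j e^{\mu_j x}p_j(x)$ with nonzero $T$ periodic envelopes $p_j$, the exponential growth rate of $g$ as $x\to+\infty$ is $\max_j\Re(\mu_j)$ and as $x\to-\infty$ is $\min_j\Re(\mu_j)$ (linear independence of the Floquet modes rules out cancellation of the dominant term); in particular $g\in L^\infty(\mathbb{R})$ precisely when every $\Re(\mu_j)=0$. Each case of the lemma then reduces to reading off the Floquet exponents of the Hill solutions, shifting them by $-\lambda^{(0)}$, and applying this dichotomy.

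In the elliptic case the Floquet solutions are $y_{1,2}(x)=e^{\pm i\sigma x}p_{1,2}(x)$ with $p_{1,2}$ periodic, so after multiplication by $e^{-\lambda^{(0)}x}$ the exponents become $\pm i\sigma-\lambda^{(0)}$, both with real part $-\Re(\lambda^{(0)})$. The growth fact then gives that some---equivalently every---nontrivial solution of \eqref{eqn2:pencil} is bounded precisely when $\Re(\lambda^{(0)})=0$, in which case $e^{-\lambda^{(0)}x}$ is a pure phase and the entire solution space stays bounded. In the parabolic case I would use that \eqref{eqn2:Hill} has a periodic solution $y_1$ together with a secular solution $xy_1+y_2$ growing linearly; then $y_1e^{-\lambda^{(0)}x}$ is bounded iff $\Re(\lambda^{(0)})=0$, whereas the secular solution yields either linear growth (when $\Re(\lambda^{(0)})=0$) or one-sided exponential growth (when $\Re(\lambda^{(0)})\neq0$) and is therefore never bounded. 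Hence a bounded solution exists iff $\Re(\lambda^{(0)})=0$, and it is unique up to scalar multiples, namely the $y_1$ direction.

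The hyperbolic case is where the nonzero real part enters. Here the Floquet solutions are $y_1(x)=e^{\sigma x}p_1(x)$ and $y_2(x)=e^{-\sigma x}p_2(x)$ with $\sigma>0$ the Lyapunov exponent of \eqref{def2:sigma}, so the shifted exponents have real parts $\sigma-\Re(\lambda^{(0)})$ and $-\sigma-\Re(\lambda^{(0)})$, which are distinct. By the growth fact, $y_1e^{-\lambda^{(0)}x}$ is bounded iff $\Re(\lambda^{(0)})=\sigma$ and $y_2e^{-\lambda^{(0)}x}$ iff $\Re(\lambda^{(0)})=-\sigma$, while a combination involving both modes has its $+\infty$ growth governed by the larger real part and its $-\infty$ growth by the smaller, which cannot be simultaneously nonpositive and nonnegative. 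Thus \eqref{eqn2:pencil} has a bounded solution iff $\Re(\lambda^{(0)})=\pm\sigma$, and for each sign exactly one Floquet direction survives, giving a bounded solution unique up to scalar multiples.

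The main obstacle I anticipate is not any single case but making the growth dichotomy rigorous and uniform: I must verify that the periodic envelopes $p_j$ genuinely do not decay (so a growing exponent really forces unboundedness), justify that no cancellation among the two Floquet modes can lower the one-sided growth rate below the dominant exponent---this is the substance of the elliptic ``only if'' direction and of excluding mixed solutions in the hyperbolic case---and treat with care the degenerate situations where the Floquet multipliers coincide on a closed gap, which the statement folds into the elliptic (band) case. All of these are dispatched by the elementary lemma on sums of exponential-times-periodic functions once its hypotheses are checked against the Floquet normal form of \eqref{eqn2:Hill}.
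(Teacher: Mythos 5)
Your proposal is correct and follows essentially the same route as the paper: the change of variables \eqref{def2:y->phi} together with the Floquet normal form of solutions of \eqref{eqn2:Hill}, reading off boundedness of $ye^{-\lambda^{(0)}x}$ from the shifted exponents. The paper's own proof is only a two-sentence sketch of this, so your version simply supplies the details (notably the ``only if'' directions and the exclusion of mixed Floquet modes in the hyperbolic case) that the paper leaves implicit.
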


\begin{proof}
If $\Re(\lambda^{(0)})=0$ then \eqref{def2:y->phi} maps from bounded solutions of \eqref{eqn2:Hill} to bounded solutions of \eqref{eqn2:pencil}. Otherwise, \eqref{def2:y->phi} can map to bounded solutions of \eqref{eqn2:pencil}, provided that $\Re(\lambda^{(0)})=\pm\sigma$. 
\end{proof}

We summarize our conclusion. 

\begin{theorem}\label{thm:Bou}
Let $(\alpha,\beta,\gamma)\in\triangle$ and $\mathbf{u}$ denotes a periodic traveling wave of \eqref{eqn2:uv}, $c$ the wave speed and $T$ the period, depending on $\alpha$, $\beta$, $\gamma$. If \eqref{eqn2:Hill} is elliptic (in a band) then the spectrum of \eqref{eqn2:spec} tends to infinity along the imaginary axis. If \eqref{eqn2:Hill} is hyperbolic (in a gap), 
on the other hand, then the spectrum tends to infinity along $\pm\sigma+i\R$, where $\sigma>0$ is the Lyapunov exponent of \eqref{eqn2:Hill} (see \eqref{def2:sigma}).
\end{theorem}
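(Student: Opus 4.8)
The plan is to read off the asymptotic location of the spectrum directly from the expansion \eqref{def2:k>>1} together with Lemma~\ref{lem:Hill}. Recall that by Floquet theory the spectrum of $\mathbf{L}(\mathbf{u})$ is the union over $k\in\R$ of the Bloch eigenvalues $\lambda(k)$ of \eqref{eqn2:spec}, and the theorem concerns the regime $|k|\to\infty$. The two assertions to establish are that $\Im(\lambda)\to\pm\infty$ (the spectrum tends to infinity) and that $\Re(\lambda)$ converges to $0$ in the elliptic case, respectively to $\pm\sigma$ in the hyperbolic case (the direction of approach).

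First I would record that the leading coefficient $\lambda^{(1)}=\tfrac{2\pi i c}{T}$ of \eqref{eqn2:O(k)} is purely imaginary, since $c,T\in\R$ with $T>0$ and $c\neq0$. Hence the leading term $\lambda^{(1)}k$ contributes nothing to $\Re(\lambda)$ while $\Im(\lambda)=\tfrac{2\pi c}{T}k+O(1)$ diverges to $\pm\infty$ as $k\to\pm\infty$, with the sign governed by $\sign c$. This yields the ``tends to infinity'' part at once. Since the remaining corrections in \eqref{def2:k>>1} are $O(k^{-1})$, the real part in the limit is governed entirely by $\lambda^{(0)}$, so it suffices to pin down $\Re(\lambda^{(0)})$.

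Second, I would invoke the solvability analysis already carried out: a bounded Bloch solution of \eqref{eqn2:spec} at these orders exists precisely when \eqref{eqn2:O(1)} and \eqref{eqn2:O(1/k)}, equivalently the quadratic pencil \eqref{eqn2:pencil}, admit a nontrivial bounded $\phi$. Through the change of variables \eqref{def2:y->phi} this is the Hill equation \eqref{eqn2:Hill}, and Lemma~\ref{lem:Hill} then determines the admissible $\Re(\lambda^{(0)})$: it equals $0$ in the elliptic (band) or parabolic (band-edge) case, and $\pm\sigma$ with $\sigma>0$ the Lyapunov exponent of \eqref{def2:sigma} in the hyperbolic (gap) case. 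Combining with the first step gives $\Re(\lambda)\to0$ in the elliptic case and $\Re(\lambda)\to\pm\sigma$ in the hyperbolic case, while $\Im(\lambda)\to\pm\infty$, which is exactly convergence to the imaginary axis, respectively to $\pm\sigma+i\R$ (both vertical lines appearing, consistent with the symmetry of the spectrum about the imaginary axis).

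The step I expect to be the main obstacle is upgrading the formal ansatz \eqref{def2:k>>1} to a rigorous statement that the genuine high-frequency spectrum lies along these curves. The leading matrix $\mathbf{L}^{(1)}$ carries the repeated eigenvalue $\tfrac{2\pi i c}{T}$ with only a single eigenvector, so the expansion in $k^{-1}$ is a singular, Jordan-type perturbation; one must verify that the solvability conditions at successive orders (the Fredholm alternative used to obtain \eqref{eqn2:O(1/k)}) close up and that the series is a valid asymptotic expansion of an actual eigenvalue branch with uniform remainder $\Re(\lambda)-\Re(\lambda^{(0)})=O(k^{-1})$. I would control this either by a Lyapunov--Schmidt reduction of the Bloch eigenvalue problem in the small parameter $k^{-1}$, exploiting that the regularizing factor $\partial_x(1-\partial_x^2)^{-1}$ is $O(k^{-1})$ on the relevant frequencies, or by tracking the zeros of the associated Hill discriminant directly against the band--gap structure of \eqref{eqn2:Hill}; in either route one must also confirm that every high-frequency spectral point is captured, so that no additional branch escapes the description.
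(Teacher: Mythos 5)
Your proposal follows essentially the paper's own route: the theorem there is presented as a summary of the preceding formal asymptotic analysis---the purely imaginary leading term $\lambda^{(1)}k=\tfrac{2\pi ick}{T}$ forcing the spectrum to $\pm i\infty$, the solvability conditions \eqref{eqn2:O(1)} and \eqref{eqn2:O(1/k)} collapsing to the quadratic pencil \eqref{eqn2:pencil}, the substitution \eqref{def2:y->phi} producing the Hill equation \eqref{eqn2:Hill}, and Lemma~\ref{lem:Hill} fixing $\Re(\lambda^{(0)})$ to be $0$ in a band and $\pm\sigma$ in a gap. The rigor gap you flag (promoting the formal ansatz \eqref{def2:k>>1} to a genuine statement about all high-frequency spectral branches with controlled remainders) is a fair concern, but the paper does not address it either: it takes the formal expansion itself as the proof, so your treatment is on the same footing.
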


Consequently, when \eqref{eqn2:Hill} is in a gap, the periodic traveling wave is spectrally unstable to arbitrarily short wavelength perturbations, in marked contrast to modulational instability to arbitrarily long wavelength perturbations. Note that, while we cannot establish stability in this way having \eqref{eqn2:Hill} hyperbolic is a sufficient condition for instability.

We remark that $\Im(\lambda^{(0)})$ does not occupy a crucial role. This is because $\lambda\sim\lambda^{(1)}k=\frac{2\pi ick}{T}$ as $|k|\to\infty$ to leading order (see \eqref{def2:k>>1} and \eqref{eqn2:O(k)}) and $\Im(\lambda^{(0)})$ serves to translate such---otherwise arbitrary---spectrum along the imaginary axis. Indeed, we can take $\Im(\lambda^{(0)})=0$ without loss of generality. 

Moreover, the Lyapnuov exponent of \eqref{eqn2:Hill} can be found in terms of the monodromy matrix. Let $(\alpha,\beta,\gamma)\in\triangle$, and $y_1$ and $y_2$ denote two solutions of \eqref{eqn2:Hill} such that
\[
y_1(0)=1, \quad y_1'(0)=0 \quad \text{and} \quad y_2(0)=0, \quad y_2'(0)=1.
\]
Let
\[
\mathbf{Y}=\mqty(y_1(T) & y_2(T) \\ y_1'(T) & y_2'(T))
\]
be the monodromy matrix of \eqref{eqn2:Hill}, depending on $\alpha$, $\beta$, $\gamma$. Notice that $\det(\mathbf{Y})=1$. 
If \eqref{eqn2:Hill} is in a gap, that is, $|\tr(\mathbf{Y})|>2$, then $\mathbf{Y}$ has an eigenvalue of modulus $>1$, and
\[
\sigma=\frac{\log(|\mu|)}{T},\quad\text{$\mu$ is the eigenvalue of $\mathbf{Y}$ such that $|\mu|>1$}.
\]
Correspondingly, the spectrum of \eqref{eqn2:spec} 
tends to infinity along $\lambda^{(0)}+i\mathbb{R}$, where 
\[
\lambda^{(0)}=\pm\frac{\log(|\mu|)}{T} \quad\Big(\text{mod}\quad\frac{2\pi i c}{T}\Big).
\]
The assumption that $\Re(\lambda^{(0)})\neq 0$ guarantees that $|\mu|\neq 1$, which in turn guarantees that \eqref{def2:y->phi} is valid. That means, \eqref{eqn2:Hill} is not in a band or band edge and, particularly, $\tr(\mathbf{Y})\neq\pm 2$, whereby solutions of \eqref{eqn2:Hill} do not exhibit secular growth.

\subsection{Numerical experiments corroborating analytical predictions}\label{sec2:numerics}

We discuss a Fourier spectral method for computing the spectrum of \eqref{eqn2:spec} numerically. 
For $\tau\in(-\frac12,\frac12]$, the Floquet exponent, let
\[
\boldsymbol{\phi}(x)=e^{\frac{2\pi i\tau x}{T}}\boldsymbol{\psi}(x),  
\quad \boldsymbol{\psi}(x+T)=\boldsymbol{\psi}(x),
\]
whence
\[
\boldsymbol{\psi}(x)=\sum_{k\in\mathbb{Z}}\widehat{\boldsymbol{\psi}}_ke^{\frac{2\pi ikx}{T}},
\quad \text{$\widehat{\boldsymbol{\psi}}_k$ are the Fourier coefficients}.
\]
We rewrite \eqref{eqn2:spec} as
\begin{equation}\label{eqn2:spec'}
\lambda\boldsymbol{\psi}
=\mqty(c\qty(\frac{2\pi i\tau}{T}+\partial_x) & \qty(\frac{2\pi i\tau}{T}+\partial_x)\big(1-\qty(\frac{2\pi i\tau}{T}+\partial_x)^2\big)^{-1} \\ \qty(\frac{2\pi i\tau}{T}+\partial_x)(1+2u) & c\qty(\frac{2\pi i\tau}{T}+\partial_x))\boldsymbol{\psi},
\end{equation}
where  
\[
u(x)=\sum_{k\in\mathbb{Z}}\widehat{u}_ke^{\frac{2\pi ikx}{T}}, 
\quad \text{$\widehat{u}_k$ are the Fourier coefficients},
\]
$c$ and $T$ depend on $(\alpha,\beta,\gamma)\in\triangle$.
We then make the Fourier collocation projection of \eqref{eqn2:spec'} to the subspace of $L^2(-\frac T2,\frac T2)\times L^2(-\frac T2,\frac T2)$, for which $k=-N_k,-N_k+1,\dots,N_k$. 
For each $\tau$, we run the native eigenvalue solver in Mathematica for the resulting $(4N_k+2)\times(4N_k+2)$ matrix, made up of sums and products of diagonal and Toeplitz matrices, representing differential operators and multiplications by periodic functions, respectively. In all of our numerical experiments, $N_k=100$ and we take a $200$-points discretization of $\tau \in (-\frac{1}{2},\frac{1}{2}]$. 

We exploit an analytic formula for the Fourier series of a Jacobi elliptic function \cite{Kiper} 
\[
\sn^2(x,\sqrt{m}) = \frac{K(\sqrt{m})-E(\sqrt{m})}{m K(\sqrt{m})} - \frac{2\pi^2}{mK^2(\sqrt{m})} \sum_{k=1}^\infty\frac{k q^k}{1-q^{2k}} \cos(\frac{\pi kx}{K(\sqrt{m})}),
\]
to evaluate $\widehat{u}_k$ numerically, where $K(\sqrt{m})$ and $E(\sqrt{m})$ denote the complete elliptic integrals of the first and second kinds, $m\in(0,1)$ is the elliptic parameter, and $q=e^{-\frac{\pi K(\sqrt{1-m})}{K(\sqrt{m})}}.$ 
As an additional numerical check we integrate \eqref{eqn2:u0} (or \eqref{eqn2:u0'}) numerically and approximate the Fourier coefficients by numerical quadrature. The results are in excellent agreement. 

When \eqref{eqn2:Hill} is in a gap, we also integrate \eqref{eqn2:Hill} numerically to find the eigenvalues of the monodromy matrix. They determine the spectrum of \eqref{eqn2:spec} off the imaginary axis at infinity. 

\begin{figure}[htbp]
\begin{center}
\includegraphics[width=0.495\textwidth]{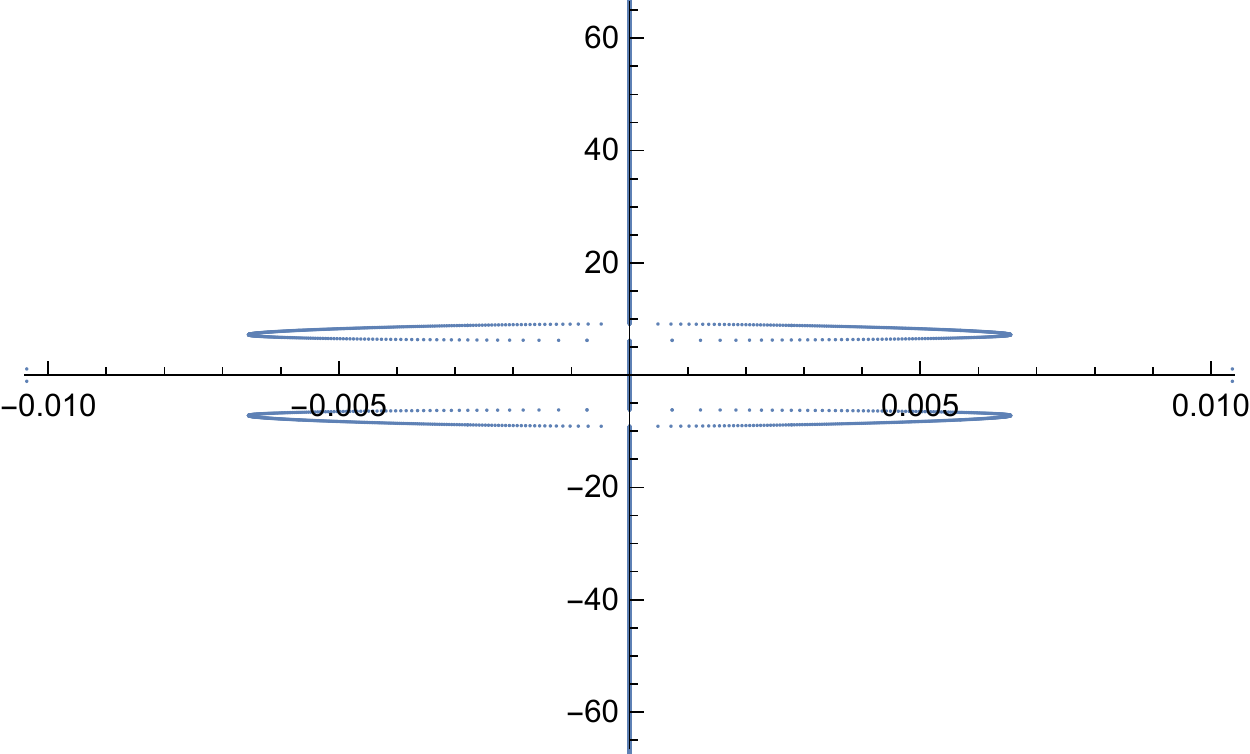}
\includegraphics[width=0.495\textwidth]{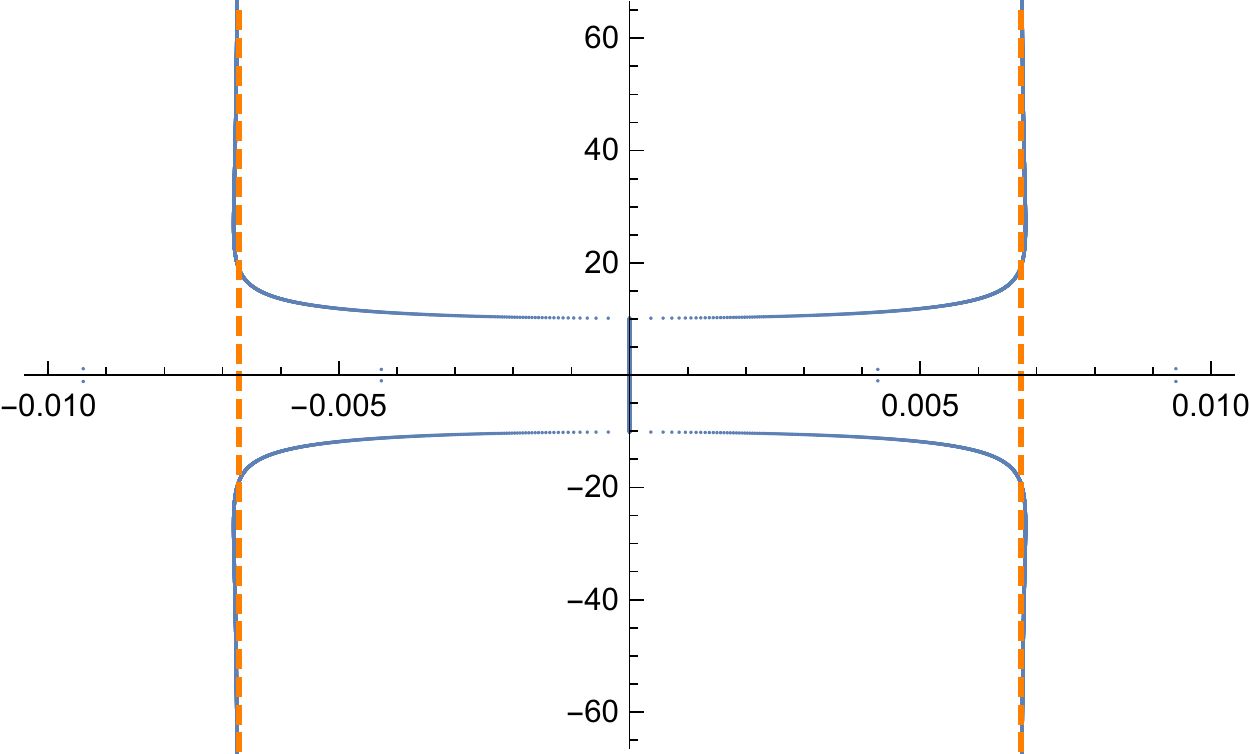}
\caption{The numerically computed spectrum of \eqref{eqn2:spec}, where $u$, $c$, $T$ depend on $\alpha$,~$\beta$,~$\gamma$. On the left, $\alpha=-0.7600$, $\beta=-0.006403$, $\gamma=1$, for which \eqref{eqn2:Hill} is in a band. Notice modulational instability near $0\in\mathbb{C}$, although it is hard to distinguish, and finite wavelength instability near $\pm 7.5i$. The spectrum lies along the imaginary axis otherwise. 
On the right, $\alpha=-0.7872$, $\beta=-0.006403$, $\gamma=1$, for which \eqref{eqn2:Hill} is in a gap. The spectrum tends towards infinity along the dashed lines $\pm\sigma+i\mathbb{R}$, where $\sigma\approx0.006726$. We report modulational instability near $0\in\mathbb{C}$.}
\label{FourthBandAndGapExample}
\end{center}
\end{figure}

Figure~\ref{FourthBandAndGapExample} provides two examples of the numerically computed spectrum of \eqref{eqn2:spec}. The periodic traveling waves are close together in $\triangle$ (see also Figure~\ref{BandsAndGaps})---indeed, only the value of $\alpha$ differs slightly---and yet they exhibit a dramatic difference in the spectrum at $\pm i\infty$. In the left panel, for which \eqref{eqn2:Hill} is elliptic (in a band), the spectrum lies apparently along the imaginary axis outside of some bounded set. In the right panel, on the other hand, the spectrum tends towards infinity along $\pm\sigma+i\mathbb{R}$, where $\sigma\approx0.006726$, and $\sigma$ agrees well with $\frac{\log(|\mu|)}{T}$, where $\mu$ is numerically computed eigenvalue of the monodromy matrix of \eqref{eqn2:Hill} such that $|\mu|>1$. Therefore the numerical result corroborates Theorem~\ref{thm:Bou}. There is a modulational instability to long wavelength perturbations near $0\in\mathbb{C}$ in both panels, although it is hard to distinguish at this scale. Also there is spectral instability to finite wavelength perturbations away from $0\in\mathbb{C}$ in the left panel. 

\begin{figure}[htbp]
\begin{center}
    \includegraphics[width=0.495\textwidth]{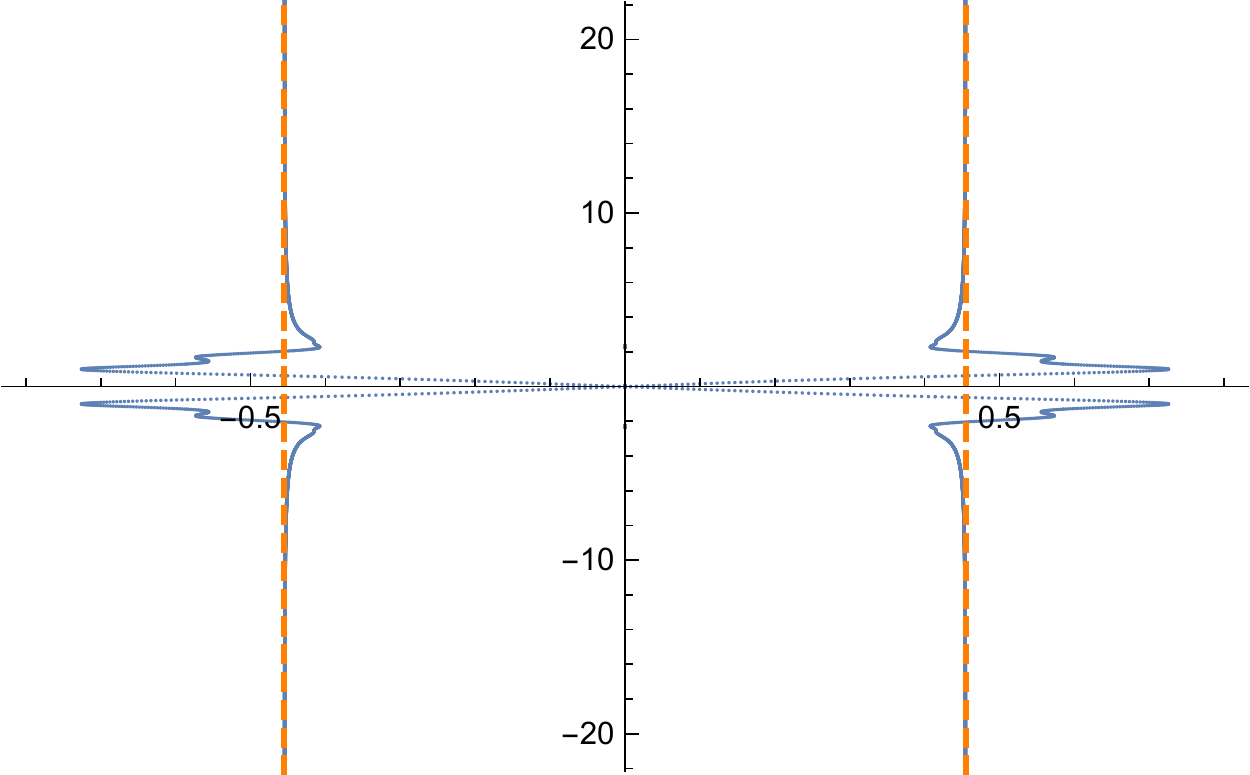}
    \includegraphics[width=0.495\textwidth]{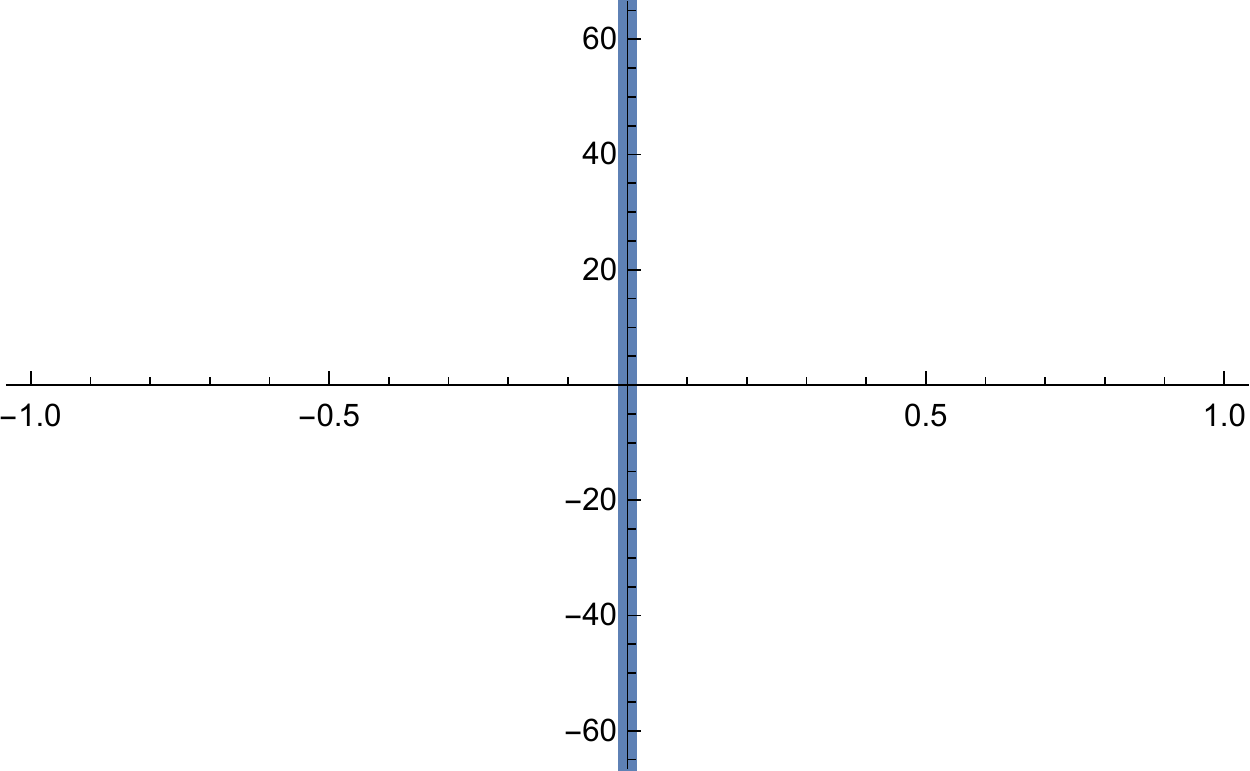}
\caption{The spectrum of \eqref{eqn2:spec}. On the left, $\alpha=-1.246$, $\beta=-1.149$, $\gamma=1$, for which \eqref{eqn2:Hill} is in a gap. The spectrum tends towards infinity along the dashed lines $\pm\sigma+i\mathbb{R}$, where $\sigma\approx0.455$. Notice modulational instability near $0\in\mathbb{C}$ (see also Figure~\ref{SecondGapExampleZoomedIn}). On the right, $\alpha=-2.034$, $\beta=0.7131$, $\gamma=1$, for which \eqref{eqn2:Hill} is in a band. The spectrum lies along the imaginary axis, suggesting spectral stability.}
\label{SecondGapExample}
\end{center}
\end{figure}

\begin{figure}[htbp]
\begin{center}
    \includegraphics[width=0.5\textwidth]{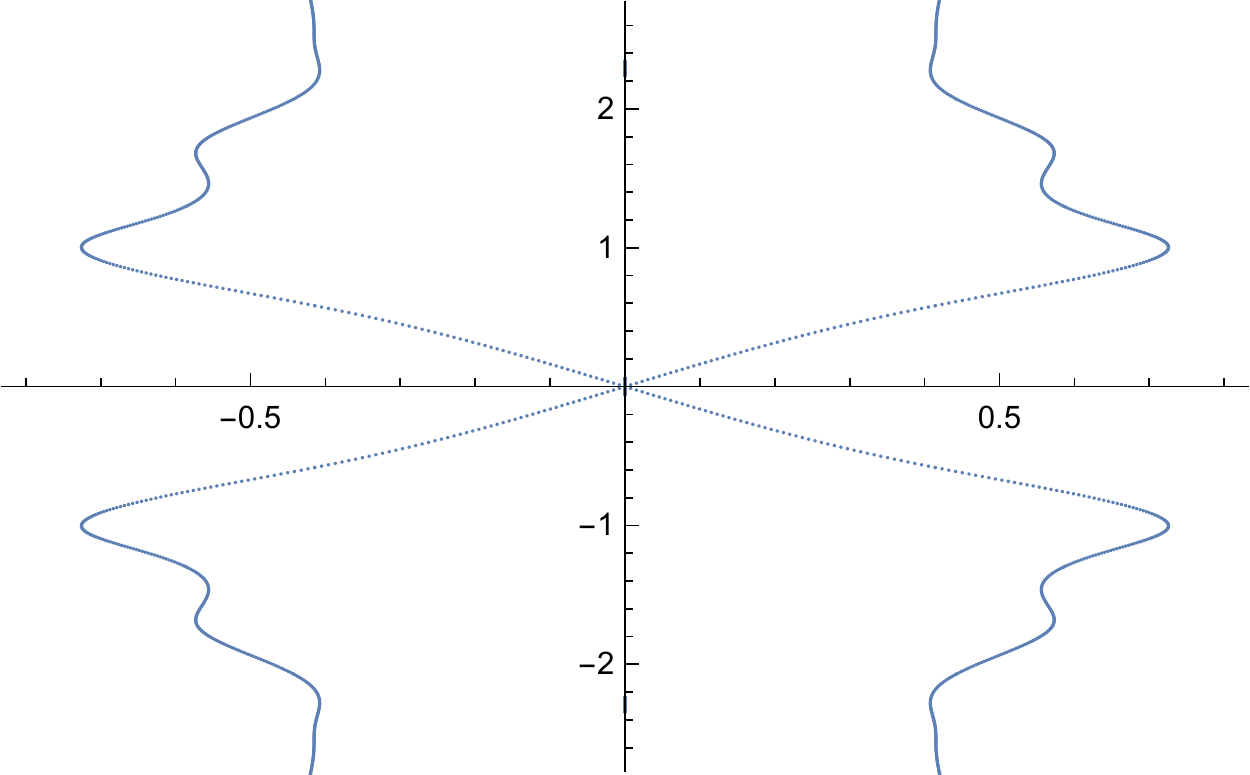}
\caption{$\alpha=-1.246$, $\beta=-1.149$, $\gamma=1$ (see the left panel of Figure~\ref{SecondGapExample}), and the spectrum of \eqref{eqn2:spec} near $0\in\mathbb{C}$ for modulational instability.}
\label{SecondGapExampleZoomedIn}
\end{center}
\end{figure}

In the left panel of Figure~\ref{SecondGapExample}, for which \eqref{eqn2:Hill} is in a gap---the second gap of the associated Lam\'e equation (see Section~\ref{sec2:Lame})---the numerically computed spectrum tends towards infinity along $\pm\sigma+i\mathbb{R}$, where $\sigma\approx0.455$, far greater than the right panel of Figure~\ref{FourthBandAndGapExample}, and $\sigma$ agrees well with $\frac{\log(|\mu|)}{T}$, where $\mu$ is the numerically computed eigenvalue of the monodromy matrix of \eqref{eqn2:Hill} such that $|\mu|>1$. See also Figure~\ref{SecondGapExampleZoomedIn} for modulational instability.  
 
Last but not least, in the right panel of Figure~\ref{SecondGapExample}, for which \eqref{eqn2:Hill} is in a band---indeed, \eqref{eqn2:Lame} is in the third band (see Section~\ref{sec2:Lame})---the numerical result is consistent with  Theorem~\ref{thm:Bou} as the spectrum tends towards infinity along the imaginary axis. The greatest real part of the numerically computed spectrum is of the order of $10^{-8}$, suggesting spectral stability. 


\subsection{Analytic formulae classifying the spectrum at infinity}\label{sec2:Lame}


We can rewrite \eqref{def2:sn} as 
\[
u(x)=\gamma-6ma^2c^2\sn^2(ax,\sqrt{m}),
\]
where $m$ and $a$ are in \eqref{def2:am}, and $c$ is in the third equation of \eqref{eqn2:cbE->abc}, 
whence \eqref{eqn2:Hill} as
\[
y''+\qty(\frac{2\gamma+1}{c^2}-12ma^2\sn^2(ax,\sqrt{m}))y=0.
\]
After the change of variables $x \mapsto \frac{x}{a}$, we arrive at the Lam\'e equation (in the Jacobi form) \cite{Arscott}
\begin{subequations}\label{eqn2:Lame}
\begin{equation}\label{eqn2:ell}
y''+(\ell-3(3+1)m\sn^2(x,\sqrt{m}))y=0,
\end{equation}
where 
\begin{equation}\label{def2:ell}
\ell=\frac{6(2\gamma+1)}{\gamma-\alpha}=4+4m+\frac{1}{a^2}
\end{equation}
\end{subequations}
by \eqref{eqn2:cbE->abc} and \eqref{def2:am}.

Recall~\cite{Arscott,KS} that \eqref{eqn2:ell} has exactly three finite (open) gaps plus one semi-infinite gap, whence four disjoint bands, whose band edges can be found in closed form in terms of the roots of some polynomials. Specifically, let $\{\ell^P_j\}_{j=1}^\infty$ denote the periodic eigenvalues of \eqref{eqn2:ell} and $\{\ell^A_j\}_{j=1}^\infty$ the anti-periodic eigenvalues, respectively, such that 
\[
-\infty<\ell_1^P<\ell_1^A<\ell_2^A<\ell_2^P<\ell_3^P<\ell_3^A<\ell_4^A\leq\cdots,
\]
and
\begin{align*}
\ell_1^P &= 2+5m-2\theta_1, 
&\ell_2^P &= 4+4m, 
&\ell_3^P &= 2+5m+2\theta_1, & \\
\ell_1^A &= 5+2m-\theta_2,
&\ell_2^A &= 5+5m-2\theta_3, 
&\ell_3^A &= 5+2m+2\theta_2,
&\ell_4^A& = 5+5m+2\theta_3,
\end{align*}
where
\[
\theta_1 = \sqrt{1-m+4m^2}, \quad
\theta_2 = \sqrt{4-m+m^2},\quad
\theta_3 = \sqrt{4-7m+4m^2},
\]
and $\ell_j^A=\ell_j^P=\ell_{j+1}^P=\ell_{j+1}^A$ for $j\geq 4,\in\mathbb{Z}$. 
The bands of \eqref{eqn2:ell} consist of
\begin{equation}\label{def2:bands}
(\ell_1^P,\ell_1^A), \quad (\ell_2^A,\ell_2^P), \quad (\ell_3^P,\ell_3^A)\quad\text{and}\quad (\ell_4^A,\infty),
\end{equation}
and the gaps consist of 
\begin{equation}\label{def2:gaps}
(-\infty,\ell_1^P),\quad (\ell_1^A,\ell_2^A), \quad (\ell_2^P,\ell_3^P) \quad\text{and}\quad (\ell_3^A,\ell_4^A).
\end{equation}
We refer to $(-\infty,\ell_1^P)$ as the zeroth gap for convenience. 

If \eqref{eqn2:Lame} lies in a band, that is, \eqref{def2:ell} is in \eqref{def2:bands}, then the spectrum of \eqref{eqn2:spec} tends to infinity along the imaginary axis. If \eqref{eqn2:Lame} is in a gap, that is, \eqref{def2:gaps}, on the other hand, then the spectrum tends to infinity along $\pm\sigma+i\mathbb{R}$ for some $\sigma>0$. Therefore we achieve analytic formulae, depending on $m$ and $a$, whence depending on $\alpha$, $\beta$, $\gamma$ by \eqref{def2:am}, which classify the spectrum of \eqref{eqn2:spec} at infinity. 

\begin{figure}[htbp]
\begin{center}
\includegraphics[width=.495\textwidth]{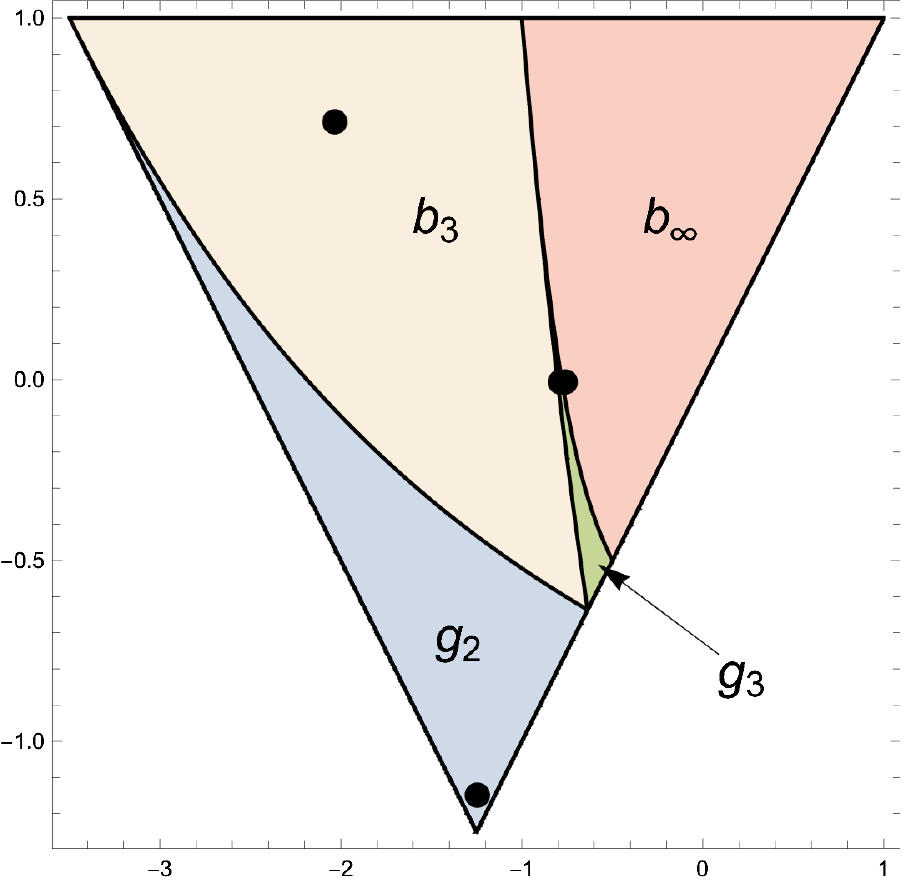}
\includegraphics[width=.495\textwidth]{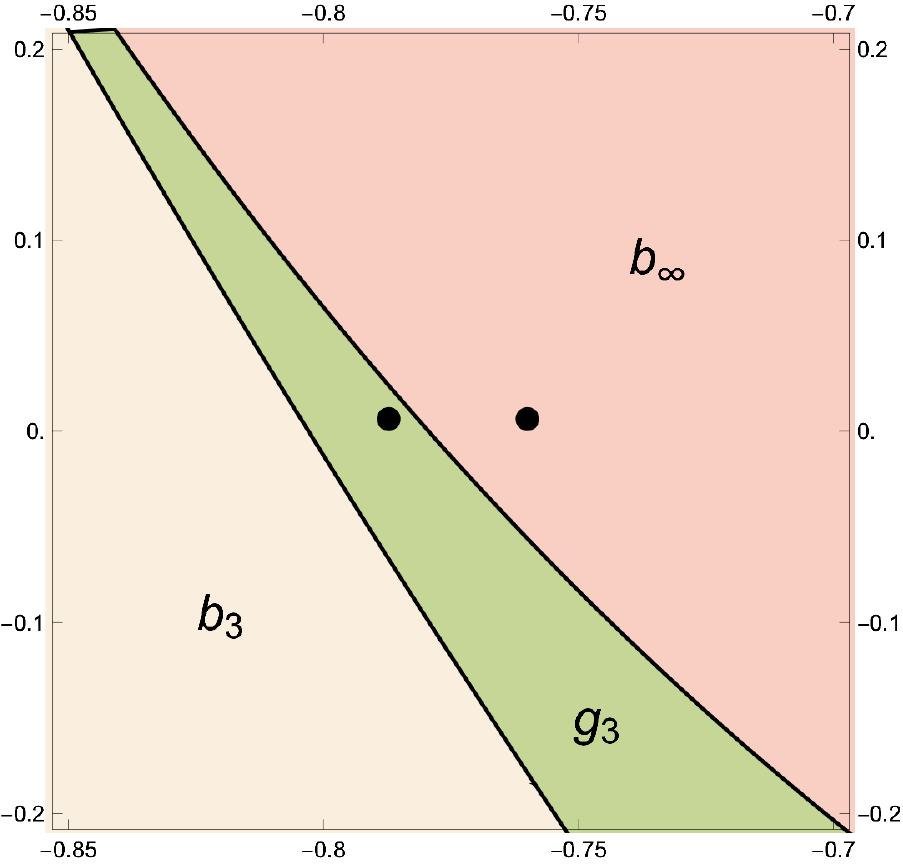}
\caption{$\triangle$ in the $(\alpha,\beta)$ plane when $\gamma=1$: $g_2$ denotes the region, for which \eqref{eqn2:Lame} is in the second gap, that is, \eqref{def2:ell} is in $(\ell_2^P,\ell_3^P)$, and $g_3$ the third gap $(\ell_3^A,\ell_4^A)$. Also $b_3$ denotes the region, for which \eqref{eqn2:Lame} is in the third band, that is, $(\ell_3^P,\ell_3^A)$, and $b_\infty$ the band $(\ell_4^A,\infty)$. 
The bullet points correspond to the values of $(\alpha,\beta)$ for Figures~\ref{FourthBandAndGapExample} and \ref{SecondGapExample}. On the right is a close-up of the values of $(\alpha,\beta)$ for Figure~\ref{FourthBandAndGapExample}. }
\label{BandsAndGaps}
\end{center}
\end{figure}

Figure~\ref{BandsAndGaps} shows bands and gaps of \eqref{eqn2:Lame} in the $(\alpha,\beta)$ plane when $\gamma=1$. Notice that not all bands and gaps are present in $\triangle$. Specifically, the zeroth and first gaps, and the first and second bands are not present. But when a band or a gap is present in $\triangle$, it is entirely contained in $\triangle$. For a proof, we observe that: 
\begin{itemize}
\item The top edge of $\triangle$, where $\beta=1$, corresponds to $m=0$, and the right edge, where $\alpha=\beta$, corresponds to $m=1$. More generally, the line segment $\frac{1-\beta}{1-\alpha}=m$ passes through $(\alpha,\beta)=(1,1)$, whose slope is $m$.
\item The left edge of $\triangle$, where $\alpha+\beta=-\frac52$, corresponds to $\frac{18}{1-\alpha}=\ell_2^P$.
\item $\frac{18}{1-\alpha}$ can be made as large as desired for $\alpha$ sufficiently close to $1$ such that $\alpha<\beta<1$, that is, in the upper right corner of $\triangle$.
\end{itemize}

More generally, for any $m\in(0,1)$ and for any $\ell\in(\ell_2^P,\infty)$, recalling $\ell_2^P=4+4m$, there exists $(\alpha,\beta,\gamma)\in\triangle$ such that
\[
\frac{\gamma-\beta}{\gamma-\alpha}=m \quad\text{and}\quad \frac{6(2\gamma+1)}{\gamma-\alpha}=\ell.
\]

We summarize our conclusion.

\begin{corollary}
Let $(\alpha,\beta,\gamma)\in\Delta$ and $\mathbf{u}$ denotes a periodic traveling wave of \eqref{eqn2:uv}, $c$ the wave speed and $T$ the period, depending on $m$ and $a$, whence depending on $\alpha$, $\beta$, $\gamma$. If 
\begin{gather*}
\frac{1}{a^2}<m-2+2\sqrt{1-m+4m^2}
\intertext{or}
1-2m-2\sqrt{4-m+m^2} <\frac{1}{a^2}<1+m-2\sqrt{4-7m+4m^2}
\end{gather*}
then the spectrum of \eqref{eqn2:spec} tends to infinity along $\pm\sigma+i\mathbb{R}$ for some $\sigma>0$.
\end{corollary}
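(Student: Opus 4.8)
The plan is to recognize the two displayed inequalities as nothing more than the analytic translation of the condition that the Hill's equation~\eqref{eqn2:Hill}, equivalently the Lam\'e equation~\eqref{eqn2:ell}, is hyperbolic, and then to invoke Theorem~\ref{thm:Bou}. That theorem already supplies the dichotomy---the spectrum of~\eqref{eqn2:spec} tends to infinity along $\pm\sigma+i\R$ with $\sigma>0$ precisely when~\eqref{eqn2:Hill} lies in a gap---and the band edges of~\eqref{eqn2:ell} are available in closed form. Thus the conceptual content is inherited from Theorem~\ref{thm:Bou} and the classical Lam\'e spectrum, and what remains is essentially bookkeeping: expressing ``$\ell$ lies in a gap'' as an inequality for $1/a^2$.

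First I would record the single identity $1/a^2=\ell-(4+4m)$, immediate from~\eqref{def2:ell}, which converts each band edge $\ell_j^{P}$, $\ell_j^{A}$ into a threshold for $1/a^2$. Next I would pin down which of the four gaps in~\eqref{def2:gaps} are actually accessible for $(\alpha,\beta,\gamma)\in\triangle$. The crucial observation is that $a$ is real and positive on $\triangle$: by~\eqref{def2:am}, $a^2=(\gamma-\alpha)/(4(\alpha+\beta+\gamma)+6)$, and on $\triangle$ both $\gamma-\alpha>0$ and $4(\alpha+\beta+\gamma)+6>0$, the latter being exactly the defining inequality $\alpha+\beta+\gamma>-\tfrac32$. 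Hence $1/a^2>0$, so $\ell>4+4m=\ell_2^P$. I would then verify that $\ell_2^P$ dominates the three smaller edges, so that the zeroth gap $(-\infty,\ell_1^P)$ and the first gap $(\ell_1^A,\ell_2^A)$ lie entirely below $\ell$ and are therefore never realized; for instance $\ell_2^A<\ell_2^P$ reduces, after clearing the square root, to $15(1-m)^2>0$, valid for $m\in(0,1)$. Consequently the only gaps $\ell$ can occupy are the second gap $(\ell_2^P,\ell_3^P)$ and the third gap $(\ell_3^A,\ell_4^A)$.

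Finally I would substitute the explicit band edges. Subtracting $4+4m$ from the edges of the second gap, $\ell_2^P<\ell<\ell_3^P$, gives $0<1/a^2<m-2+2\sqrt{1-m+4m^2}$, the left inequality being automatic; this is the first displayed condition. Performing the same subtraction on the edges of the third gap, $\ell_3^A<\ell<\ell_4^A$, yields the second displayed condition. In either case~\eqref{eqn2:Hill} is hyperbolic, and Theorem~\ref{thm:Bou} delivers the stated behavior of the spectrum at infinity, with $\sigma$ the Lyapunov exponent read off from the monodromy matrix as in~\eqref{def2:sigma}.

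The point requiring care---rather than a deep obstacle---is the accessibility step: one must confirm that on $\triangle$ the spectral parameter $\ell$ always exceeds $\ell_2^P$, so that the lower two gaps are excluded and the two stated inequalities genuinely exhaust the hyperbolic regime. Everything else is a one-line appeal to Theorem~\ref{thm:Bou} and a direct substitution of the Lam\'e band edges.
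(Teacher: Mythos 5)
Your route is exactly the paper's own: the corollary is read off from the closed-form Lam\'e band edges via the identity $\frac{1}{a^2}=\ell-(4+4m)$ from \eqref{def2:ell}, the accessibility observation that $\frac{1}{a^2}>0$ forces $\ell>\ell_2^P$ (so the zeroth and first gaps, and first and second bands, are never realized), and an appeal to Theorem~\ref{thm:Bou}. Your check that $\ell_2^A<\ell_2^P$ reduces to $15(1-m)^2>0$ is precisely the verification the paper leaves implicit, and your treatment of the second gap, giving $0<\frac{1}{a^2}<m-2+2\sqrt{1-m+4m^2}$, matches the first displayed condition.

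However, your final claim---that subtracting $4+4m$ from the third-gap edges ``yields the second displayed condition''---is false, and carrying out the computation would have shown it. From $\ell_3^A=5+2m+2\sqrt{4-m+m^2}$ and $\ell_4^A=5+5m+2\sqrt{4-7m+4m^2}$ one gets
\[
1-2m+2\sqrt{4-m+m^2}<\frac{1}{a^2}<1+m+2\sqrt{4-7m+4m^2},
\]
with \emph{plus} signs, whereas the statement displays minus signs. The displayed condition is in fact the \emph{first}-gap condition $(\ell_1^A-\ell_2^P,\ \ell_2^A-\ell_2^P)$, written with the corrected edge $\ell_1^A=5+2m-2\sqrt{4-m+m^2}$ (the paper's listing ``$5+2m-\theta_2$'' is itself missing a factor of $2$), and it is vacuous: since $(1+m)^2-4(4-7m+4m^2)=-15(1-m)^2\leq0$, its upper endpoint $1+m-2\sqrt{4-7m+4m^2}$ is $\leq 0<\frac{1}{a^2}$---by the very computation you used to exclude the first gap. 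So the printed second condition can never hold, which signals a sign typo in the corollary; the intended hypothesis, corresponding to the region $g_3$ of Figure~\ref{BandsAndGaps}, is the plus-sign inequality above. Your argument, executed correctly, proves that corrected statement and coincides with the paper's reasoning, but as written it asserts an identification of the printed inequality with the third gap that is simply not true, so the last step of your verification does not go through literally.
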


The bullet points in the regions $g_2$ and $g_3$ of Figure~\ref{BandsAndGaps}, for which \eqref{eqn2:Lame} is in the second and third gaps, correspond to the values of $\alpha$, $\beta$, $\gamma$ for the left panel of Figure~\ref{SecondGapExample} and the right panel of Figure~\ref{FourthBandAndGapExample}, respectively. The spectrum in each of the panels tends towards infinity along $\pm\sigma+i\mathbb{R}$ for some $\sigma>0$. 
The bullet points in the regions $b_3$ and $b_\infty$, for which \eqref{eqn2:Lame} is in the bands $(\ell^P_3,\ell^A_3)$ and $(\ell^A_4,\infty)$, on the other hand, correspond to the values of $\alpha$, $\beta$, $\gamma$ for the right panel of Figure~\ref{SecondGapExample} and the left panel of Figure~\ref{FourthBandAndGapExample}. The spectrum tends towards infinity along the imaginary axis. 

\section{The Benney--Luke equation}\label{sec:BL}

We turn our attention to \eqref{eqn1:BL}, where $a=0$ and, for simplicity of notation, $b=1$, that is,
\begin{equation}\label{eqn:BL}
u_{tt}-u_{xx}-u_{xxtt}+u_tu_{xx}+2u_xu_{xt}=0,
\end{equation}
whose dispersion relation
\[
\omega^2(k)=\frac{k^2}{1+k^2}
\]
remains bounded for all $k\in\mathbb{R}$. Proceeding as in \cite{AnguloQuintero2007,PegoQuintero1999}, let 
\begin{equation}\label{def3:vw}
v=u_x\quad\text{and}\quad w=(1-\partial_{x}^2)u_t+\frac12v^2,
\end{equation}
and we rewrite \eqref{eqn:BL} as 
\begin{equation}\label{eqn3:vw}
\mqty(v \\ w)_t=\mqty( (1-\partial_x^2)^{-1}\qty(w-\frac12v^2)\\ 
v-v(1-\partial_x^2)^{-1}\qty(w-\frac12v^2))_x.
\end{equation}
Throughout the section, $\mathbf{v}=\mqty(v\\w)$. We remark that \eqref{eqn3:vw} is in the  Hamiltonian form, for which 
\[
H(\mathbf{v})=\frac{1}{2}\int \left(v^2+\qty(w-\frac{1}{2}v^2)(1-\partial_x^2)^{-1}\qty(w-\frac{1}{2}v^2)\right)~\dd{x}
\]
is the Hamiltonian. In addition to $H$, \eqref{eqn3:vw} has three conserved quantities 
\[
P(\mathbf{v})=\int vw~\dd{x}, \quad
M_1(\mathbf{v})=\int v~\dd{x} \quad \text{and} \quad M_2(\mathbf{v})=\int w~\dd{x}. 
\]

\subsection{Parametrization of periodic traveling waves}\label{sec3:periodic}

Similarly as in Section~\ref{sec2:periodic}, a traveling wave of \eqref{eqn3:vw} takes the form $\mathbf{v}(x-ct-x_0)$ for some $c\neq0,\in\mathbb{R}$ for some $x_0\in\mathbb{R}$, and it satisfies 
\[
\delta(H+cP+b_1M_1+b_2M_2)({\mathbf v})=\mathbf{0} 
\]
for some $b_1,b_2\in\mathbb{R}$. That is,
\begin{equation}\label{eqn3:vw0}
\begin{aligned}
&v-v(1-\partial_x^2)^{-1}\qty(w-\frac12v^2)+cw+b_1=0, \\
&(1-\partial_x^2)^{-1}\qty(w-\frac12v^2)+cv+b_2=0. 
\end{aligned}
\end{equation}
We restrict our attention to periodic solutions of \eqref{eqn3:vw0}. The first equation of \eqref{def3:vw} implies that $v$ has mean zero over one period. Also the second equation of \eqref{def3:vw} implies that $w-\frac12v^2=-c(1-\partial_x^2)u_x$ has mean zero over the period. Therefore the second equation of \eqref{eqn3:vw0} dictates that $b_2=0$. 
In what follows, we drop the subscript and refer to $b_1$ as $b$ for simplicity of notation. Eliminating $w$ from \eqref{eqn3:vw0}, we arrive at
\begin{equation}\label{eqn3:v0}
c^2v''+\frac32cv^2+(1-c^2)v+b=0.    
\end{equation}
Multiplying \eqref{eqn3:v0} through by $v'$ and integrating, moreover,
\begin{equation}\label{eqn3:E-V}
\frac12c^2(v')^2=E-\frac12cv^3-\frac12(1-c^2)v^2-bv=:E-V(v;c,b)
\end{equation}
for some $E\in\mathbb{R}$. Since \eqref{eqn3:E-V} remains invariant under 
\[
v\mapsto-v \quad\text{and}\quad (c,b,E)\mapsto(-c,-b,E),
\]
we can take $c>0$ without loss of generality. Since \eqref{eqn3:E-V} remains invariant under the translation of the $x$ axis, we can mod out $x_0$. 

One can work out the existence of non-constant periodic solutions of \eqref{eqn3:E-V} and, hence, non-constant periodic traveling waves of \eqref{eqn3:vw}, depending on $c$, $b$, $E$,  or depending on the roots of the cubic polynomial $E-V(v;c,b)$. But it is inconvenient to impose that $v$ has mean zero over the period when one parametrizes the solutions by the roots of $E-V(v;c,b)$. Instead we restrict our attention to $c>0$ and give periodic and mean-zero solutions of \eqref{eqn3:v0} in closed form in terms of the Jacobi elliptic functions: 
\begin{equation}\label{def3:sn}
v(x)=\frac{4ma^2}{\sqrt{1+4(1+m-3mM(m))a^2}}(\sn^2(ax,\sqrt{m})-M(m)),
\end{equation}
where
\begin{equation}\label{def3:M}
M(m) \vcentcolon =\frac{K(\sqrt{m})-E(\sqrt{m})}{mK(\sqrt{m})}.
\end{equation}
Here $m\in(0,1)$ is the elliptic parameter, and
\[
K(\sqrt{m})=\int_0^1\frac{\dd{s}}{\sqrt{(1-s^2)(1-ms^2)}}\quad\text{and}\quad
E(\sqrt{m})=\int_0^1\sqrt{\frac{1-ms^2}{1-s^2}}~\dd{s}   
\]
are the complete elliptic integrals of the first and second kinds (not to be confused with $E$ in \eqref{eqn3:E-V}). Particularly, \eqref{def3:sn} has mean zero over the period 
\begin{equation}\label{def3:T}
T=\frac{2K(\sqrt{m})}{a}.
\end{equation}
A straightforward calculation reveals that 
\begin{align}
c=&\frac1{\sqrt{1+4(1+m-3mM(m))a^2}}~(>0),\label{def3:c}\\
b=&\frac{8(1 - 2(1+m)M(m) + 3 m M^2(m))ma^4}{\sqrt{1+4(1+m-3mM(m))a^2}^3}, \notag \\
E=&\frac{32(1-M(m))(1-mM(m))m^2M(m)a^6}{(1+4(1+m-3mM(m))a^2)^2}.\notag 
\end{align}
We take $a>0$ without loss of generality. If $m\leq m_0$, where 
\begin{equation}\label{def3:m0}
\text{$m_0\approx0.961$ is the unique root of $1+m-3mM(m)$},
\end{equation}
then \eqref{def3:sn} is defined for all $a$. If $m>m_0$, on the other hand, then $a<\frac{1}{2\sqrt{3mM(m)-m-1}}$ must hold true. 

To summarize, whenever 
\begin{equation}\label{def3:square}
(m,a)\in\square:=\bigg\{(m,a)\in (0,1)\times (0,\infty): 
a<\frac{1}{2\sqrt{3mM(m)-m-1}}\quad\text{for $m<m_0$} \bigg\},
\end{equation}
where $m_0$ is in \eqref{def3:m0}, \eqref{def3:sn} and \eqref{def3:M} give a periodic traveling wave of \eqref{eqn3:vw}, depending on $m$ and $a$, and $v$ has mean zero over the period.  
Figure~\ref{fig:BLBand_Gap} shows $\square$ in the $(m,a)$ plane.

\subsection{Asymptotic spectral analysis to short wavelength perturbations}\label{sec3:spec}

Let $(m,a)\in\square$ (see \eqref{def3:square}) and $\mathbf{v}$ denotes a periodic traveling wave of \eqref{eqn3:vw} (see \eqref{eqn3:vw0}, \eqref{def3:sn}, \eqref{def3:M}), $c$ the wave speed (see \eqref{def3:c}) and $T$ the period (see \eqref{def3:T}), depending on $m$ and $a$. Linearizing \eqref{eqn3:vw} about $\mathbf{v}$ in the moving frame of reference, and seeking a solution of the form $e^{\lambda t}\boldsymbol{\phi}(x)$, say, where $\lambda\in\mathbb{C}$, we arrive at 
\begin{equation}\label{eqn3:spec}
\lambda\boldsymbol{\phi}=
\mqty(\partial_x(c-(1-\partial_x^2)^{-1}v) & \partial_x(1-\partial_x^2)^{-1} \\
\partial_x(1+cv+v(1-\partial_x^2)^{-1}v) & \partial_x(c-(1-\partial_x^2)^{-1}v) )\boldsymbol{\phi}
=:\mathbf{L}(\mathbf{v})\boldsymbol{\phi}.
\end{equation}
Similarly as in Section~\ref{sec2:spec}, $\mathbf{v}$ is spectrally stable if and only if the spectrum of $\mathbf{L}(\mathbf{v}): H^1(\mathbb{R})\times H^1(\mathbb{R})\subset L^2(\mathbb{R})\times L^2(\mathbb{R})\to L^2(\mathbb{R})\times L^2(\mathbb{R})$ is contained in the imaginary axis. Similarly as in Section~\ref{sec2:spec}, $\lambda$ is in the spectrum of $\mathbf{L}(\mathbf{v})$ if and only if \eqref{eqn3:spec} has a nontrivial bounded solution such that 
\[
\boldsymbol{\phi}(x+T)=e^{\frac{2\pi ikx}{T}}\boldsymbol{\phi}(x) \quad\text{for some $k\in\mathbb{R}$}.
\]
We focus our attention to $|k|\gg1$.

Similarly as in Section~\ref{sec2:spec}, let
\[
\lambda=\lambda^{(1)}k+\lambda^{(0)}+\lambda^{(-1)}k^{-1}+\cdots\quad\text{and}\quad
\boldsymbol{\phi}(x)=e^{\frac{2\pi ikx}{T}}
(\bm{\phi}^{(0)}(x)+\bm{\phi}^{(-1)}(x)k^{-1}+\bm{\phi}^{(-2)}(x)k^{-2}+\cdots)
\]
as $|k|\to\infty$ for some $\lambda^{(1)}$, $\lambda^{(0)}$, $\lambda^{(-1)},\ldots\in\mathbb{C}$ for some $\boldsymbol{\phi}^{(0)}$, $\boldsymbol{\phi}^{(-1)}$, $\boldsymbol{\phi}^{(-2)},\ldots\in L^\infty(\mathbb{R})\times L^\infty(\mathbb{R})$, so that \eqref{eqn3:spec} becomes
\begin{multline*}
(\lambda^{(1)}k+\lambda^{(0)}+\lambda^{(-1)}k^{-1}+\cdots)
(\bm{\phi}^{(0)}+\bm{\phi}^{(-1)}k^{-1}+\bm{\phi}^{(-2)}k^{-2}+\cdots)\\
=\mqty((\frac{2\pi ik}{T}+\partial_x)(c-(1-\qty(\frac{2\pi i k}{T}+\partial_x)^2)^{-1} v) & 
(\frac{2\pi i k}{T}+\partial_x)(1-\qty(\frac{2\pi i k}{T}+\partial_x)^2)^{-1} \\
(\frac{2\pi i k}{T}+\partial_x)(1+c v+v(1-\qty(\frac{2\pi i k}{T}+\partial_x)^2)^{-1}v) & 
(\frac{2\pi i k}{T}+\partial_x)(c-(1-\qty(\frac{2\pi i k}{T}+\partial_x)^2)^{-1}v))\\
\cdot(\bm{\phi}^{(0)}+\bm{\phi}^{(-1)}k^{-1}+\bm{\phi}^{(-2)}k^{-2}+\cdots)
\end{multline*}
as $|k|\to\infty$, where
\[
\bigg(1-\qty(\frac{2\pi ik}{T}+\partial_x)^2\bigg)^{-1}=\frac{T^2}{4\pi^2k^2}+O(k^{-3})\quad \text{as $|k|\to\infty$}.
\]

At the order of $k$, 
we gather
\[
\lambda^{(1)}\boldsymbol{\phi}^{(0)}=
\mqty(\frac{2\pi ic}{T} & 0 \\ \frac{2\pi i}{T}(1+cv) & \frac{2\pi ic}{T})\boldsymbol{\phi}^{(0)},
\]
whence 
\[
\lambda^{(1)}=\frac{2\pi i c}{T} \quad\text{and}\quad \boldsymbol{\phi}^{(0)}=\mqty(0 \\ \phi_{2}^{(0)}),
\]
where $\phi_2^{(0)}$ is bounded and otherwise arbitrary. 
At the order of $1$, we gather
\[
\lambda^{(1)} \bm{\phi}^{(-1)}+\lambda^{(0)} \bm{\phi}^{(0)}=
\mqty(\frac{2\pi i c}{T} & 0 \\ \frac{2\pi i}{T}(1+c v) & \frac{2\pi i c}{T} )\bm{\phi}^{(-1)}+\mqty(c\partial_x & 0 \\ \partial_x(1+c v) & c\partial_x)\bm{\phi}^{(0)} 
\]
or, equivalently, 
\[
\mqty(0 & 0 \\ \frac{2\pi i}{T}(1+cv) & 0)\bm{\phi}^{(-1)}
=\mqty(0 \\ (\lambda^{(0)}-c\partial_x)\phi^{(0)}_{2}),
\]
whence
\[
\frac{2\pi i}{T}(1+cv)\phi_1^{(-1)}=(\lambda^{(0)}-c\partial_x)\phi_2^{(0)}.
\]
At the order of $k^{-1}$, similarly,
\begin{multline*}
\lambda^{(1)}\bm{\phi}^{(-2)}+\lambda^{(0)}\bm{\phi}^{(-1)}+\lambda^{(-1)}\bm{\phi}^{(0)} \\
=\mqty(\frac{2\pi i c}{T} & 0 \\ \frac{2\pi i}{T}(1+cv) & \frac{2\pi i c}{T} )\bm{\phi}^{(-2)}
+\mqty(c\partial_x & 0 \\ \partial_x(1+cv) & c\partial_x)\bm{\phi}^{(-1)}
+\mqty(-\frac{T}{2\pi i} v & \frac{T}{2\pi i} \\ \frac{T}{2\pi i} v^2 & -\frac{T}{2\pi i} v)\bm{\phi}^{(0)}
\end{multline*}
or, equivalently, 
\[
\mqty(0 & 0 \\ \frac{2\pi i}{T}(1+cv) & 0)\bm{\phi}^{(-2)} =  \mqty((\lambda^{(0)}-c\partial_x)\phi^{(-1)}_1-\frac{T}{2\pi i}\phi^{(0)}_2 \\
-\partial_x((1+cv)\phi_{1}^{(-1)})+(\lambda^{(0)}-c\partial_x)\phi^{(-1)}_{2}
+(\lambda^{(-1)}+\frac{T}{2\pi i} v) \phi^{(0)}_{2}),
\]
which is solvable by the Fredholm alternative, provided that 
\[
(\lambda^{(0)}-c\partial_x)\phi^{(-1)}_{1}-\frac{T}{2\pi i}\phi^{(0)}_{2}=0.
\]
Therefore 
\begin{equation}\label{eqn3:pencil}
((\lambda^{(0)}-c\partial_x)^2-(1+cv))\phi^{(-1)}_{1}=0.
\end{equation}
Similarly as in Section~\ref{sec2:spec}, if the Hill's differential equation
\begin{equation}\label{eqn3:Hill}
c^2y''-(1+cv)y=0
\end{equation}
is elliptic (in a band), so that \eqref{eqn3:pencil} has a bounded solution if and only if $\text{Re}(\lambda^{(0)})=0$, then the spectrum of \eqref{eqn3:spec} tends to infinity along the imaginary axis. If \eqref{eqn3:Hill} is hyperbolic (in a gap), on the other hand, so that \eqref{eqn3:pencil} has a bounded solution if and only if $\text{Re}(\lambda^{(0)})=\pm\sigma$, where $\sigma>0$ is the Lyapunov exponent of \eqref{eqn3:Hill}, then the spectrum tends to infinity along $\pm\sigma+i\mathbb{R}$. 

Similarly as in Section~\ref{sec2:Lame}, after the change of variables $x\mapsto\frac{x}{a}$, we can rewrite \eqref{eqn3:Hill} as the Lam\'e equation 
\begin{subequations}\label{eqn3:Lame}
\begin{equation}\label{eqn3:ell}
y''-4 m \sn^2(x,\sqrt{m}))y=\ell y,
\end{equation}
where 
\begin{equation}\label{def3:ell}
\ell=\frac{1}{a^2}+4(1+m-4mM(m))
\end{equation}
\end{subequations}
by \eqref{def3:sn} and \eqref{def3:c}. Let $\{\ell_j^P\}_{j=1}^\infty$ denote the periodic eigenvalues of \eqref{eqn3:ell} and $\{\ell_j^P\}_{j=1}^\infty$ the anti-periodic eigenvalues such that
\[
-\infty<\ell_1^P<\ell_1^A\leq\ell_2^A<\ell_2^P\leq\ell_3^P<\ell_3^A\leq\ell_4^A<\cdots.
\]
If \eqref{eqn3:Lame} lies in a band, that is, \eqref{def3:ell} is in
\[
(\ell_1^P,\ell_1^A)\cup (\ell_2^A,\ell_2^P)\cup (\ell_3^P,\ell_3^A)\cup \cdots,
\]
then the spectrum of \eqref{eqn3:spec} tends to infinity along the imaginary axis. If \eqref{eqn3:Lame} is in a gap, that is, 
\[
(-\infty,\ell_1^P)\cup (\ell_1^A,\ell_2^A)\cup (\ell_2^P,\ell_3^P) \cup \cdots,
\]
on the other hand, then the spectrum tends to infinity along $\pm\sigma+i\mathbb{R}$ for some $\sigma>0$. We refer to $(-\infty,\ell_1^P)$ as the zeroth gap for convenience. 

But one difference with Section~\ref{sec2:Lame} is that \eqref{eqn3:ell} is not a finite-gap Lam\'e equation, whereby analytical formulae of the eigenvalues seem not viable to use. Nevertheless we can compute the band edges numerically. 
Of practical usefulness to this end is \cite{Kiper}
\[
\sn^2(x,\sqrt{m})=M(m)-\frac{2\pi^2}{mK^2(\sqrt{m}))}
\sum_{k=1}^\infty\frac{kq^k}{1-q^{2k}}\cos(\frac{\pi kx}{K(\sqrt{m})}),
\]
where
\[
q=e^{-\frac{\pi K'(\sqrt{m})}{K(\sqrt{m})}},\quad
K'(\sqrt{m})=K(\sqrt{m'})\quad\text{and}\quad m'=1-m.
\]
Suppose that
\[
y(x)=\sum_{k\in\mathbb{Z}}\widehat{y}_k e^\frac{\pi ikx}{K},\quad
\text{$\widehat{y}_k$ are the Fourier coefficients},
\]
and we consider \eqref{eqn3:Lame} in the basis $\big\{e^\frac{\pi ikx}{K}: k\in\mathbb{Z}\big\}$ of $L^2(-K,K)$ subject to the periodic boundary condition. We make the Fourier collocation projection of \eqref{eqn3:Lame} to the subspace spanned by $\{e^\frac{\pi ikx}{K}:k\in(-N_k,N_k)\}$ (here $N_k=50$) and solve numerically the resulting eigenvalue problem for a diagonal matrix plus a Toepliz matrix, approximating numerically the periodic eigenvalues. We take the basis $\big\{e^{\frac{\pi ikx}{K}+\frac{\pi ix}{2K}}: k\in\mathbb{Z}\big\}$ of $L^2(-K,K)$ subject to $y(K)=-y(-K)$ for the anti-periodic eigenvalues. 

\begin{figure}[htbp]
\begin{center}
\includegraphics[width=0.495\textwidth]{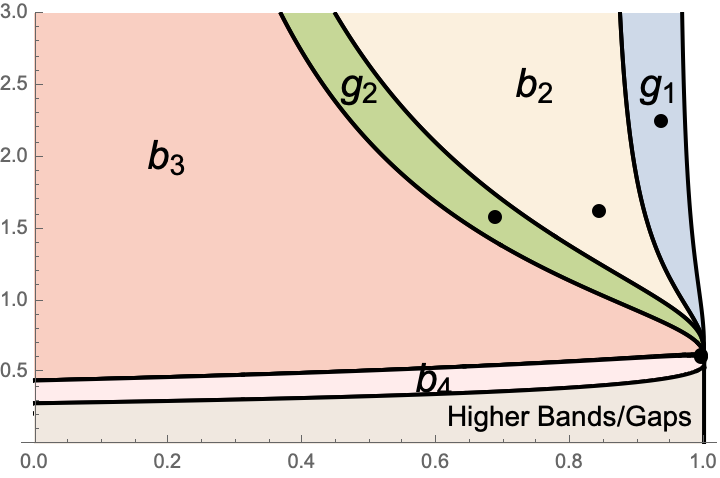}
\includegraphics[width=0.495\textwidth]{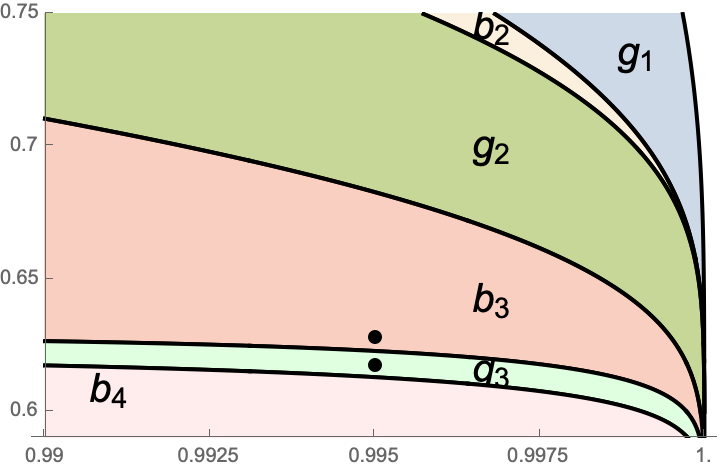}
\caption{$\square$ in the $(m,a)$ plane: $g_j$, $j\geq1,\in\mathbb{Z}$, denotes the region, for which \eqref{eqn3:Lame} is in the $j$-th gap, that is, \eqref{def3:ell} is in $(\ell_j^P, \ell_{j+1}^P)$ for $j$ even and $(\ell_j^A, \ell_{j+1}^A)$ for $j$ odd. Also $b_j$, $j\geq 2,\in\mathbb{Z}$, denotes the $j$-th band, that is, $(\ell_j^A,\ell_j^P)$ for $j$ even and $(\ell_j^P, \ell_j^A)$ for $j$ odd. The $j$-th gaps, $j\geq 4,\in\mathbb{Z}$, and the $j$-th bands, $j\geq 5,\in\mathbb{Z}$, are in the region labelled `higher bands and gaps'. 
The bullet points correspond to the values of $(m,a)$ for Figures~\ref{fig:Benney-Luke_Spine_g1}-\ref{fig:Benney-Luke_Spine_g3}. On the right is a close-up 
for the values of $(m,a)$ for Figures \ref{fig:Benney-Luke_Spine_b3} and \ref{fig:Benney-Luke_Spine_g3}.} 
\label{fig:BLBand_Gap}
\end{center}
\end{figure} 

Figure~\ref{fig:BLBand_Gap} shows bands and gaps of \eqref{eqn3:Lame} in the $(m,a)$ plane. Similarly to Section~\ref{sec2:Lame}, not all bands and gaps are present in $\square$. Specifically, the zeroth gap and the first band are not present. We expect that \eqref{eqn3:ell} has countably many gaps and all are open. There is no proof, however, to the best of the authors' knowledge. Figure~\ref{fig:BLBand_Gap} shows the $j$-th gaps, $j=1$, $2$, $3$, and the $j$-th bands, $j=2$, $3$, $4$. The $j$-th gaps, $j\geq4,\in\mathbb{Z}$, in the region labelled `higher bands and gaps', are exceedingly narrow in the $(m,a)$ plane, for which the spectrum of \eqref{eqn3:spec} would tend to infinity along $\pm\sigma+i\mathbb{R}$ for $\sigma$ exceedingly small. Such instability would not be of profound physical significance. 

We summarize our conclusion.

\begin{theorem}\label{thm:BL}
Let $(m,a)\in\square$ and $\mathbf{v}$ denotes a periodic traveling wave of \eqref{eqn3:vw}, $c$ the wave speed and $T$ the period, depending on $m$ and $a$. Let $\{\ell^P_j\}_{j=1}^\infty$ denote the periodic eigenvalues of \eqref{eqn3:ell} and $\{\ell^A_j\}_{j=1}^\infty$ the anti-periodic eigenvalues. If 
\begin{align*}
&\frac{1}{\sqrt{\ell^A_{2j}-4(1+m-4mM(m))}}<a<\frac{1}{\sqrt{\ell^A_{2j-1}-4(1+m-4mM(m))}}&& \text{for $j\geq2,\in\mathbb{Z}$},
\intertext{or}
&\frac{1}{\sqrt{\ell^P_{2j+1}-4(1+m-4mM(m))}}<a<\frac{1}{\sqrt{\ell^P_{2j}-4(1+m-4mM(m))}}&& \text{for $j\geq1,\in\mathbb{Z}$},
\end{align*}
so that \eqref{eqn3:Lame} is in a gap, then the spectrum of \eqref{eqn3:spec} tends to infinity along $\pm\sigma+i\mathbb{R}$ for some $\sigma>0$.
\end{theorem}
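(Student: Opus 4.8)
The statement records, in closed form in the variable $a$, the band/gap dichotomy already established for \eqref{eqn3:spec}. The plan is therefore not to repeat the short-wavelength analysis but to invert the relation between the Lam\'e parameter $\ell$ and $a$ and to read off the gap intervals.

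First I would invoke the reduction carried out just before the statement: for $|k|\gg1$ the leading-order pencil \eqref{eqn3:pencil} possesses a bounded solution with $\Re(\lambda^{(0)})\neq0$ if and only if the Hill equation \eqref{eqn3:Hill}, equivalently the Lam\'e equation \eqref{eqn3:Lame}, is hyperbolic, in which case $\Re(\lambda^{(0)})=\pm\sigma$ with $\sigma>0$ the Lyapunov exponent; hence the spectrum of \eqref{eqn3:spec} tends to infinity along $\pm\sigma+i\mathbb{R}$. It then remains to translate ``\,\eqref{def3:ell} lies in a gap of \eqref{eqn3:ell}\,'' into a condition on $a$. By the spectral structure stated before the theorem, the gaps are $(\ell_j^A,\ell_{j+1}^A)$ for $j$ odd and $(\ell_j^P,\ell_{j+1}^P)$ for $j$ even, together with the semi-infinite zeroth gap $(-\infty,\ell_1^P)$.

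Next I would invert \eqref{def3:ell}. Writing $A:=4(1+m-4mM(m))$, one has $\ell-A=a^{-2}>0$, so the map $a\mapsto\ell=a^{-2}+A$ is a strictly decreasing bijection from the accessible $a$-interval onto $(\ell_{\min},\infty)$, with inverse $a=(\ell-A)^{-1/2}$. A short computation from \eqref{def3:square}, \eqref{def3:m0} and \eqref{def3:c} shows that $a$ sweeps $(0,\infty)$ for $m\leq m_0$ and $(0,a_{\max})$ for $m>m_0$, where $a_{\max}=\frac{1}{2\sqrt{3mM(m)-m-1}}$, giving $\ell_{\min}=A$ in the first case and $\ell_{\min}=4\big(3mM(m)-m-1\big)+A>A$ in the second. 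Because the map reverses order, any gap $(\ell_-,\ell_+)$ contained in $(\ell_{\min},\infty)$ pulls back to the $a$-interval $\big(1/\sqrt{\ell_+-A},\,1/\sqrt{\ell_--A}\big)$, whose endpoints are real since $\ell_\pm>\ell_{\min}\geq A$. Substituting $(\ell_-,\ell_+)=(\ell^A_{2j-1},\ell^A_{2j})$ for the odd gaps and $(\ell^P_{2j},\ell^P_{2j+1})$ for the even gaps yields precisely the two displayed families, and $\sigma>0$ is automatic from hyperbolicity.

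The step I expect to be the genuine obstacle is the endpoint bookkeeping that fixes which gaps the formulas enumerate. Since $\ell$ only fills $(\ell_{\min},\infty)$ with $\ell_{\min}\geq A$, the zeroth gap $(-\infty,\ell_1^P)$ is never reached, and the first gap $(\ell_1^A,\ell_2^A)$ is met only in its upper portion once $\ell_{\min}\geq\ell_1^A$; then the would-be lower endpoint $1/\sqrt{\ell_1^A-A}$ is not attained and the corresponding $a$-set degenerates to a half-line. This is exactly the content of the observation that the zeroth gap and first band are absent from $\square$, and it explains why the $A$-type inequality is stated only for $j\geq2$ while the zeroth and first gaps are excluded. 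Verifying $\ell_{\min}\geq\ell_1^A$ and, more generally, that each retained gap $(\ell^P_2,\ell^P_3),(\ell^A_3,\ell^A_4),\dots$ sits entirely above $\ell_{\min}$ is the real work; unlike the three-gap case of Section~\ref{sec2:Lame}, the band edges of \eqref{eqn3:ell} are not available in closed form, so this containment must be checked from the numerically computed edges rather than from explicit polynomial roots.
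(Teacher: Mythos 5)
Your proposal is correct and takes essentially the same route as the paper: the paper offers no separate argument for Theorem~\ref{thm:BL} beyond the preceding short-wavelength reduction to the Hill equation \eqref{eqn3:Hill} and Lam\'e equation \eqref{eqn3:Lame}, followed by the (order-reversing) inversion of \eqref{def3:ell} that turns gap membership into the displayed intervals in $a$, which is exactly what you do. Your closing discussion of the endpoint bookkeeping---why the anti-periodic family starts at $j\geq2$ and why the zeroth gap and first band never occur---is sound but goes slightly beyond what the conditional statement strictly requires, since the hypotheses already place $\ell$ in a gap; as you note, the paper settles those containment questions only numerically.
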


\subsection{Numerical experiments}\label{sec3:nuemrics}

Similarly as in Section~\ref{sec2:numerics}, we compute the spectrum of \eqref{eqn3:spec} numerically by a Fourier spectral method. One minor difference is that we integrate \eqref{eqn3:v0} numerically and approximate the Fourier coefficients by numerical quadrature, without recourse to an analytic formula for the Jacobi elliptic function. Similarly as in Section~\ref{sec2:numerics}, when \eqref{eqn3:Hill} is in a gap, we integrate \eqref{eqn3:Hill} numerically and approximate numerically the eigenvalues of the monodromy matrix. 

\begin{figure}[htbp]
\begin{center}
\includegraphics[width=0.495\textwidth]{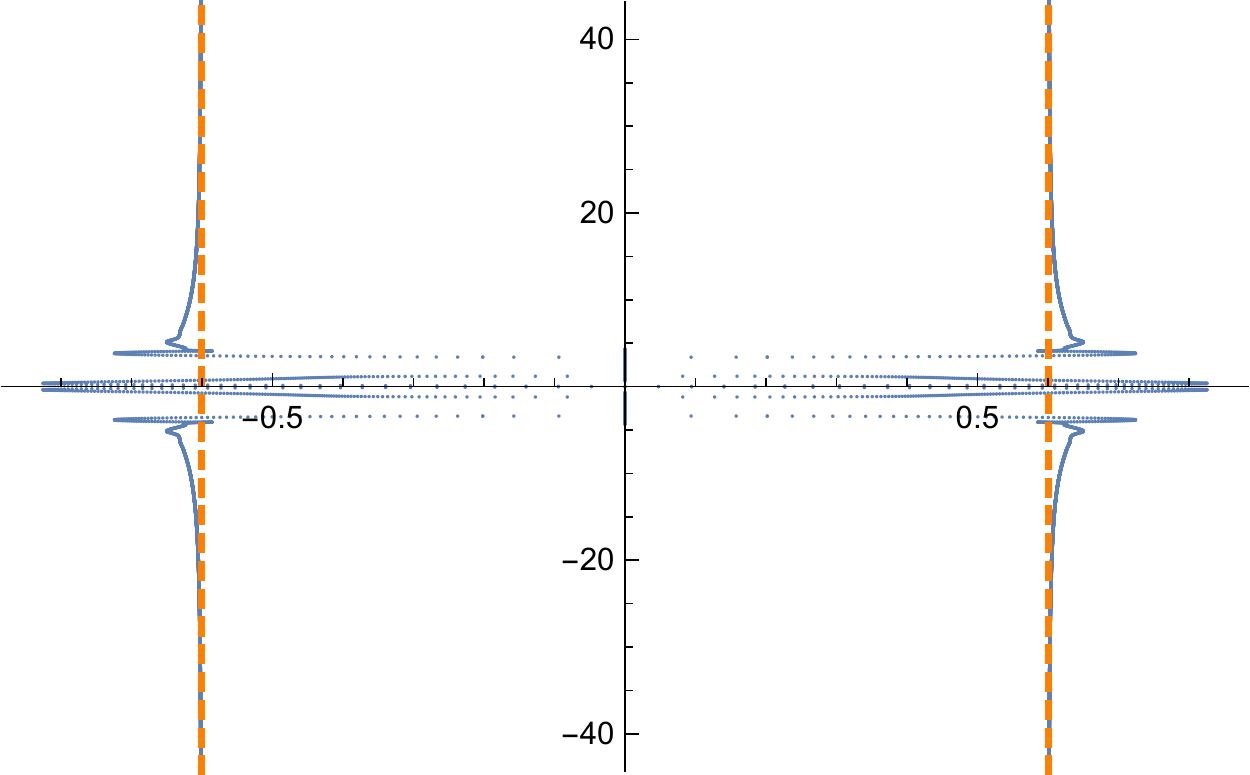}
\includegraphics[width=0.495\textwidth]{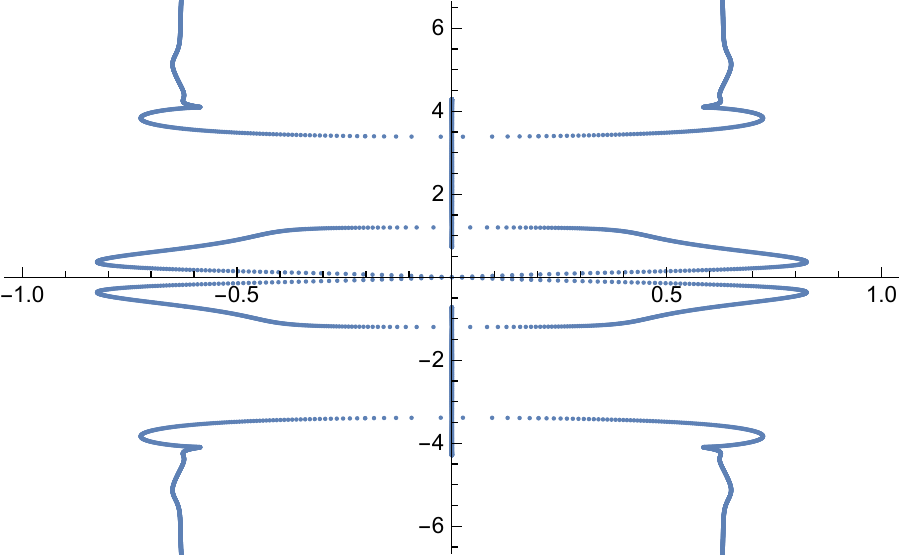}
\caption{The numerically computed spectrum of \eqref{eqn3:spec}, where $v$, $c$, $T$ depend on $m=0.9342$ and $a=2.25$, for which \eqref{eqn3:Lame} is in the first gap. The spectrum tends towards infinity along the dashed lines $\pm\sigma+i\mathbb{R}$, where $\sigma\approx0.600755$. On the right is a close-up for modulational instability near $0\in\mathbb{C}$.}
\label{fig:Benney-Luke_Spine_g1}
\end{center}
\end{figure}

\begin{figure}[htbp]
\begin{center}
\includegraphics[width=0.495\textwidth]{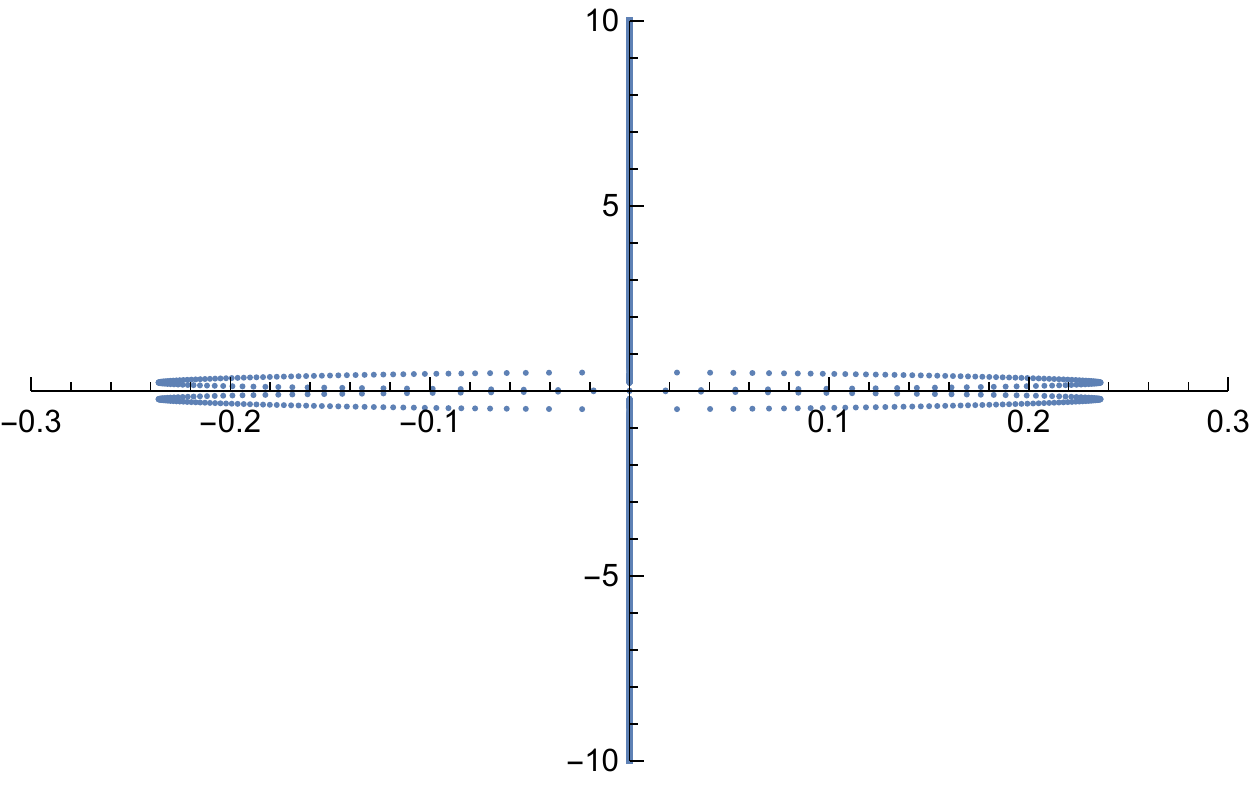}
\includegraphics[width=0.495\textwidth]{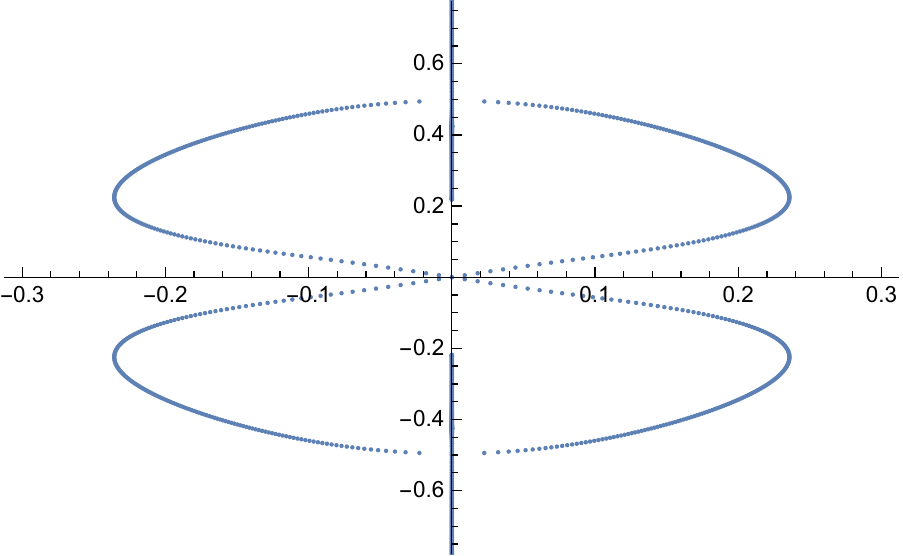}
\caption{The spectrum of \eqref{eqn3:spec} for $m=0.8428$ and $a=1.621$, for which \eqref{eqn3:Lame} is in the second band. The spectrum lies along the imaginary axis far away from $0\in\mathbb{C}$. On the right is a close-up for modulational instability. }
\label{fig:Benney-Luke_Spine_b2}
\end{center}
\end{figure}

\begin{figure}[htbp]
\begin{center}
\includegraphics[width=0.495\textwidth]{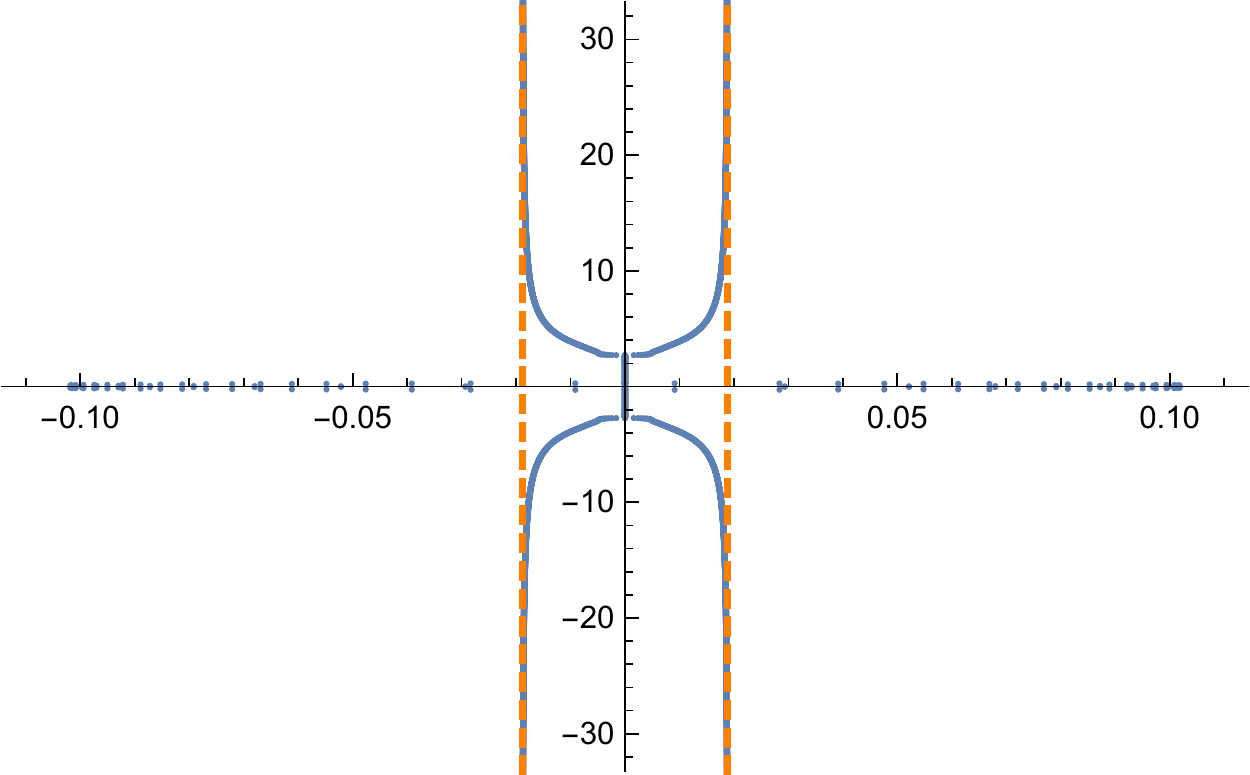}
\includegraphics[width=0.495\textwidth]{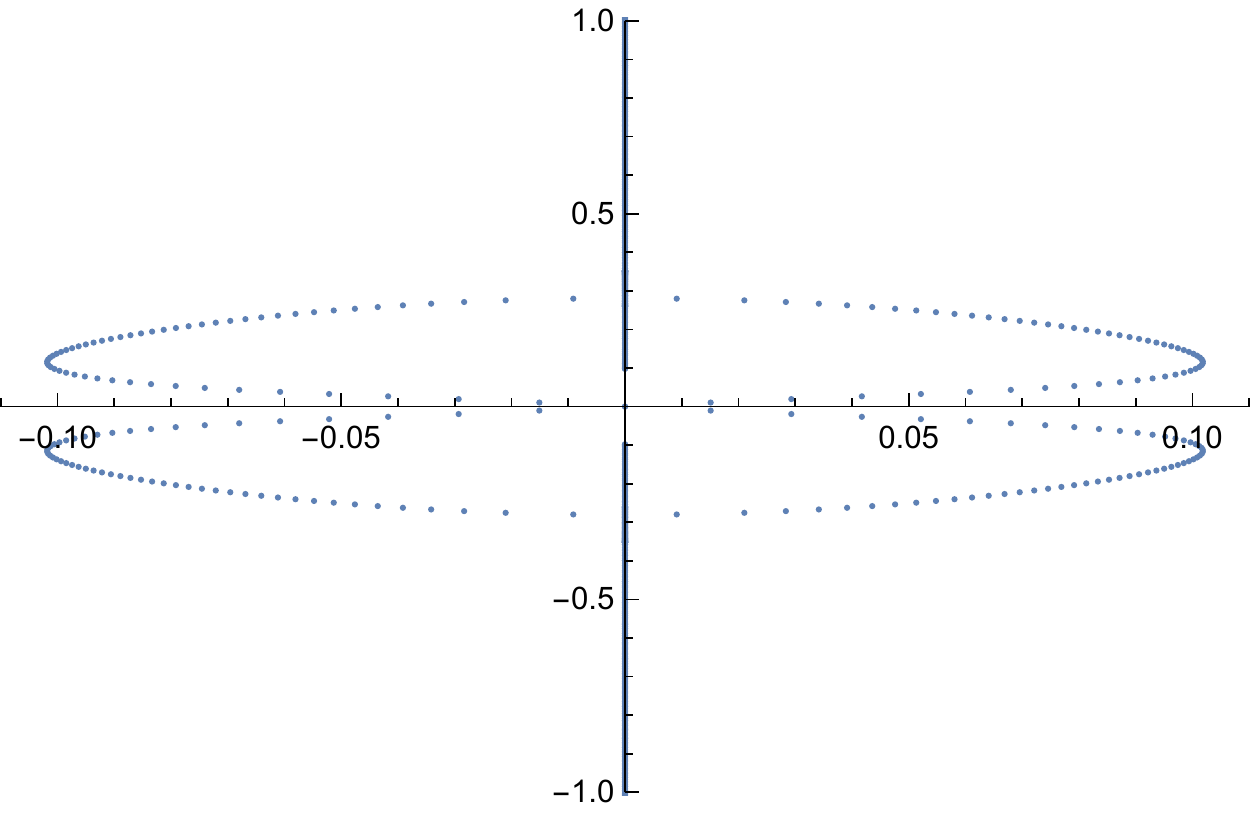}
\caption{The spectrum of \eqref{eqn3:spec} for $m=0.6872$ and $a=1.584$, for which \eqref{eqn3:Lame} is in the second gap. The spectrum tends towards infinity along the dashed lines $\pm\sigma+i\mathbb{R}$, where $\sigma\approx0.0188322$. On the right is a close up for modulational instability near $0\in\mathbb{C}$.} 
\label{fig:Benney-Luke_Spine_g2}
\end{center}
\end{figure}

\begin{figure}
\begin{center}
\includegraphics[width=0.495\textwidth]{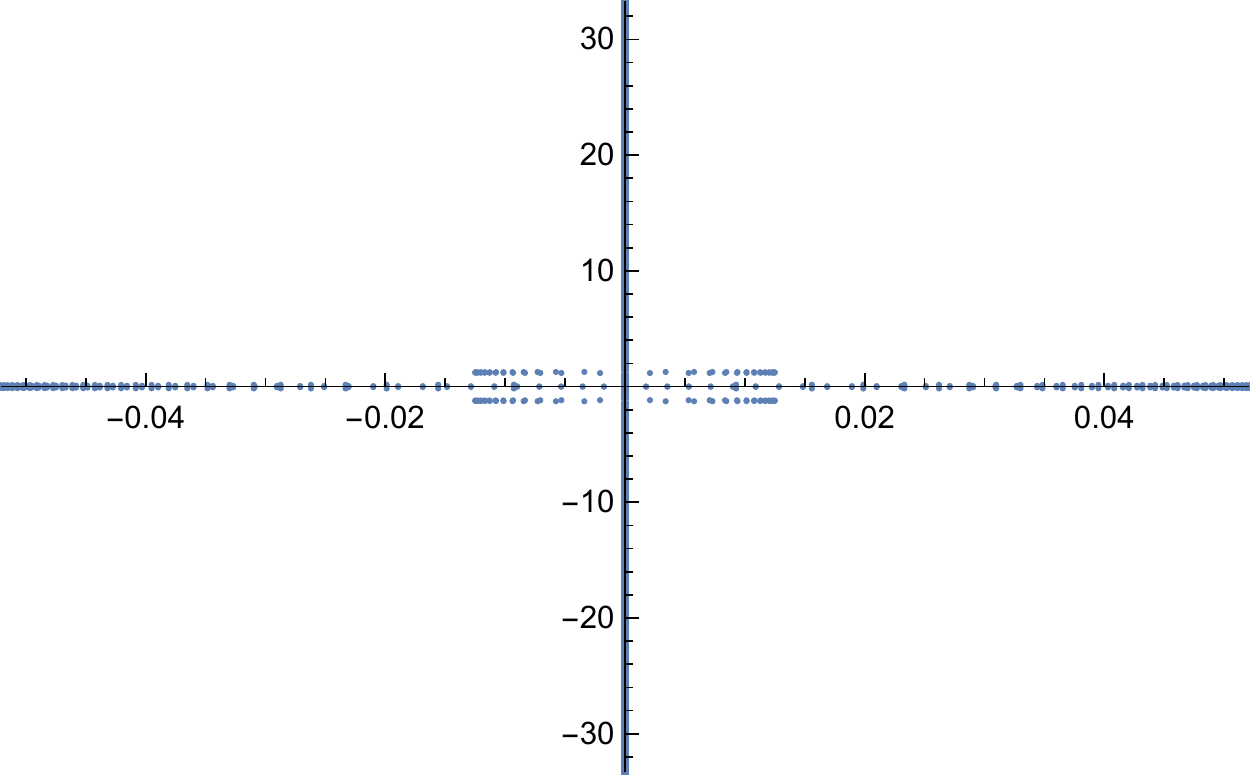}
\includegraphics[width=0.495\textwidth]{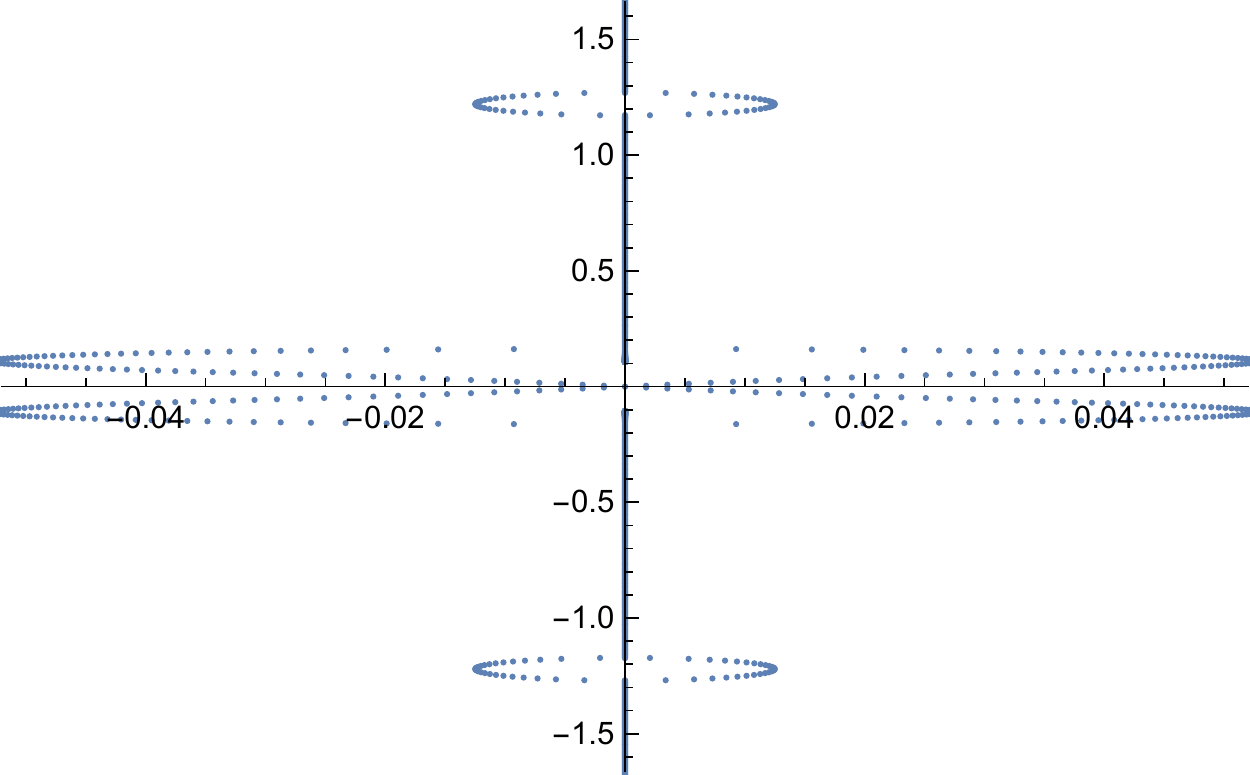}
\caption{The spectrum of \eqref{eqn3:spec} for $m=0.995$ and $a=0.628$, for which \eqref{eqn3:Lame} is in the third band. The spectrum lies along the imaginary axis far away from $0\in\mathbb{C}$. On the right is a close up for modulational instability near $0\in\mathbb{C}$ and finite wavelength instability near $\pm1.25i$. }
\label{fig:Benney-Luke_Spine_b3}
\end{center}
\end{figure}

\begin{figure}
\begin{center}
\includegraphics[width=0.495\textwidth]{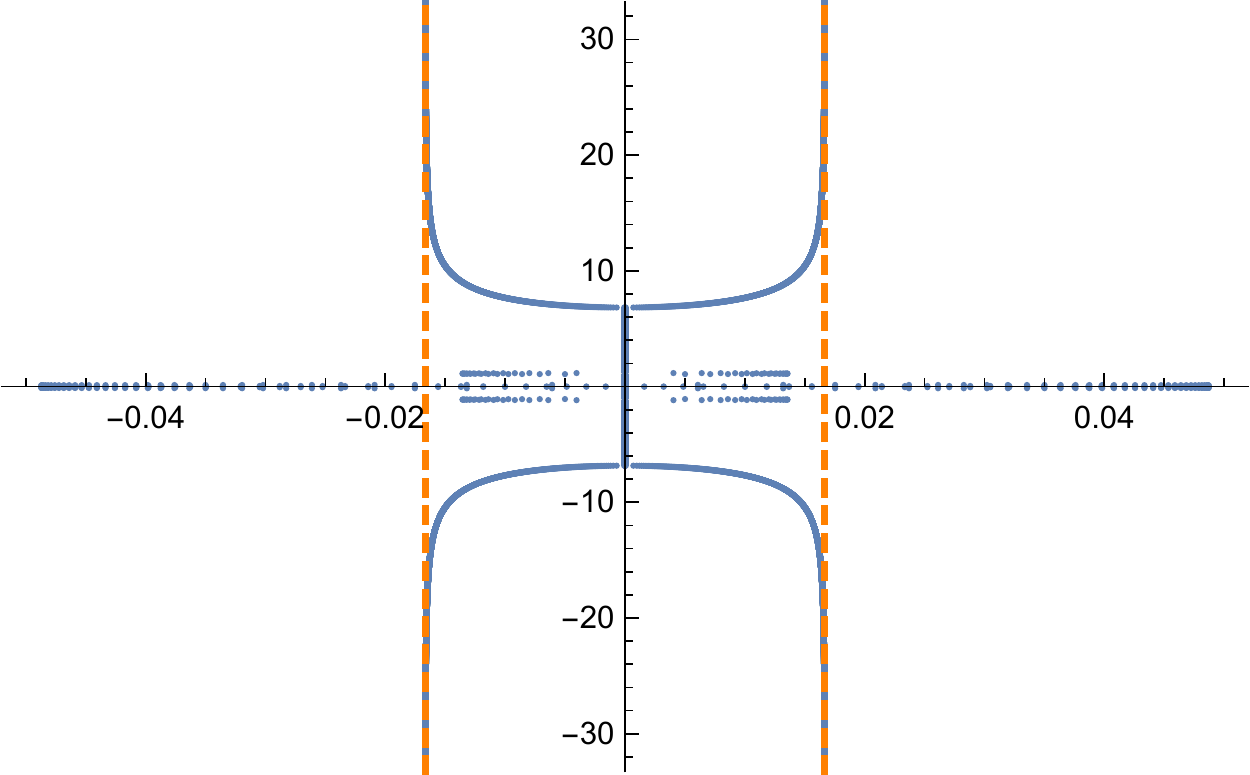}
\includegraphics[width=0.495\textwidth]{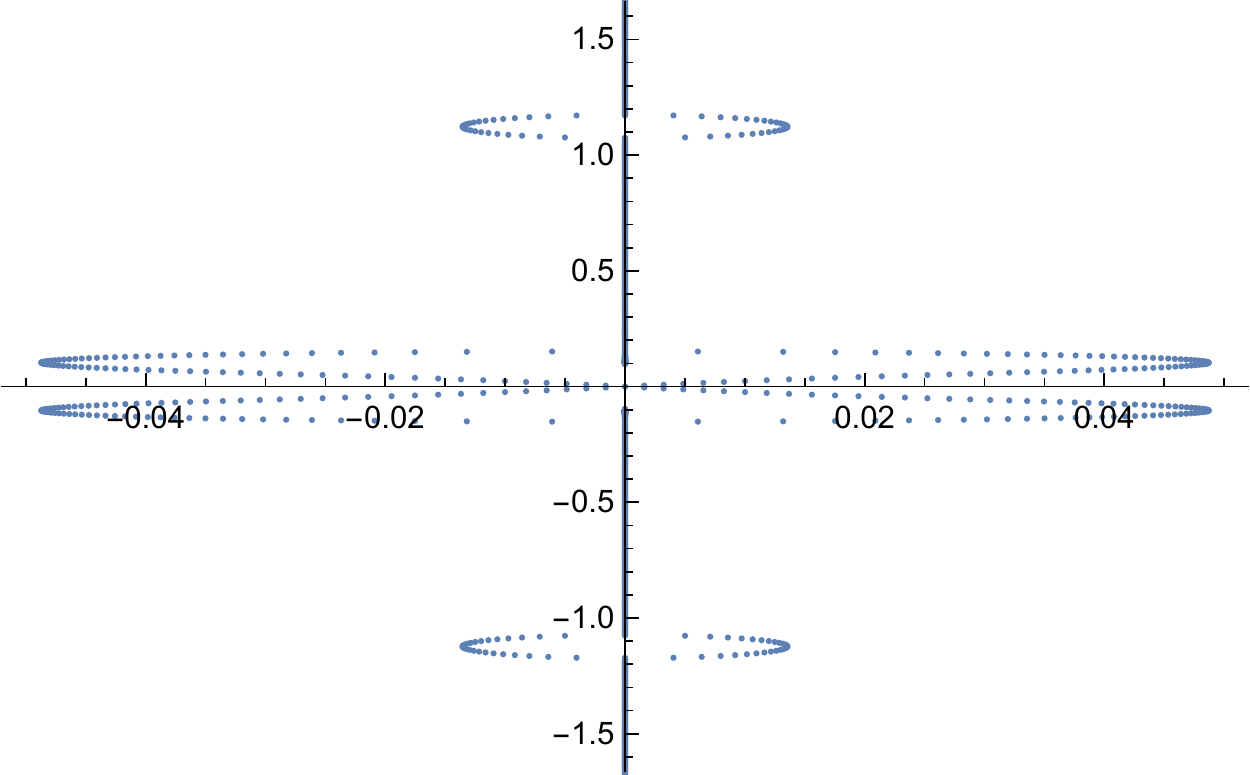}
\caption{The spectrum of \eqref{eqn3:spec} for $m=0.995$ and $a=0.618$, for which \eqref{eqn3:Lame} is in the third gap. The spectrum tends towards infinity along the dashed lines $\pm\sigma+i\mathbb{R}$, where $\sigma\approx0.016658$. On the right is a close-up for modulational instability near $0\in\mathbb{C}$ and finite wavelength instability near $\pm1.1i$.}
\label{fig:Benney-Luke_Spine_g3}
\end{center}
\end{figure}

Figures~\ref{fig:Benney-Luke_Spine_g1}, \ref{fig:Benney-Luke_Spine_g2}, \ref{fig:Benney-Luke_Spine_g3} provide examples of the numerically computed spectrum of \eqref{eqn3:spec}, for which \eqref{eqn3:Lame} lies in the $j$-th gap, $j=1$, $2$, $3$, respectively. The values of $m$ and $a$ correspond to the bullet points in the regions $g_j$ of Figure~\ref{fig:BLBand_Gap}, $j=1$, $2$, $3$. The numerical result corroborates Theorem~\ref{thm:BL} that the spectrum tends towards infinity along $\pm\sigma+i\mathbb{R}$ for some $\sigma>0$. In each of the examples, $\sigma$ agrees well with $\frac{\log(|\mu|)}{T}$, where $\mu$ is the numerically computed eigenvalue of the monodromy matrix of \eqref{eqn3:Hill} such that $|\mu|>1$. Notice modulational instability to long wavelength perturbations near $0\in\mathbb{C}$ in each example, and spectral instability to finite wavelength perturbations away from $0\in\mathbb{C}$ in Figure~\ref{fig:Benney-Luke_Spine_g3}.

Figures~\ref{fig:Benney-Luke_Spine_b2} and \ref{fig:Benney-Luke_Spine_b3} show examples of the spectrum, for which \eqref{eqn3:Lame} is in the $j$-th band, $j=2$ and $3$, respectively. Recall that there are no values of $m$ and $a$ for which \eqref{eqn3:Lame} is in the first band. The numerical result supports our analytical prediction that the spectrum tends towards infinity along the imaginary axis. Notice modulational instability in each of the examples and finite wavelength instability in Figure~\ref{fig:Benney-Luke_Spine_b3}. 

\section{The coupled Benjamin--Bona--Mahony system}\label{sec:abcd}

Last but not least, we turn to \eqref{eqn1:BCS}, where $a=c=0$ and $b=d=\frac16$, that is,
\begin{equation}\label{eqn4:BCS}
\begin{aligned}
&\eta_t+u_x-\frac{1}{6}\eta_{xxt}+(\eta u)_x=0, \\
&u_t+\eta_x-\frac{1}{6}u_{xxt}+uu_x=0,
\end{aligned}
\end{equation}
whose dispersion relation 
\[
\omega^2(k)=\frac{k^2}{(1+\frac16k^2)(1+\frac16k^2)}
\]
remains bounded for all $k\in\mathbb{R}$. We remark that $b=d=\frac16$ is for convenience and better correspondence to earlier works~\cite{BCS,BCS2}. 

\subsection{Parametrization of periodic traveling waves}\label{sec4:periodic}

A traveling wave of \eqref{eqn4:BCS} takes the form $\eta(x-ct-x_0)$ and $u(x-ct-x_0)$ for some $c\neq0,\in\mathbb{R}$ for some $x_0\in\mathbb{R}$, and they satisfy by quadrature
\begin{equation}\label{eqn4:BCS0}
\begin{aligned}
&-c\eta+u+\frac16c\eta''+u\eta=b_1, \\
&-cu+\eta+\frac16cu''+\frac12u^2=b_2
\end{aligned}
\end{equation}
for some $b_1$, $b_2\in\mathbb{R}$. Periodic solutions of \eqref{eqn4:BCS0} were treated in \cite{CCD,CCN}, among others, but for $b_1=b_2=0$, and not exhaustively, to the best of the authors' knowledge. Here we work out all $b_1$, $b_2\in\mathbb{R}$. Similarly as in Sections~\ref{sec2:periodic} and \ref{sec3:periodic}, we can mod out $x_0$. 

Eliminating $\eta$ from \eqref{eqn4:BCS0}, we arrive at
\begin{equation}\label{eqn4:u0}
c^2u''''+(u-c)(12cu''+18u^2-36cu+b_2)+6c(u')^2-36u+b_1=0.
\end{equation}
Suppose that a solution of \eqref{eqn4:u0} satisfies
\[
u''=P(u;c,b_1,b_2),\quad \text{$P$ is a polynomial of $u$},
\]
allowing for elliptic and hyperelliptic functions, whence
\begin{subequations}\label{eqn4:u'}
\begin{equation}\label{eqn4:Q}
\frac12(u')^2=Q(u;c,b_1,b_2),\quad Q'=P,
\end{equation}
and also
\[
u''''=2(P''Q+PP')(u;c,b_1,b_2). 
\]
Substituting into \eqref{eqn4:u0}, we deduce that $\deg(Q)\leq3$, so that no hyperelliptic functions. 
Assuming that $\deg(Q)=3$, after some algebra we find 
\begin{equation}\label{def4:Q}
Q(u;c,b_1,b_2)=-\frac{3}{5c}u^3+\frac95u^2-\frac{144c^2+25b_2-900}{330c}u-\frac{1008c^3-6300c+275b_1-100b_2c}{1320c}.
\end{equation}
\end{subequations}

One can work out the existence of non-constant periodic solutions of \eqref{eqn4:u'} and, hence, non-constant periodic traveling waves of \eqref{eqn4:BCS}, provided that the discriminant 
\begin{equation}\label{def4:disc}
\begin{aligned}
\disc(Q):=\frac{1}{76665c^4}(&4665600c^6+27993600c^4-363181968c^2\\
&-777600b_2c^4+43200b_2^2c^2-3110400b_2c^2+23287176b_1c \\
&-323433 b_1^2-800 b_2^3+86400 b_2^2-3110400 b_2+37324800)
\end{aligned}
\end{equation}
is positive. When $b_1=b_2=0$, \eqref{def4:disc} has simple roots at $c=\pm\frac{5}{2}$ and $\pm\frac12 \sqrt{\frac{33\sqrt{57}}{10}-\frac{49}{2}}\approx\pm 0.3219$, together with a pole at zero, so that \eqref{eqn4:u'} has periodic solutions, provided that
\[
c\in\Big(-\infty,-\frac52\Big)\bigcup\bigg(-\frac12\sqrt{\frac{33\sqrt{57}}{10}-\frac{49}{2}},0\bigg) 
\bigcup\bigg(0,\frac12\sqrt{\frac{33\sqrt{57}}{10}-\frac{49}{2}}\bigg)\bigcup\Big(\frac52,\infty\Big). 
\]
This reproduces the result of \cite{CCN}. 

More generally, \eqref{eqn4:u'} has periodic solutions, provided that $c$ is in some interval, depending on $b_1$ and $b_2$, for which \eqref{def4:disc} is positive. The number of intervals of such `admissible' wave speeds is constant in open sets in the $(b_1,b_2)$ plane and changes across the curves of co-dimension one, which can be found in closed form: 
\begin{equation}\label{def4:curve123}
\begin{aligned}
&b_1^2=\frac{800}{323433}(36-b_2)^3, \\
&b_1^2=\frac{16}{1617165}\Big(-125 b_2^3+13500 b_2^2+4365495b_2-168821820
+\sqrt{25 b_2^2-1800 b_2+35583}^3\Big), \\
&b_1^2=72b_2-2592.
\end{aligned}
\end{equation}

\begin{figure}[htbp]
\begin{center}
\includegraphics[width=0.5\textwidth]{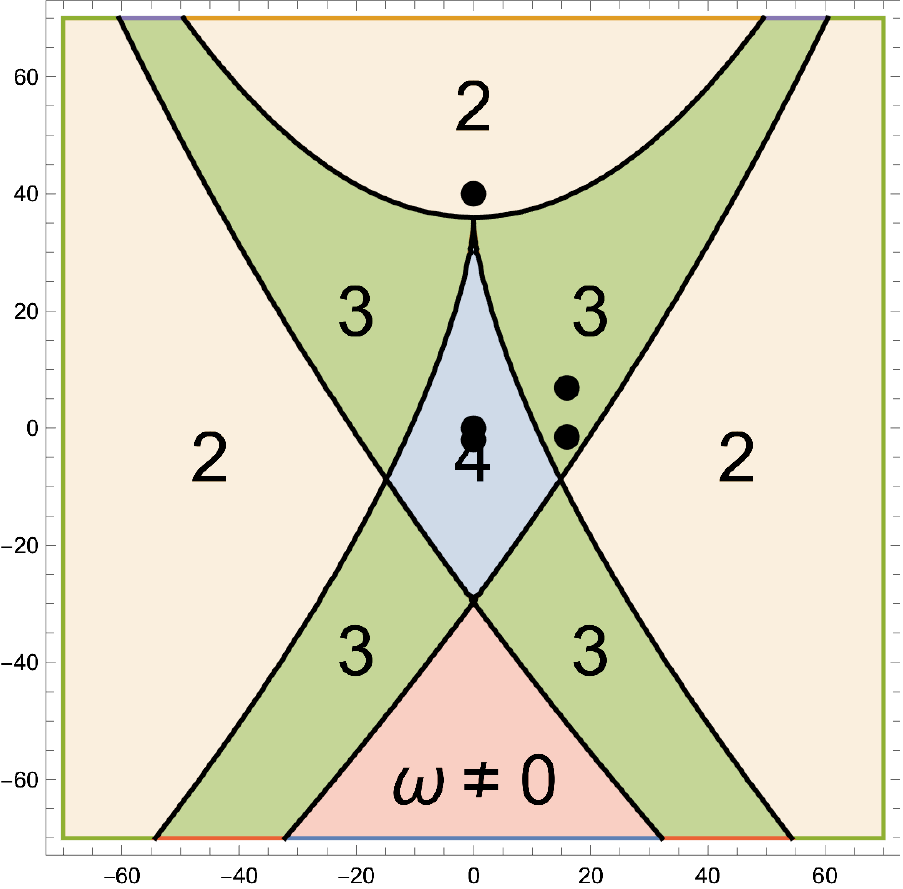}
\caption{Regions in the $(b_1,b_2)$ plane, separated by the curves in \eqref{def4:curve123}, in each of which the number of intervals of admissible wave speeds is constant and denoted by the label. In the region $c\neq0$, the intervals of admissible wave speeds are $(-\infty,0)$ and $(0,\infty)$. The bullet points correspond to the values of $b_1$ and $b_2$ for Figures \ref{fig4:R4}--\ref{fig4:R2}. }
\label{fig:PhaseDiagram}
\end{center}
\end{figure}

Figure~\ref{fig:PhaseDiagram} shows the regions in the $(b_1,b_2)$ plane, in each of which the number of intervals of admissible wave speeds is constant, denoted by the label, and whose boundaries are made up of the curves in \eqref{def4:curve123}. For any $b_1$, $b_2\in\mathbb{R}$, \eqref{def4:disc} is positive when $|c|$ becomes sufficiently large, so that \eqref{eqn4:u'} has a periodic solution, whereas no solutions when $c=0$. Therefore the number of intervals of admissible wave speeds is $\geq2$. 

For $(b_1,b_2)$ in the region~$4$ of Figure~\ref{fig:PhaseDiagram}, there are four intervals of admissible wave speeds, two where $c>0$ and two where $c<0$. They are symmetric about $c=0$ when $b_1=0$ and asymmetric otherwise. Particularly, $(b_1,b_2)=(0,0)$ is in the region~$4$, which was treated in \cite{CCN,CCD} and others. For $(b_1,b_2)$ in the region~$3$, the number of intervals of admissible wave speeds is three, and in the remaining regions, it is two. In the region $c\neq0$, where $b_1$ and $b_2$ satisfy 
\[
b_1^2<\frac{16}{1617165}\Big(-125b_2^3+13500b_2^2+4365495b_2-168821820+\sqrt{25b_2^2-1800b_2+355833}^3\Big)
\] 
and $b_2<\frac95(20-11\sqrt{11})$, the number of intervals of admissible wave speeds is two, but we single it out because the intervals are $(-\infty,0)$ and $(0,\infty)$. In other words, the only inadmissible wave speed is zero. In all the other regions, by contrast, there is a (nonempty) closed interval of inadmissible wave speeds. 

Actually, periodic solutions of \eqref{eqn4:u'} can be found in closed form in terms of the Jacobi elliptic functions. 
Recall that $y(x)=\frac{2m-1}{3}-m\cn^2(x,\sqrt{m})$ is a solution of 
\[
(y')^2=4y^3-\frac43(1-m+m^2)y-\frac{4}{27}(2-3m-3m^2+2m^3),
\]
where $m\in(0,1)$ is an elliptic operator. After appropriate changes of variables, notice that
\[
z(x)=z_0\qty(\frac{2m-1}{3}-m\cn^2(ax,\sqrt{m}))
\]
is a solution of 
\[
(z')^2=\alpha_3z^3+\alpha_1z+\alpha_0,
\]
where 
\begin{equation}\label{def4:a}
z_0=\frac{9\alpha_0(1-m+m^2)}{\alpha_1(2-3m-3m^2+2m^3)}\quad\text{and}\quad 
a=\sqrt{{\frac{9\alpha_0\alpha_3(1-m+m^2)}{4\alpha_1(2-3m-3m^2+2m^3)}}},
\end{equation}
if and only if 
\begin{equation}\label{eqn4:m}
-\frac{(2-3m-3m^2+2m^3)^2}{27(1-m+m^2)^3}=\frac{\alpha_0^2\alpha_3}{\alpha_1^3}\in\Big(-\frac{4}{27},0\Big)
\end{equation}
for some $m\in(0,1)$. The left hand side of \eqref{eqn4:m} increases monotonically from $-\frac{4}{27}$ at $m=0$ to $0$ at $m=\frac12$, and then decreases monotonically to $-\frac{4}{27}$ at $m=1$. Therefore whenever $\frac{\alpha_0^2\alpha_3}{\alpha_1^3}\in\qty(-\frac{4}{27},0)$, \eqref{eqn4:m} has exactly two solutions in the interval $(0,1)$, one less than $\frac12$ and one greater than $\frac12$. But $a$ (see \eqref{def4:a}) is real only for one of the solutions. If $\alpha_0>0$ then we must choose the solution of \eqref{eqn4:m} in the interval $(0,\frac12)$, and if $\alpha_0<0$ then $(\frac12,1)$. 

Returning to \eqref{eqn4:u'}, let $u=v+c$, and after some algebra we arrive at
\[
(v')^2=-\frac{6}{5c}v^3+\frac{90c^2-5b_2+180}{66c}v+\frac{-5b_1+180}{24c}
=:\alpha_3v^3+\alpha_1v+\alpha_0,
\]
so that whenever \eqref{eqn4:m} holds true for some $m\in(0,1)$, depending on $c$, $b_1$, $b_2$,
\[
u(x)=c+u_0\qty(\frac{2m-1}{3}-m\cn^2(ax,\sqrt{m}))
\]
gives a periodic solution of \eqref{eqn4:u'}, where ${\displaystyle u_0=\frac{9\alpha_0(1-m+m^2)}{\alpha_1(2-3m-3m^2+2m^3)} }$ and $a$ is in \eqref{def4:a}, whose period is $T=\frac{2K(\sqrt{m})}{a}$, where $K(\sqrt{m})$ is the complete elliptic integral of the first kind.

\begin{remark*}
Periodic solutions of \eqref{eqn4:u'} can also be found in terms of the Weierstrass elliptic functions as
\[
u(x)=c-\frac{10}{3}c\wp\qty(x;\frac{18c^2-b_2+36}{22c^2},\frac{-108c+3b_1}{80c^3}),
\]
depending on $c$, $b_1$, $b_2$, where $\wp(x;g_2,g_3)$ denotes the Weierstrass $\wp$-function, and $g_2$ and $g_3$ are the elliptic invariants. When $b_1=b_2=0$, this reproduces the result of \cite{CCD}. But the account of \cite{CCD}, while mathematically correct, could be  misleading. The solution of \cite{CCD} is given as
\[
u(x;c,\Lambda,\delta)=c-\frac{10}{3}\Lambda^2\delta^2c\wp\qty(\delta \Lambda x;\frac{9(c^2+2)}{11\Lambda^4\delta^4c^2},-\frac{27}{20\Lambda^6 \delta^6 c^2}) 
\]
(after correcting a very minor typo in \cite[(B.7)]{CCD}), where $\delta$ is described as free. But the Weierstrass $\wp$-function enjoys the scaling invariance
\[
\lambda^2\wp\qty(\lambda x;\frac{g_2}{\lambda^4},\frac{g_3}{\lambda^6})=\wp(x;g_2,g_3)
\]
for any $\lambda\neq0,\in\mathbb{R}$ (see \cite[pp. 439]{WW} or \cite[(1.41)]{pastras}, for instance). Therefore, the solutions of \cite{CCD} are independent of $\Lambda$ and $\delta$, and depend only on $c$ (plus $x_0$, but one can mod out $x_0$). 
\end{remark*}

\subsection{Asymptotic spectral analysis to short wavelength perturbations}\label{sec4:spec}

Let $\eta$ and $u$ denote a periodic traveling wave of \eqref{eqn4:BCS}, $c$ the wave speed, and $T$ the period, whose existence has been established in the previous subsection. Linearizing \eqref{eqn4:BCS} about $\eta$ and $u$ in the frame of reference moving at the speed $c$, and seeking a solution of the form $e^{\lambda t}\boldsymbol{\phi}(x)$, say, where $\lambda\in\mathbb{C}$, we arrive at 
\begin{equation}\label{eqn4:spec}
\lambda\boldsymbol{\phi}=
\mqty(c\partial_x+\partial_x \qty(\frac16\partial_{x}^2-1)^{-1} u &
\partial_x\qty(\frac16 \partial_{x}^2-1)^{-1}(1+\eta) \\
\partial_x\qty(\frac16 \partial_{x}^2-1)^{-1} & 
c\partial_x+\partial_x\qty(\frac16 \partial_{x}^2-1)^{-1} u)\boldsymbol{\phi}.
\end{equation}
Similarly as in Sections~\ref{sec2:spec} and \ref{sec3:spec}, $\eta$ and $u$ are spectrally stable if and only if the spectrum of \eqref{eqn4:spec} is contained in the imaginary axis, and $\lambda$ is in the spectrum if and only if \eqref{eqn4:spec} has a nontrivial bounded solution such that $\boldsymbol{\phi}(x+T)=e^{\frac{2\pi ikx}{T}}\boldsymbol{\phi}(x)$ for some $k\in\mathbb{R}$.

Similarly as in Sections~\ref{sec2:spec} and \ref{sec3:spec}, let
\[
\lambda=\lambda^{(1)}k+\lambda^{(0)}+\lambda^{(-1)}k^{-1}+\cdots\quad\text{and}\quad
\boldsymbol{\phi}(x)=e^{\frac{2\pi ikx}{T}}(\boldsymbol{\phi}^{(0)}(x)+\boldsymbol{\phi}^{(-1)}(x)k^{-1}+\boldsymbol{\phi}^{(-2)}(x)k^{-2}+\cdots)
\]
as $|k|\to\infty$ for some $\lambda^{(1)}$, $\lambda^{(0)}$, $\lambda^{(-1)}, \ldots\in\mathbb{C}$ for some $\boldsymbol{\phi}^{(0)}$, $\boldsymbol{\phi}^{(-1)}$, $\boldsymbol{\phi}^{(-2)}, \ldots\in L^\infty(\mathbb{R})\times L^\infty(\mathbb{R})$, so that  \eqref{eqn4:spec} becomes
\begin{multline*}
(\lambda^{(1)}k+\lambda^{(0)}+\lambda^{(-1)}k^{-1}+\cdots)
(\boldsymbol{\phi}^{(0)}+\boldsymbol{\phi}^{(-1)}k^{-1}+\boldsymbol{\phi}^{(-2)}k^{-2}+\cdots)\\
=:(\mathbf{L}^{(1)}k+\mathbf{L}^{(0)}+\mathbf{L}^{(-1)}k^{-1}+\cdots) (\boldsymbol{\phi}^{(0)}+\boldsymbol{\phi}^{(-1)}k^{-1}+\boldsymbol{\phi}^{(-2)}k^{-2}+\cdots)
\end{multline*}
as $|k|\to\infty$, where
\[
\mathbf{L}^{(1)}=\mqty(\frac{2\pi ic}{T} & 0 \\ 0 & \frac{2\pi ic}{T}), \qquad 
\mathbf{L}^{(0)}=\mqty(c\partial_x & 0 \\ 0 & c\partial_x)\quad\text{and}\quad 
\mathbf{L}^{(-1)}=\mqty(-\frac{3Ti}{\pi} & -\frac{3Ti}{\pi}(1+\eta) \\
-\frac{3Ti}{\pi}  & -\frac{3Ti}{\pi}u).
\]

At the order of $k$, we gather
\[
\lambda^{(1)}\boldsymbol{\phi}^{(0)}=\mathbf{L}^{(1)}\boldsymbol{\phi}^{(0)}
=\mqty(\frac{2\pi ic}{T} & 0 \\ 0 & \frac{2\pi ic}{T})\boldsymbol{\phi}^{(0)},
\]
whence 
\[
\lambda^{(1)}=\frac{2\pi ic}{T}\quad\text{and}\quad
\text{$\boldsymbol{\phi}^{(0)}$ is bounded and otherwise arbitrary}.
\]
At the order of $1$,
\[
\lambda^{(1)}\boldsymbol{\phi}^{(-1)}+\lambda^{(0)}\boldsymbol{\phi}^{(0)}
=\mathbf{L}^{(1)}\boldsymbol{\phi}^{(-1)}+\mathbf{L}^{(0)}\boldsymbol{\phi}^{(0)}.
\]
Since $\mathbf{L}^{(1)}-\lambda^{(1)}\mathbf{1}=\mathbf{0}$, where $\mathbf{1}$ denotes the identity operator, 
\[
\lambda^{(0)}\boldsymbol{\phi}^{(0)}=\mathbf{L}^{(0)}\boldsymbol{\phi}^{(0)}
=\mqty(c\partial_x & 0 \\ 0 & c\partial_x)\boldsymbol{\phi}^{(0)},
\]
whence $\boldsymbol{\phi}^{(0)}(x)=e^{c^{-1}\lambda^{(0)}x}\boldsymbol{\phi}$ for some constant $\boldsymbol{\phi}$. Seeking a bounded solution, we deduce that $\lambda^{(0)}$ is purely imaginary. On the other hand, $\lambda\sim\lambda^{(1)}k=\frac{2\pi ick}{T}$ as $|k|\to\infty$ to leading order, and $\lambda^{(0)}$ amounts to translate such---otherwise arbitrary---spectrum along the imaginary axis. Therefore, without loss of generality, 
\[
\lambda^{(0)}=0\quad\text{and}\quad
\text{$\boldsymbol{\phi}^{(0)}$ is a constant}.
\]
 
At the order of $k^{-1}$, similarly,	
\[
\lambda^{(1)}\boldsymbol{\phi}^{(-2)}+\lambda^{(0)}\boldsymbol{\phi}^{(-1)}+ \lambda^{(-1)}\boldsymbol{\phi}^{(0)}
=\mathbf{L}^{(1)}\boldsymbol{\phi}^{(-2)}+\mathbf{L}^{(0)}\boldsymbol{\phi}^{(-1)}+ \mathbf{L}^{(-1)}\boldsymbol{\phi}^{(0)}.
\]
Since $\mathbf{L}^{(1)}-\lambda^{(1)}\mathbf{1}=\mathbf{0}$ and since $\lambda^{(0)}=0$, 
\[
\mqty(c\partial_x & 0 \\ 0 & c\partial_x)\boldsymbol{\phi}^{(-1)}
=\mqty(\lambda^{(-1)}+\frac{3Ti}{\pi}u & \frac{3Ti}{\pi}(1+\eta) \\
\frac{3Ti}{\pi}  & \lambda^{(-1)}+\frac{3Ti}{\pi}u)\boldsymbol{\phi}^{(0)},
\]
which is solvable by the Fredholm alternative, provided that 
\[
\det \mqty(\lambda^{(-1)}+\frac{3Ti}{\pi}\overline{u} & \frac{3Ti}{\pi}(1+\overline{\eta}) \\
\frac{3Ti}{\pi} & \lambda^{(-1)}+\frac{3Ti}{\pi}\overline{u})=0,
\]
where
\begin{equation}\label{def4:means}
\overline{\eta}=\frac1T\int^T_0\eta(x)~\dd{x}\quad\text{and}\quad
\overline{u}=\frac1T\int^T_0u(x)~\dd{x}
\end{equation}
are the means of $\eta$ and $u$ over the period. 
Therefore
\[
\lambda^{(-1)}=-\frac{3Ti}{\pi}\overline{u}\pm\frac{3T}{\pi}\sqrt{-(1+\overline\eta)},
\]
and if $1+\overline{\eta}<0$, so that $\text{Re}(\lambda^{(-1)})\neq0$, then the spectrum of \eqref{eqn4:spec} tends to infinity along some curve whose real part is nonzero.

Surprisingly, 
\begin{equation}\label{eqn4:1+eta}
1+\overline{\eta}=\frac{4}{33}(2+c^2)+\frac{1213}{1188}b_2,
\end{equation}
not involving elliptic integrals, so that $1+\overline{\eta}<0$ when $b_2<-\frac{288}{1213}\approx-0.2374$. 
For a proof of \eqref{eqn4:1+eta}, see below. 

We summarize our conclusion.

\begin{theorem}\label{thm:BCS}
Let $\eta$ and $u$ denote a periodic traveling wave of \eqref{eqn4:BCS}, $c$ the wave speed, and $T$ the period, where $c$ is in an interval of admissible wave speeds, depending on $b_1$ and $b_2$. The spectrum of \eqref{eqn4:spec} satisfies
\begin{equation}\label{eqn4:lambda}
\lambda=\frac{2\pi ic}{T}k+\qty(-\frac{3Ti}{\pi}\overline{u}\pm\frac{3T}{\pi}
\sqrt{-\frac{4}{33}(2+c^2)-\frac{1213}{1188}b_2})k^{-1}+O(k^{-2})\quad\text{as $|k|\to\infty$},
\end{equation}
where $\overline{u}$ is in \eqref{def4:means}. Particularly, if
\begin{equation}\label{eqn4:cb2}
b_2<-\frac{144}{1213}(2+c^2)
\end{equation}
then the spectrum tends to infinity along some curve whose real part is nonzero.
\end{theorem}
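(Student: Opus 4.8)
The plan is to reduce Theorem~\ref{thm:BCS} to the single algebraic identity \eqref{eqn4:1+eta}, since the order-by-order analysis of \eqref{eqn4:spec} carried out above already supplies the expansion $\lambda=\lambda^{(1)}k+\lambda^{(0)}+\lambda^{(-1)}k^{-1}+\cdots$ with $\lambda^{(1)}=\frac{2\pi ic}{T}$, $\lambda^{(0)}=0$, and, from the Fredholm condition at the order of $k^{-1}$, $\lambda^{(-1)}=-\frac{3Ti}{\pi}\overline{u}\pm\frac{3T}{\pi}\sqrt{-(1+\overline{\eta})}$. Granting \eqref{eqn4:1+eta}, formula \eqref{eqn4:lambda} is immediate upon substituting $1+\overline{\eta}=\frac{4}{33}(2+c^2)+\frac{1213}{1188}b_2$ into this expression. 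For the instability assertion I would observe that \eqref{eqn4:cb2} is precisely the inequality $1+\overline{\eta}<0$; then $\sqrt{-(1+\overline{\eta})}$ is real and positive while $-\frac{3Ti}{\pi}\overline{u}$ is purely imaginary, so $\Re(\lambda^{(-1)})=\pm\frac{3T}{\pi}\sqrt{-(1+\overline{\eta})}\neq0$, the leading correction to the purely imaginary $\lambda^{(1)}k$ acquires nonzero real part, and the spectrum runs off to infinity along a curve whose real part is $O(k^{-1})$ but nonvanishing.

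It remains to prove \eqref{eqn4:1+eta}, and here the point is to obtain a closed form free of elliptic integrals. First I would use the second equation of \eqref{eqn4:BCS0} to write $\eta=b_2+cu-\frac16cu''-\frac12u^2$ and average over one period; since $u$ is periodic, $\overline{u''}=0$, so $\overline{\eta}=b_2+c\overline{u}-\frac12\overline{u^2}$. Inserting the shift $u=v+c$ used in the parametrization preceding \eqref{def4:a}, the first-moment terms cancel, $c\overline{u}-\frac12\overline{u^2}=\frac12c^2-\frac12\overline{v^2}$, leaving $1+\overline{\eta}=1+b_2+\frac12c^2-\frac12\overline{v^2}$. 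The elliptic-integral-valued mean $\overline{v}$ has thus dropped out entirely.

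The crux---and the main obstacle---is to evaluate $\overline{v^2}$ without elliptic integrals. For this I would use only the first-order equation $(v')^2=\alpha_3v^3+\alpha_1v+\alpha_0$: differentiating gives $v''=\frac12(3\alpha_3v^2+\alpha_1)$, and averaging over a period (again $\overline{v''}=0$) yields the purely algebraic relation $\overline{v^2}=-\frac{\alpha_1}{3\alpha_3}$. Substituting the explicit $\alpha_1$ and $\alpha_3$ then produces the coefficients $\frac{4}{33}$ and $\frac{1213}{1188}$ in \eqref{eqn4:1+eta}; in particular $b_1$ disappears of its own accord, since it enters only through $\alpha_0$, which is absent from $\overline{v^2}$. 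This is exactly why the identity is \emph{surprising}: neither $\overline{v}$ nor the period-dependent elliptic integrals survive.

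The single delicate step is the final arithmetic, where a stray factor of two in $\alpha_1$ is easy to introduce. I would therefore recompute $\alpha_1$ directly from $(v')^2=2Q(v+c)$ with $Q$ as in \eqref{def4:Q}, obtaining $\alpha_1=\frac{90c^2-5b_2+180}{33c}$; together with $\alpha_3=-\frac{6}{5c}$ this gives $\overline{v^2}=\frac{450c^2-25b_2+900}{594}$, which when inserted into $1+\overline{\eta}=1+b_2+\frac12c^2-\frac12\overline{v^2}$ reproduces \eqref{eqn4:1+eta} exactly. No analytic difficulty arises beyond this, as the expansion is formal and the identity purely algebraic.
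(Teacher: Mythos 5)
Your proposal is correct, and it reaches the key identity \eqref{eqn4:1+eta} by a route that differs in execution, though not in underlying mechanism, from the paper's. Both you and the paper reduce Theorem~\ref{thm:BCS} to \eqref{eqn4:1+eta}, the expansion $\lambda^{(1)}=\frac{2\pi ic}{T}$, $\lambda^{(0)}=0$, $\lambda^{(-1)}=-\frac{3Ti}{\pi}\overline{u}\pm\frac{3T}{\pi}\sqrt{-(1+\overline{\eta})}$ being supplied by the preceding order-by-order analysis, and both begin by averaging the second equation of \eqref{eqn4:BCS0} to get $\overline{\eta}=b_2+c\overline{u}-\frac12\overline{u^2}$. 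The paper then converts the period average into a contour integral $\oint_\varGamma\big(1+b_2-\frac12u^2+cu\big)\frac{du}{\sqrt{2Q(u)}}$ and invokes $\oint_\varGamma Q'(u)\frac{du}{\sqrt{2Q(u)}}=0$, together with the observation that $-\frac12u^2+cu$ equals $\frac{5c}{18}$ times the non-constant part of $Q'(u)$. Your argument is the real-variable incarnation of the same fact: averaging $v''=\frac12(3\alpha_3v^2+\alpha_1)$ over a period is precisely the statement $\int_0^T Q'(u(x))\,dx=0$, since $u''=Q'(u)$. What your version buys is transparency---the shift $u=v+c$ makes the cancellation of the first moment a triviality and isolates $\overline{v^2}=-\alpha_1/(3\alpha_3)$ as the only quantity needed, so no inspired matching of coefficients is required; the paper's version avoids introducing the depressed cubic at all. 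One point in your favor worth recording: your recomputed $\alpha_1=\frac{90c^2-5b_2+180}{33c}$ is indeed the value consistent with $(v')^2=2Q(v+c)$ for $Q$ as in \eqref{def4:Q}, and also with the Weierstrass form in the paper's remark (there $\alpha_1=\frac{10c}{3}g_2$ with $g_2=\frac{18c^2-b_2+36}{22c^2}$, giving denominator $33c$), whereas the $\alpha_1$ displayed in the paper carries a stray factor of $\frac12$ (denominator $66c$). The paper's own proof of \eqref{eqn4:1+eta} is immune to this slip because it works directly with \eqref{def4:Q}; your proof is immune because you recomputed---had you quoted the displayed $\alpha_1$ instead, you would have found a coefficient $\frac{41}{132}$ rather than $\frac{4}{33}$ on $c^2$ and the identity would have failed. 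Beyond this, your instability conclusion from $\Re(\lambda^{(-1)})=\pm\frac{3T}{\pi}\sqrt{-(1+\overline{\eta})}\neq0$ under \eqref{eqn4:cb2} matches the paper exactly.
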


We remark that $\overline{u}$ can be expressed in terms of elliptic integrals. But we will not give details here because it does not influence the quantitative result.

\begin{proof}[Proof of \eqref{eqn4:1+eta}]
Integrating the second equation of \eqref{eqn4:BCS0} over the period, we arrive at 
\[
\int_0^T (1+\eta(x))~\dd{x}=\int_0^T \qty(1+b_2-\frac12u^2+cu)~\dd{x}. 
\]
Recalling \eqref{eqn4:Q}, we rewrite 
\begin{equation}\label{eqn:mean1}
\int_0^T (1+\eta(x))~\dd{x}=\oint_\varGamma \qty(1+b_2-\frac12u^2+cu)~\frac{\dd{u}}{\sqrt{2 Q(u)}},
\end{equation}
where $\varGamma$ is a Jordan curve in the complex plane containing the range of $1+\eta$, an interval of $\mathbb{R}$. 
Recalling \eqref{def4:Q}, moreover, 
\begin{equation}\label{eqn:elliptident}
0=\oint_\varGamma \frac{Q'(u)~\dd{u}}{\sqrt{2 Q(u)}}=
\oint_\varGamma \qty(-\frac{9}{5c}u^2+\frac{18}{5}u-\frac{144c^2+25b_2-900}{330c})~\frac{\dd{u}}{\sqrt{2 Q(u)}}.
\end{equation}
Substituting \eqref{eqn:elliptident} into \eqref{eqn:mean1}, after some algebra we find
\[
\int_0^T (1+\eta(x))~\dd{x}
=\oint_\varGamma \qty(1+b_2+\frac{5}{18}\qty(\frac{144c^2+25 b_2-900}{330}))~\frac{\dd{u}}{\sqrt{2 Q(u)}}.
\]
On the other hand, 
\[
T=\oint_\varGamma\frac{\dd{u}}{\sqrt{2Q(u)}}.
\]
Therefore
\[
\frac{1}{T}\int_0^T (1+\eta(x))~\dd{x}=1+b_2+\frac{5}{18}\qty(\frac{144c^2+25 b_2-900}{330})
=\frac{4}{33}(2+c^2)+\frac{1213}{1188}b_2.
\]
This completes the proof.
\end{proof}

\subsection{Numerical experiments}\label{sec4:numerics}

Similarly as in Sections~\ref{sec2:numerics} and \ref{sec3:nuemrics}, we compute the spectrum of \eqref{eqn4:spec} numerically. When \eqref{eqn4:cb2} holds true, we numerically evaluate \eqref{eqn4:lambda} up to the order of $k^{-1}$. 

\begin{figure}[htbp]
\centering
\includegraphics[width=0.495\textwidth]{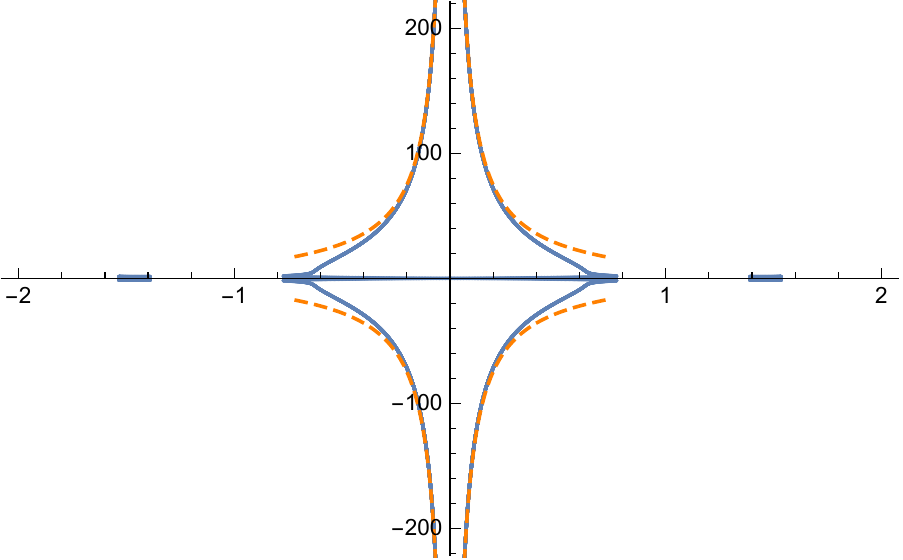}
\includegraphics[width=0.495\textwidth]{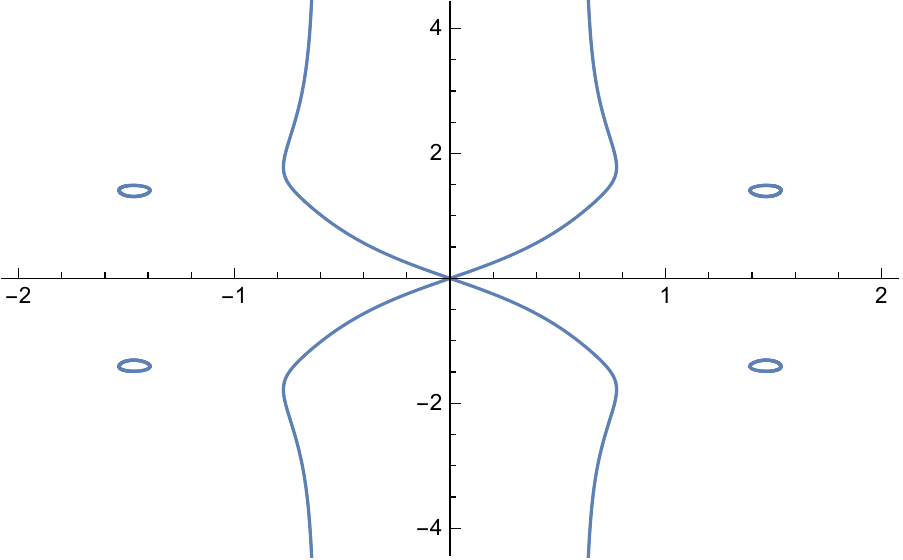}
\caption{The numerically computed spectrum of \eqref{eqn4:spec} for $c=3$, $b_1=0$, $b_2=-2$, which satisfy \eqref{eqn4:cb2}. The spectrum tends towards infinity along the dashed curves $\pm 3.56k^{-1}\pm(4.25k-18.43k^{-1})i$, $k\in\mathbb{R}$. On the right is a close-up for modulational instability near $0\in\mathbb{C}$ and, additionally, four isolated and closed loops of the spectrum centered at $\approx\pm 1.47\pm 1.39i$.}
\label{fig4:R4}
\end{figure}

Figure~\ref{fig4:R4} provides an example of the numerically computed spectrum of \eqref{eqn4:spec}. Here $b_1$ and $b_2$ correspond to a bullet point in the region $4$ of Figure~\ref{fig:PhaseDiagram}, and $c$ and $b_2$ satisfy \eqref{eqn4:cb2}. The spectrum tends towards infinity along the dashed curve, for which we evaluate \eqref{eqn4:lambda} numerically up to the order of $k^{-1}$. Therefore the numerical result corroborates Theorem~\ref{thm:BCS}. The right panel shows modulational instability near $0\in\mathbb{C}$ and, additionally, four isolated and closed loops of the spectrum whose real part is nonzero. Such closed loops of the spectrum are unusual, although isolated eigenvalues in the long wavelength limit, namely a solitary wave, can open up to a closed loop of spectrum for sufficiently large but finite wavelengths \cite{Gardner}. 

\begin{figure}[htbp]
\centering
\includegraphics[width=0.495\textwidth]{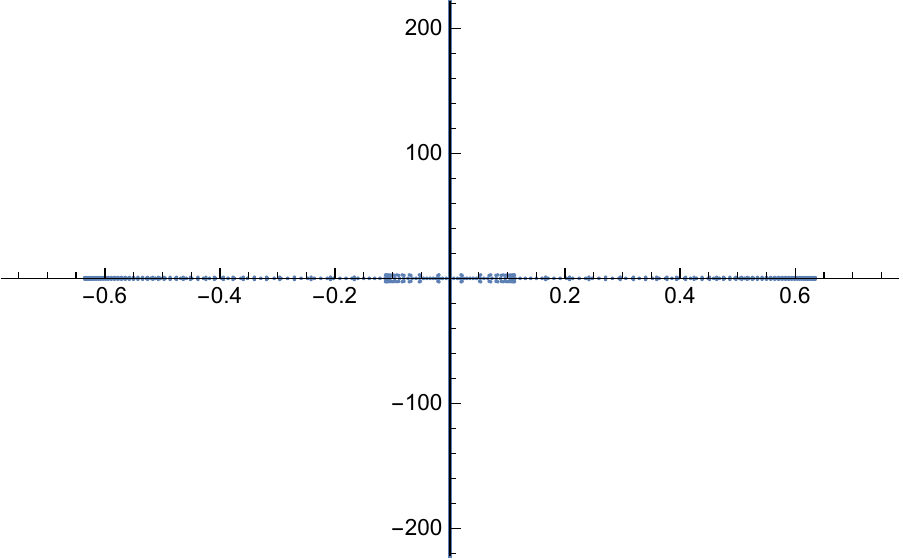}
\includegraphics[width=0.495\textwidth]{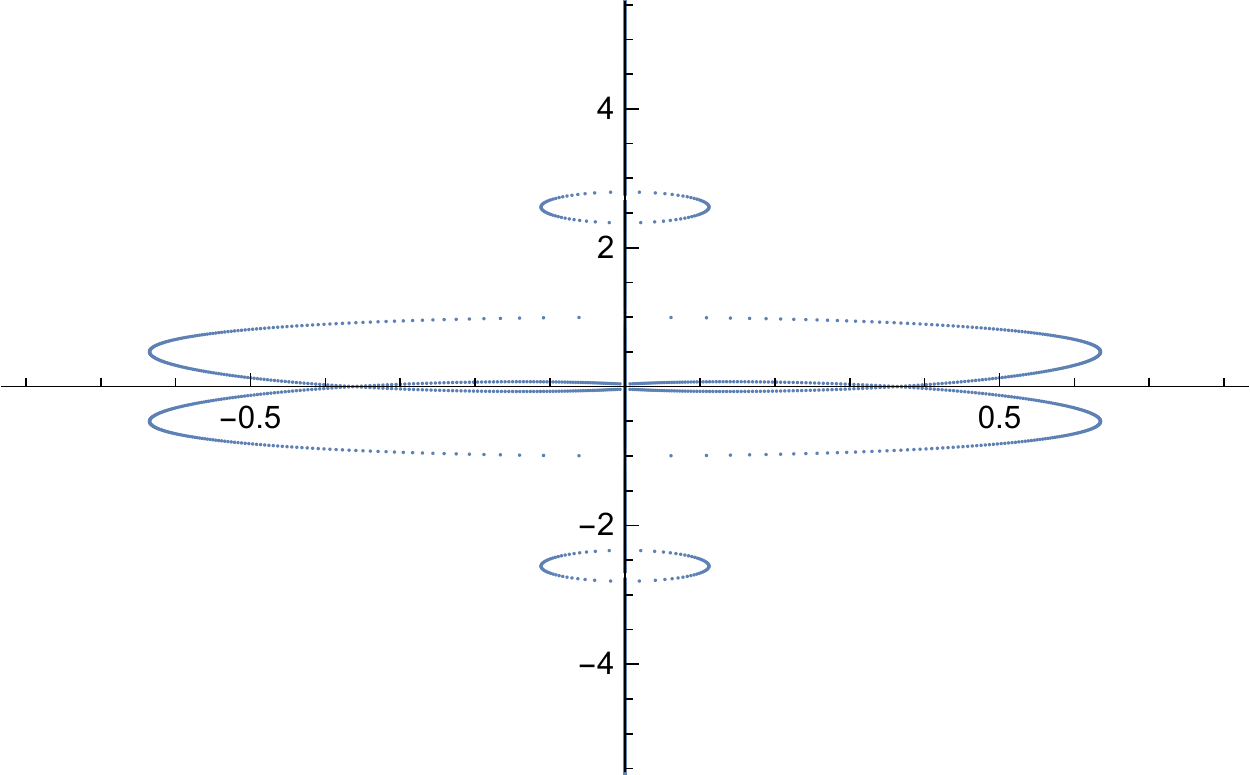}
\caption{The spectrum of \eqref{eqn4:spec} for $c=4$ and $b_1=b_2=0$, which do not satisfy \eqref{eqn4:cb2}. The spectrum lies along the imaginary axis towards infinity. On the right is a close up for modulational instability near $0\in\mathbb{C}$ and finite wavelength instability near $\pm2.6i$.}
\label{fig4:R40}
\end{figure}

Figure~\ref{fig4:R40} shows the spectrum of \eqref{eqn4:spec} for $b_1=b_2=0$ in the region $4$ of Figure~\ref{fig:PhaseDiagram}, and $c=4$ in an interval of admissible wave speeds, which do not satisfy \eqref{eqn4:cb2}. The right panel shows modulational and finite wavelength instability, but the spectrum lies along the imaginary axis otherwise, corroborating our analytical prediction. 

\begin{figure}[htbp]
\centering
\includegraphics[width=0.495\textwidth]{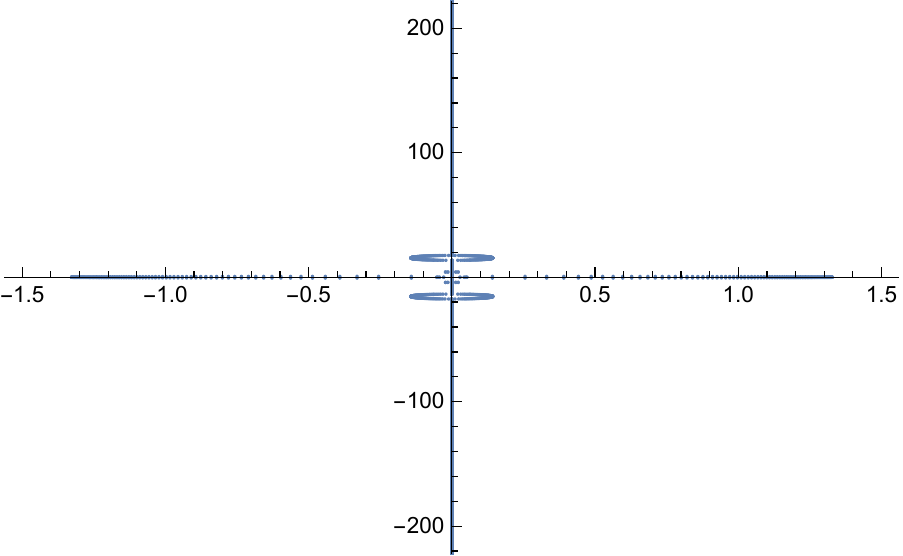}
\includegraphics[width=0.495\textwidth]{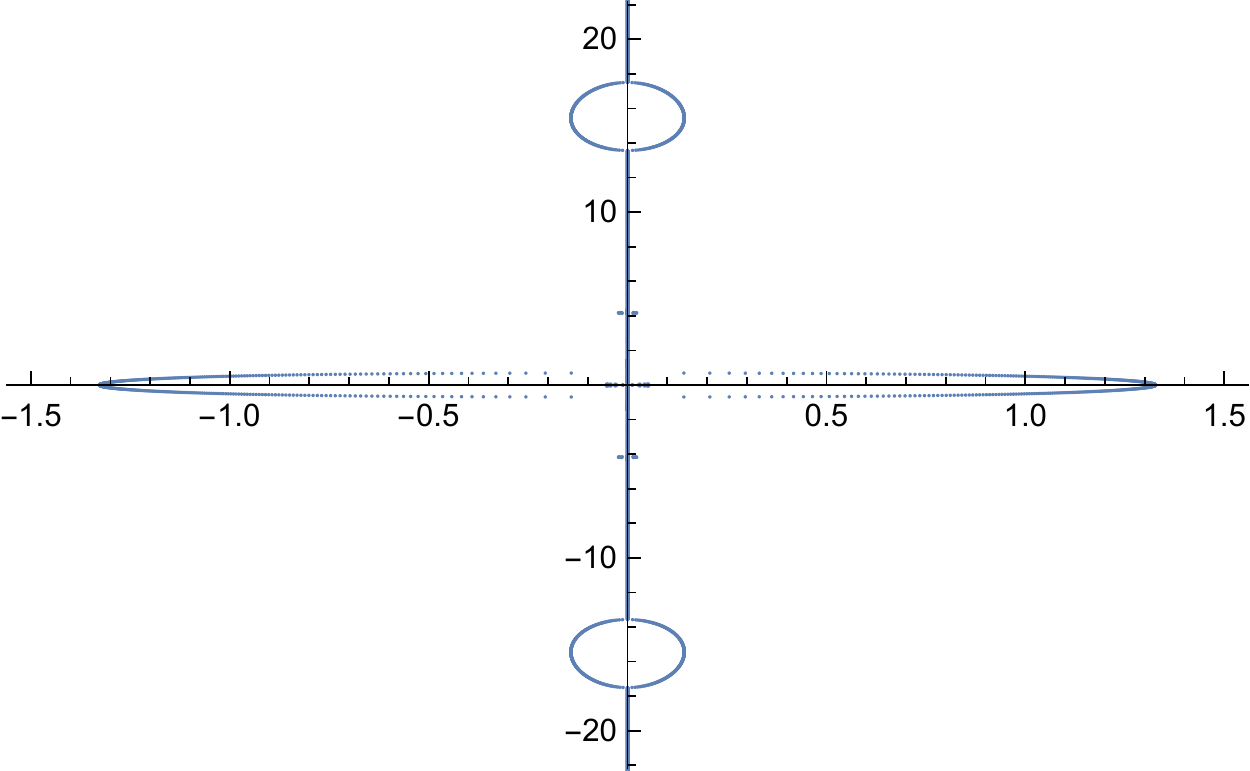}
\caption{The spectrum for \eqref{eqn4:spec} for $c=3$, $b_1=15.97$, $b_2=6.895$, which do not satisfy \eqref{eqn4:cb2}. The spectrum lies along the imaginary axis towards infinity. On the right is a close up for modulational instability near $0\in\mathbb{C}$ and, additionally, finite wavelength instability near $\pm 4.1i$ and $\pm 15.5i$, although the former is hard to distinguish.}
\label{fig4:R3}
\end{figure}

\begin{figure}[htbp]
\centering
\includegraphics[width=0.495\textwidth]{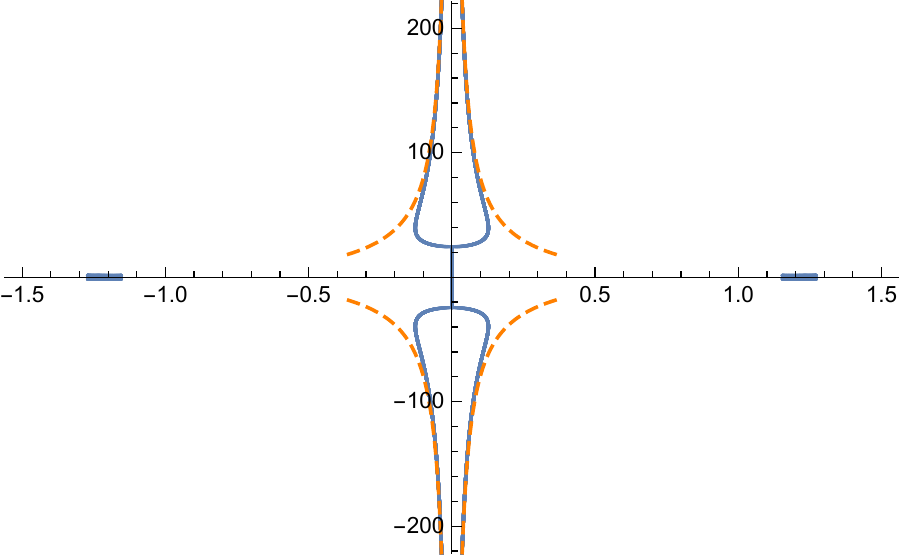}
\includegraphics[width=0.495\textwidth]{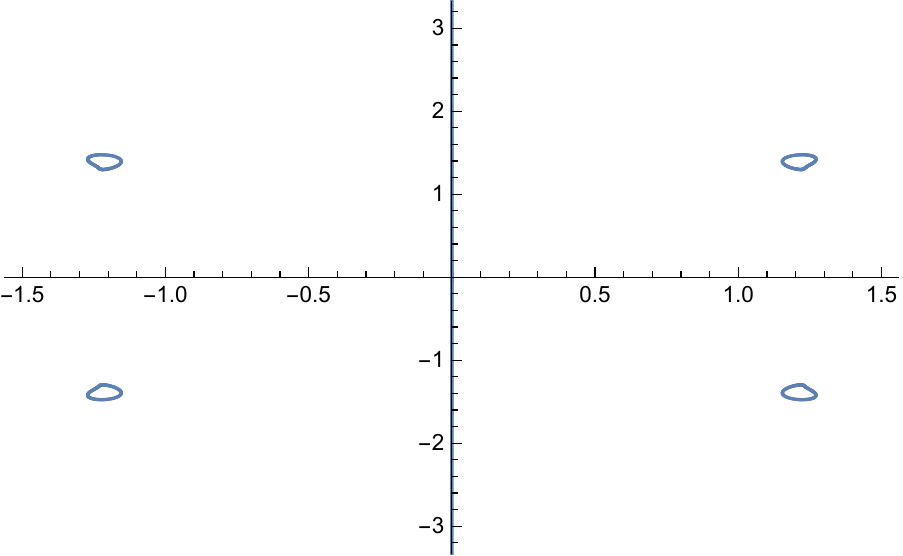}
\caption{The spectrum of \eqref{eqn4:spec} for $c=3$, $b_1=15.97$, $b_2=-1.5$, which satisfy \eqref{eqn4:cb2}. The spectrum leaves the imaginary axis at $\approx\pm24.5i$ and tends towards infinity along the dashed curves $\pm 1.80099 k^{-1}\pm(4.44989k - 18.4791 k^{-1})i$, $k\in\mathbb{R}$. On the right is a close up for {\em no} modulational instability near $0\in\mathbb{C}$, but rather four closed loops of the spectrum near $\pm 1.2 \pm 1.4 i$.}
\label{fig4:R3spine}
\end{figure}

Figures~\ref{fig4:R3} and \ref{fig4:R3spine} show two examples of the spectrum, for which $b_1$ and $b_2$ are in the region~$3$ of Figure~\ref{fig:PhaseDiagram}. In Figure~\ref{fig4:R3}, $b=15.97$ and $b_2=6.895$, for which the three intervals of admissible wave speeds are $(-\infty,-2.90)$, $(0.217,0.802)$, $(2.23,\infty)$, and we take $c=3$. Since $b_2<0$, \eqref{eqn4:cb2} does not hold true. The numerically computed spectrum lies along the imaginary axis toward infinity. The right panel shows modulational and finite wavelength instability.

In Figure~\ref{fig4:R3spine}, $b_1=15.97$ and $b_2=-1.5$, for which the three intervals of admissible wave speeds are $(-\infty,-2.77)$, $(0.123,1.03)$, $(1.95,\infty)$, and \eqref{eqn4:cb2} holds true for $|c|\lesssim 3.26$. We take $c=3$. The spectrum tends towards infinity along the dashed curve, for which we evaluate \eqref{eqn4:lambda} numerically up to the order of $k^{-1}$. There is no modulational instability near $0\in\mathbb{C}$ but four isolated and closed loops of the spectrum off the imagniary axis. 

\begin{figure}[htbp]
\centering
\includegraphics[width=0.495\textwidth]{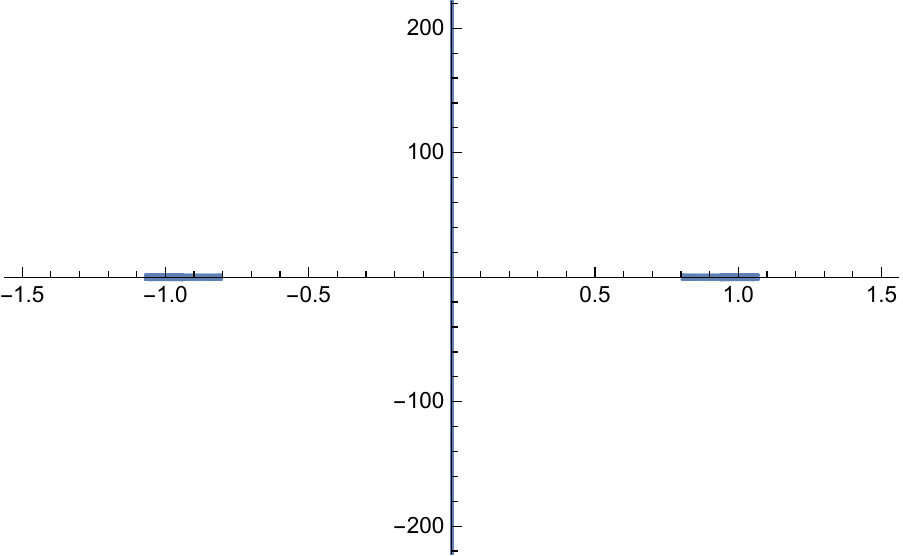}
\includegraphics[width=0.495\textwidth]{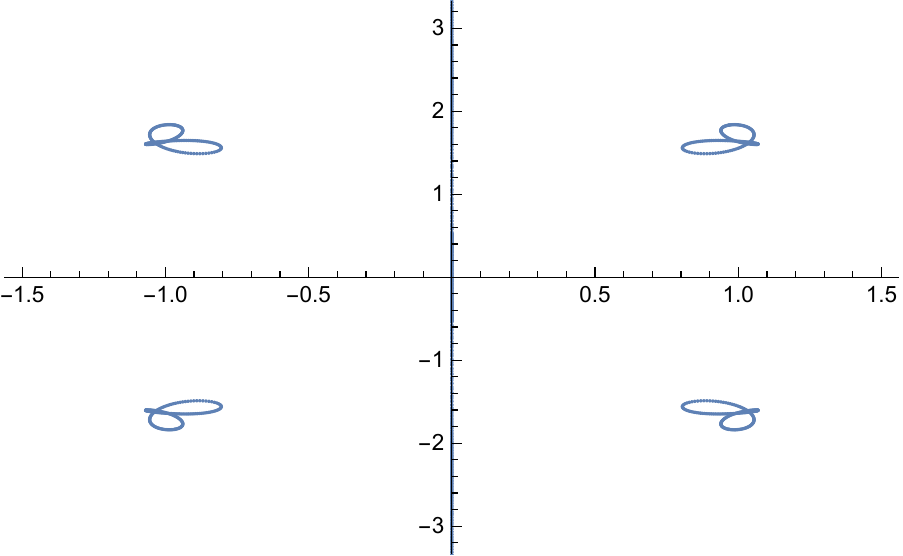}
\caption{The spectrum of \eqref{eqn4:spec} for $b_1=15.97$, $b_2=-1.5$, $c=4$, which do not satisfy \eqref{eqn4:cb2}. The spectrum lies along the imaginary axis towards infinity. On the right is a close-up for {\em no} modulational instability near $0\in\mathbb{C}$, but four isolated and closed loop of the spectrum off the imaginary axis.}
\label{fig4:R3nospine}
\end{figure}
 
Figure~\ref{fig4:R3nospine} shows the spectrum for $b_1=15.97$ and $b_2=-1.5$, the same as in Figure~\ref{fig4:R3spine}, but $c=4$, greater than Figure~\ref{fig4:R3spine}, for which \eqref{eqn4:cb2} no longer holds true. The spectrum lies along the imaginary axis far away from $0\in\mathbb{C}$. On the right is a close up for isolated and closed loops of the spectrum off the imaginary axis. There is no modulational instability.

\begin{figure}[htbp]
\begin{center}
\includegraphics[width=0.495\textwidth]{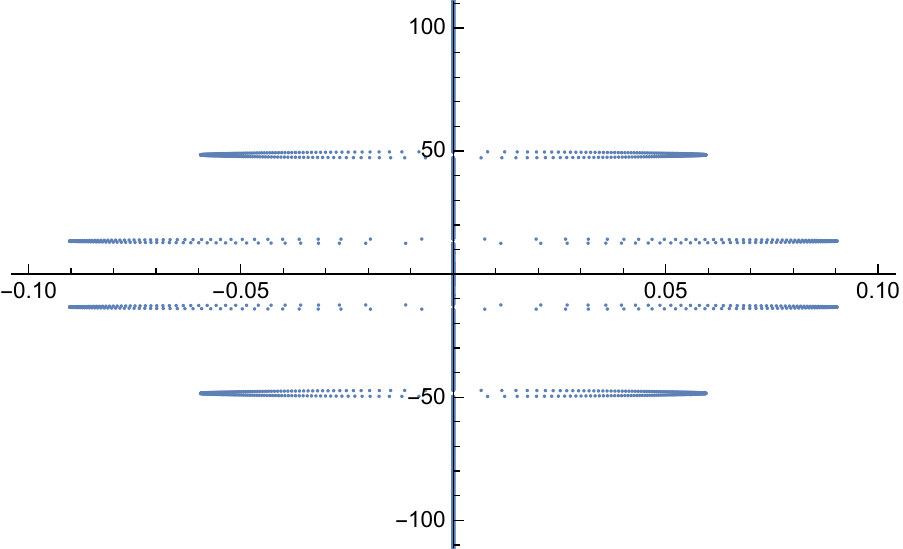}
\caption{The spectrum of \eqref{eqn4:spec} for $c=4$, $b_1=0$, $b_2=40$, which do not satisfy \eqref{eqn4:cb2}. The spectrum lies along the imaginary axis, other than finite wavelength instability near $\pm 12i$ and $\pm 50i$. Notice no modulational instability.}
\label{fig4:R2}
\end{center}
\end{figure}

Last but not least, Figure~\ref{fig4:R2} shows an example of the spectrum, for which the values of $b_1$ and $b_2$ correspond to the bullet point in the region~$2$ of Figure~\ref{fig:PhaseDiagram}, and $c$ and $b_2$ do not satisfy \eqref{eqn4:cb2}. The spectrum lies along the imaginary axis, other than finite wavelength instability. There is no modulational instability.

\bibliographystyle{amsplain}
\bibliography{reference}

\begin{appendix}

\section{The spectrum for the generalized KdV equation}\label{appn}

Let $\theta\in[0,1]$ and we consider the spectral problem
\begin{equation}\label{eqnA:spec}
\lambda\phi=\phi_{xxx}-c\phi_x+\theta(u(x)\phi)_x, \quad 
\phi\in L^2(\mathbb{R}/2\pi\mathbb{Z}),
\end{equation} 
for some smooth and $2\pi$ periodic function $u$ for some $c\neq-1,\in\mathbb{R}$. We wish to show that the spectrum of \eqref{eqnA:spec} lies in the imaginary axis outside some bounded set. Let 
\[
\phi(x)=\sum_{k\in\mathbb{Z}} \widehat{\phi}_k e^{ikx}\quad\text{and}\quad
u(x)=\sum_{k\in\mathbb{Z}} \widehat{u}_k e^{ikx},
\]
and we can reformulate \eqref{eqnA:spec} equivalently as
\begin{equation}\label{eqnA:spec(k)}
\widehat{\phi}_k+\frac{i\theta k}{\lambda+ik^3+ick}\sum_{n\in\mathbb{Z}}\widehat{u}_n\widehat{\phi}_{n-k}=0,
\quad k\in\mathbb{Z}.
\end{equation}
Let
\[
S_k=\Big\{\lambda:|\lambda+ik^3+ick|\leq k\sum_{n\in\mathbb{Z}}|\widehat{u}_n|\Big\}
\quad\text{and}\quad S=\bigcup_{k\in\mathbb{Z}}S_k. 
\]
We pause to remark that $\sum_{n\in\mathbb{Z}}|\widehat{u}_n|<+\infty$ because $u$ is smooth, whence $|\widehat{u}_n|\to0$ as $|n|\to\infty$ more rapidly than polynomially. Observe that:
\begin{itemize}
\item The spectrum of \eqref{eqnA:spec} is contained in $S$ because if $\lambda\notin S$ then \eqref{eqnA:spec(k)} is invertible of $\widehat{\phi}_k$ for any $k\in\mathbb{Z}$ for any $\theta\in[0,1]$; 
\item $S_k$ are disjoint when $|k|\gg1$ because $|\omega(k+1)-\omega(k)|\to\infty$ as $|k|\to\infty$, where $\omega(k)=-k^3-ck$ is the dispersion relation of \eqref{eqnA:spec}; For a regularized long-wave model, such as \eqref{eqn1:rBou}, on the other hand, $|\omega(k+1)-\omega(k)|$ remains bounded for all $k\in\mathbb{R}$;
\item $\lambda$ is continuous in $\theta\in[0,1]$ and there is one in each $S_k$ when $\theta=0$, whereby there is exactly one in each $S_k$ when $\theta=1$.
\end{itemize}

To recapitulate, there are countably many $S_k$ outside of a bounded set, each of which contains one eigenvalue of \eqref{eqnA:spec}. Since \eqref{eqnA:spec} remains invariant under 
\[
\lambda\mapsto \lambda^*\quad\text{and}\quad \phi\mapsto \phi^*
\]
where the asterisk denotes complex conjugation, and under
\[
\lambda\mapsto -\lambda\quad\text{and}\quad x\mapsto -x,
\]
if there is one eigenvalue in each $S_k$ then such an eigenvalue must lie in the imaginary axis. Therefore only finitely many eigenvalues can fall off the imaginary axis. The same will hold true for the quasi-periodic boundary condition.   

\end{appendix}

\end{document}